\DeclareMathOperator*{\argmin}{arg\,min}
\DeclareMathOperator*{\spn}{span}
\DeclareMathOperator{\intrr}{\mathrm{int}}
\newcommand{\be}{\begin{equation}}
\newcommand{\ee}{\end{equation}}
\newcommand{\la}{\left\langle  }
\newcommand{\ra}{\right\rangle  }
\newcommand{\inv}{^{\raisebox{.2ex}{$\scriptscriptstyle-1$}}}
\newcommand{\mcb}[1]{\left\{#1\right\}}
\newcommand{\gbarv}{\bar{g}_{v}}
\newcommand{\mang}[1]{\left\langle  #1\right\rangle}
\newcommand{\mbk}[1]{\left(#1\right)}
\newcommand{\func}[1]{\mathrm {#1}\,}
\newcommand{\dom}{\func{dom}}
\newcommand{\epi}{\func{epi}}
\newcommand{\epri}{\epsilon\textrm{-}\func{ri}}
\renewcommand{\int}{\mathrm{int}}
\newcommand{\fa}{\forall \,}
\providecommand{\normt}[1]{\lVert#1\rVert}
\newtheorem{asump}{Assumption}{\bf}{\it}
\theoremstyle{plain}
\newtheorem{theorem}{Theorem}
\newtheorem{lemma}{Lemma}
\newtheorem{proposition}{Proposition}
\theoremstyle{remark}
\newtheorem*{remark}{Remark}
\theoremstyle{definition}
\newtheorem{definition}{Definition}[section]
\newtheorem{corollary}{Corollary}
\begin{document}
\title{The U-Lagrangian of a prox-regular function}
\author{Shuai Liu\thanks{Email: \texttt{liushuai04235@gmail.com} Research supported by the Australian Research Council under Discovery Grant DP12100567.}}
\author{Andrew Eberhard\thanks{Email: \texttt{andy.eb@rmit.edu.au} Research supported by the Australian Research Council under Discovery Grant DP12100567.}}
\author{Yousong Luo\thanks{Email: \texttt{yluo@rmit.edu.au}}}
\affil{School of Mathematical and Geospatial Sciences, \\RMIT University, Melbourne, VIC 3001, Australia}
\date{}
\maketitle
\begin{abstract}
When restricted to a subspace, a nonsmooth function can be differentiable. It is known that for a nonsmooth convex function f and a point x, the Euclidean space can be decomposed into two subspaces: U, over which a special Lagrangian can be defined and has nice smooth properties and V, the orthogonal complement subspace of U. In this paper we generalize the definition of UV-decomposition and U-Lagrangian to the context of nonconvex functions, specifically that of a prox-regular function.
\end{abstract}
\textbf{Keywords} {UV-decomposition, \and U-Lagrangian, \and Prox-regular functions, \and Fast track, \and partly smooth}
\section{Introduction}
When studying the second order derivative of a nonsmooth function \(f\), one major difficulty is that the first-order approximation is not linear. 
The study of U-Lagrangian and UV-decomposition tries to overcome this difficulty by restricting the function to a subspace \(\mathcal{U}\) over which the function is actually differentiable. 
Hence the second-order expansion of \(f\) only needs to be defined along directions in \(\mathcal{U}\). The authors of \cite{lemarechal2000U-Lagrangian} developed the UV-decomposition and U-Lagrangian for a convex function; for instance \cite{MR2515798} studies
the minimax case. 
They showed that the U-Lagrangian is differentiable and a second-order expansion of \(f\) along directions in \(\mathcal{U}\) exists provided that the Hessian of the U-Lagrangian exists. 
The UV theory has been applied to the development of more efficient numerical algorithms such as in \cite{miller2005newton} and \cite{mifflin2005algorithm}, where approximated Newton steps in the \(\mathcal{U}\) space are made to help achieve superlinear convergence. 
Moreover, the objects associated with \(UV\)-decomposition can be easily approximated for functions with special structures such as the composition of a positively homogeneous convex function and a smooth mapping \cite{sagastizabal2013composite} and finite max functions \cite{hare2014numerical}. 
A subsmooth structure that is closely related to U-Lagrangian is \emph{fast track}\cite{mifflin2002proximal}. 
Roughly speaking, a fast track is a trajectory on which a certain second-order expansion of the underlying function can be obtained. 
Another related notion is partial smoothness defined in \cite{Lewis2002active} and it means over a smooth manifold the underlying function is smooth, regular, and has continuous first order derivative mapping. 
In \cite{hare2006functions} it is proved that fast track and partial smoothness are equivalent concepts under convexity. 
While most of the applications of \(UV\) theory are for solving convex optimization problems, theories in the nonconvex context have also been explored \cite{Mifflin2004UV,mifflin2003primal}. The quadratic sub-Lagrangian (QSL) \cite{hare2001quadratic} extends the U-Lagrangian to a type of nonconvex functions (called prox-regular functions) by adding a quadratic term to the infimand of the original U-Lagrangian. 
However, a strong quadratic growth condition is needed for QSL. 
In this paper we generalize the \(\mathcal{U}\)-Lagrangian to 
prox-regular functions from a different aspect. Instead of adding a quadratic term, we define the \(\mathcal{U}\)-Lagrangian "locally" because the prox-regularity is a local property. With this definition, no quadratic growth condition is needed. In addition, we can show that under the new U-Lagrangian, fast track and partial smoothness are almost equivalent for prox-regular functions.  

In this paper we use the following notations. The projection mapping onto a set $S$ is $P_S(x)$. The limiting normal of a set $C$ is $N_C$. The limiting subdifferential of a function $f$ is $\partial f(x)$. The indicator function of set $C$ is $\delta_C(x)$.
Denote $\bar{R}$ the extended real numbers. A set valued mapping \(S\) of two sets \(X\) and \(U\) is denoted by \(S\colon X\rightrightarrows U\). The set of all positive real numbers is \(\mathbb{R}_+\).
The closed ball in \(\mathbb{R}^n\) centered at \(\bar{x}\) with radius \(\epsilon\) is \(B(\bar{x},\epsilon)\). If \(E\) is a subspace of \(\mathbb{R}^n\) then \(B_E(w,\epsilon)\) is a closed ball in \(E\) centered at \(w\in E\), i.e. $B_{E}(w,\epsilon)\mathrel{\mathop:}=\{v\in E\colon\normt{v-w}\leq \epsilon\}$.
\section{Preliminaries}
In this section we provide some fundamental tools used in variational analysis and nonsmooth optimization. 
\begin{definition}
A set $C$ is called prox-regular at $\bar{x}$ for $\bar{w}$, where $\bar{x}\in C$ and $\bar{w}\in N_C(\bar{x})$, if $\delta_C$ is prox-regular at $\bar{x}$ for $\bar{w}$. 
It is called prox-regular at \(\bar{x}\) when this is true for all \(\bar{w}\in N_C(\bar{x})\).
\end{definition}
\begin{proposition}[13.31 of \cite{Rockafellar1998}]\label{7prop:0}
For a set \(C\subset\mathbb{R}^n\) and a point \(\bar{x}\in \mathbb{R}^n\) the prox-regularity of \(C\) at \(\bar{x}\) for \(\bar{w}\) can be equivalently characterized by the following statement:\\
$C$ is locally closed at $\bar{x}$ with $\bar{w}\in N_C(\bar{x})$ and there exist \(\epsilon>0\) and \(\rho\geq 0\) such that 
\begin{equation}\label{7med:3}
\mang{w,x'-x}\leq\frac{1}{2}\rho\normt{x'-x}^2,\ \forall \,   x'\in C\cap B(\bar{x},\epsilon)
\end{equation}    
whenever 
\begin{equation}\label{7med:4}
x\in \intrr B\mbk{\bar{x},\epsilon}\text{ and } w\in N_C(x)\cap \intrr B\mbk{\bar{w},\epsilon}. 
\end{equation}
\end{proposition}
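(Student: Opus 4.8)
The plan is to obtain the stated characterization directly from the definition of prox-regularity of a \emph{function}, applied to the indicator $f=\delta_C$, and then simplify using three elementary facts: $\delta_C$ is locally lower semicontinuous at $\bar{x}$ exactly when $C$ is locally closed at $\bar{x}$; $\partial\delta_C(x)=N_C(x)$ when $x\in C$, while $N_C(x)=\emptyset$ when $x\notin C$; and $\delta_C(\bar{x})=0$. Recall (see \cite{Rockafellar1998}) that $f$ is prox-regular at $\bar{x}$ for $\bar{v}\in\partial f(\bar{x})$ if $f$ is locally lower semicontinuous at $\bar{x}$ and there exist $\epsilon>0$ and $\rho\geq 0$ such that
\[
f(x')\geq f(x)+\langle v,x'-x\rangle-\tfrac{1}{2}\rho\normt{x'-x}^2
\]
whenever $\normt{x'-\bar{x}}<\epsilon$, $\normt{x-\bar{x}}<\epsilon$, $f(x)<f(\bar{x})+\epsilon$, $v\in\partial f(x)$, and $\normt{v-\bar{v}}<\epsilon$. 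The whole proof is a back-and-forth translation between this inequality and \eqref{7med:3}--\eqref{7med:4}.

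For the implication ``$C$ prox-regular at $\bar{x}$ for $\bar{w}$'' $\Rightarrow$ \eqref{7med:3}--\eqref{7med:4}, I would take the $\epsilon,\rho$ furnished by prox-regularity of $\delta_C$ at $\bar{x}$ for $\bar{w}$. Local lower semicontinuity of $\delta_C$ gives local closedness of $C$, and $\bar{w}\in\partial\delta_C(\bar{x})=N_C(\bar{x})$. In the quantifier, $\delta_C(x)<\delta_C(\bar{x})+\epsilon=\epsilon$ forces $\delta_C(x)=0$, i.e. $x\in C$, so $v\in\partial\delta_C(x)=N_C(x)$; and the defining inequality is vacuous when $x'\notin C$ (its left side is $+\infty$) while for $x'\in C$ it reads $0\geq\langle v,x'-x\rangle-\tfrac{1}{2}\rho\normt{x'-x}^2$. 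Shrinking $\epsilon$ to any $\epsilon'\in(0,\epsilon)$ ensures $C\cap B(\bar{x},\epsilon')\subset\{x':\normt{x'-\bar{x}}<\epsilon\}$ and $\intrr B(\bar{x},\epsilon')\subset\{x:\normt{x-\bar{x}}<\epsilon\}$, which is exactly \eqref{7med:3}--\eqref{7med:4} with $\epsilon'$ in place of $\epsilon$.

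The converse is the same computation reversed. Given local closedness of $C$ at $\bar{x}$, $\bar{w}\in N_C(\bar{x})$, and $\epsilon,\rho$ as in \eqref{7med:3}--\eqref{7med:4}, I would verify prox-regularity of $\delta_C$ at $\bar{x}$ for $\bar{w}$ with the \emph{same} $\epsilon,\rho$: local closedness gives local lower semicontinuity of $\delta_C$ and $\bar{w}\in\partial\delta_C(\bar{x})$; for $x$ with $\delta_C(x)<\epsilon$ we get $x\in C$, hence $\partial\delta_C(x)=N_C(x)$, so any admissible pair $(x,v)$ meets \eqref{7med:4}; and for $x'$ with $\normt{x'-\bar{x}}<\epsilon$ the target inequality is trivial if $x'\notin C$, while if $x'\in C$ then $x'\in C\cap B(\bar{x},\epsilon)$ and \eqref{7med:3} gives $\langle v,x'-x\rangle\leq\tfrac{1}{2}\rho\normt{x'-x}^2$, i.e. $\delta_C(x')=0\geq\delta_C(x)+\langle v,x'-x\rangle-\tfrac{1}{2}\rho\normt{x'-x}^2$.

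No step here is deep; the points that need care rather than cleverness are the handling of the extended-real value $\delta_C(x')=+\infty$ (which makes the prox-inequality automatic off $C$), the collapse of the $f$-attentive localization $\delta_C(x)<\delta_C(\bar{x})+\epsilon$ to the membership $x\in C$, and the convention $N_C(x)=\emptyset$ for $x\notin C$, which is what lets the single condition $w\in N_C(x)$ in \eqref{7med:4} silently carry the information $x\in C$ without it being stated separately, together with the minor open-versus-closed-ball bookkeeping disposed of by shrinking $\epsilon$.
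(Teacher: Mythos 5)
Your proof is correct, and it is essentially the argument behind the paper's treatment: the paper gives no proof of this proposition at all, simply quoting it as 13.31 of \cite{Rockafellar1998}, and that exercise/example in the reference is obtained exactly as you do it, by specializing the definition of prox-regularity of a function to $f=\delta_C$, using $\partial\delta_C(x)=N_C(x)$, collapsing the attentive condition $\delta_C(x)<\delta_C(\bar{x})+\epsilon$ to $x\in C$, and handling the off-$C$ case and the open/closed ball bookkeeping as you indicate. No gap to report.
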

\begin{proposition}\label{7prop:1}
With \(\mathbb{R}^n\) expressed as \(\mathbb{R}^{n_1}\times\mathbb{R}^{n_2}\), write \(x\in\mathbb{R}^n\) as \(\mbk{x_1,x_2}\) with components \(x_i\in\mathbb{R}^{n_i}\). 
Suppose \(C=D\times E\) for closed sets \(D\in \mathbb{R}^{n_1}\) and \(E\in \mathbb{R}^{n_2}\). 
If $C$ is prox-regular at $(\bar{x},\bar{y})$ for $(\bar{w},\bar{z})$ with respect to $\epsilon$ and $\rho$, 
 then $D$ is prox-regular at $\bar{x}$ for $\bar{w}$ with respect to $\epsilon$ and $\rho$.
 \end{proposition}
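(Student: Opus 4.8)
The plan is to apply the characterization of prox-regularity from Proposition~\ref{7prop:0} together with the product rule for limiting normal cones, $N_{D\times E}(x_1,x_2)=N_D(x_1)\times N_E(x_2)$ for closed sets $D,E$ (a standard fact; see \cite{Rockafellar1998}), and to reduce the defining inequality for $D$ to the one already assumed for $C=D\times E$ by restricting attention to the slice $\{x_2=\bar{y}\}$.

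First I would dispose of the easy requirements: since $D$ is closed it is in particular locally closed at $\bar{x}$, and from $(\bar{w},\bar{z})\in N_C(\bar{x},\bar{y})=N_D(\bar{x})\times N_E(\bar{y})$ we immediately read off $\bar{w}\in N_D(\bar{x})$ (and $\bar{z}\in N_E(\bar{y})$, which is reused below). For the main inequality \eqref{7med:3}, fix $x_1\in\intrr B(\bar{x},\epsilon)$, $w\in N_D(x_1)\cap\intrr B(\bar{w},\epsilon)$ and $x_1'\in D\cap B(\bar{x},\epsilon)$, and lift these data to $C$ by giving every point the second coordinate $\bar{y}$ (resp.\ $\bar{z}$ for the normal), i.e.\ consider $(x_1,\bar{y})$, $(w,\bar{z})$ and $(x_1',\bar{y})$. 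Adjoining a common coordinate leaves the relevant norms unchanged, so $(x_1,\bar{y})\in\intrr B((\bar{x},\bar{y}),\epsilon)$, $(w,\bar{z})\in\intrr B((\bar{w},\bar{z}),\epsilon)$ and $(x_1',\bar{y})\in C\cap B((\bar{x},\bar{y}),\epsilon)$; moreover $(w,\bar{z})\in N_D(x_1)\times N_E(\bar{y})=N_C(x_1,\bar{y})$ because $\bar{z}\in N_E(\bar{y})$. Proposition~\ref{7prop:0} applied to $C$ then gives
\[
\mang{(w,\bar{z}),(x_1',\bar{y})-(x_1,\bar{y})}\leq\tfrac{1}{2}\rho\,\normt{(x_1',\bar{y})-(x_1,\bar{y})}^2,
\]
and since the $\bar{y}$-components cancel in both terms, the left-hand side equals $\mang{w,x_1'-x_1}$ and the right-hand side equals $\tfrac{1}{2}\rho\normt{x_1'-x_1}^2$, which is exactly \eqref{7med:3} for $D$ with the same $\epsilon$ and $\rho$.

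I do not anticipate a genuine obstacle: the argument is a direct unpacking of Proposition~\ref{7prop:0} once the normal-cone product formula is in hand. The only points needing (routine) care are verifying that passing to the slice $x_2=\bar{y}$ keeps every point inside the prescribed balls and that it identifies $N_D(x_1)$ with the "$D$-part" of $N_C(x_1,\bar{y})$; both are immediate from the definitions. The same reasoning applied symmetrically would show that $E$ is prox-regular at $\bar{y}$ for $\bar{z}$ with the same constants, although this is not required for the stated proposition.
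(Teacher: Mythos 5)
Your proposal is correct and follows essentially the same route as the paper: both verify the characterization in Proposition~\ref{7prop:0} for $D$ by fixing the second coordinate at $\bar{y}$ (and the normal component at $\bar{z}$) and invoking the product formula $N_{D\times E}(x_1,x_2)=N_D(x_1)\times N_E(x_2)$ so that the norms and inner products reduce to the first components. Your write-up is in fact slightly more careful than the paper's in spelling out that $\bar{z}\in N_E(\bar{y})$ is what legitimizes the lift $(w,\bar{z})\in N_C(x_1,\bar{y})$, but this is the same argument.
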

 \begin{proof}
 From 13.31 of \cite{Rockafellar1998} we know that if $C$ is prox-regular at $(\bar{x},\bar{y})$ for $(\bar{w},\bar{z})$ with respect to $\epsilon$ and $\rho$, then $C$ is locally closed at $(\bar{x},\bar{y})$ with $(\bar{w},\bar{z})\in N_C(\bar{x},\bar{y})$ and 
 \begin{equation}\label{7med:3}
 \mang{(w,z),(x',y')-(x,y)}\leq\frac{1}{2}\rho\normt{(x',y')-(x,y)}^2,\ \forall \,  (x',y')\in C\cap B((\bar{x},\bar{y}),\epsilon)
 \end{equation}    
 whenever 
 \begin{equation}\label{7med:4}
 (w,z)\in N_C(x,y),\ \normt{(w,z)-(\bar{w},\bar{z})}<\epsilon\text{ and }\normt{(x,y)-(\bar{x},\bar{y})}<\epsilon.
 \end{equation}
 We now show $D$ is prox-regular at $(\bar{x},\bar{y})$ for $(\bar{w},\bar{z})$ with respect to $\epsilon$ and $\rho$ by verifying 13.31 of \cite{Rockafellar1998}. 
 Obviously $D$ is locally closed at $\bar{x}$ because $D$ is a closed set and $C$ is locally closed at $(\bar{x},\bar{y})$. 
 First, by 6.41 of \cite{Rockafellar1998} we have $\bar{w}\in N_D(\bar{x})$ and $w\in N_D(x)$. In \eqref{7med:3} and \eqref{7med:4} we can take $y'=y=\bar{y}$ and $z=\bar{z}$ to obtain
 \[
 \mang{(w,\bar{z}),(x',\bar{y})-(x,\bar{y})}\leq\frac{1}{2}\rho\normt{(x',\bar{y})-(x,\bar{y})}^2,\ \forall \,  (x',\bar{y})\in C\cap B((\bar{x},\bar{y}),\epsilon)
 \]
 whenever
 \[
 (w,\bar{z})\in N_C(x,\bar{y}),\ \normt{(w,\bar{z})-(\bar{w},\bar{z})}<\epsilon\text{ and }\normt{(x,\bar{y})-(\bar{x},\bar{y})}<\epsilon.
 \]
 This verifies that there exist $\epsilon>0$ and $\rho\geq 0$ such that $\mang{w,x'-x}\leq \frac{1}{2}\rho\normt{x'-x}^2$ for all $x'\in D
 \cap B(\bar{x},\epsilon)$ when $w\in N_C(x)$, $\normt{w-\bar{w}}<\epsilon$ and $\normt{x-\bar{x}}<\epsilon$.  
 \end{proof}
 \begin{lemma}\label{7lem:0}
Given \(\bar{y}\in\mathbb{R}^n\) and \(\alpha\in\mathbb{R}_+\), for any \(\beta\in]0,\alpha[\) 
one has \[\intrr B\mbk{y,\alpha-\beta}\subset \int B\mbk{\bar{y},\alpha} \mathrm{ and  }B\mbk{y,\alpha-\beta}\subset  B\mbk{\bar{y},\alpha}, \ \fa y\in B\mbk{\bar{y},\beta}.\]
\end{lemma}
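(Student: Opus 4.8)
The plan is to obtain both inclusions directly from the triangle inequality for the Euclidean norm, with no machinery beyond it. Fix $\bar y\in\mathbb{R}^n$, $\alpha\in\mathbb{R}_+$, $\beta\in]0,\alpha[$, and an arbitrary $y\in B\mbk{\bar y,\beta}$, so that $\normt{y-\bar y}\leq\beta$; everything is then proved for this generic $y$.

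First I would handle the closed-ball inclusion $B\mbk{y,\alpha-\beta}\subset B\mbk{\bar y,\alpha}$. Take any $z\in B\mbk{y,\alpha-\beta}$, i.e.\ $\normt{z-y}\leq\alpha-\beta$. Then
\[
\normt{z-\bar y}\leq\normt{z-y}+\normt{y-\bar y}\leq(\alpha-\beta)+\beta=\alpha ,
\]
so $z\in B\mbk{\bar y,\alpha}$, and since $z$ was arbitrary the inclusion follows.

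For the interior inclusion $\intrr B\mbk{y,\alpha-\beta}\subset\intrr B\mbk{\bar y,\alpha}$ I would first note that, because $\alpha-\beta>0$ (this is exactly where the hypothesis $\beta<\alpha$ is used), one has $\intrr B\mbk{y,\alpha-\beta}=\mcb{z\colon\normt{z-y}<\alpha-\beta}$ and likewise $\intrr B\mbk{\bar y,\alpha}=\mcb{z\colon\normt{z-\bar y}<\alpha}$. Then for $z$ with $\normt{z-y}<\alpha-\beta$ the same chain of inequalities gives $\normt{z-\bar y}<(\alpha-\beta)+\beta=\alpha$, hence $z\in\intrr B\mbk{\bar y,\alpha}$.

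There is essentially no obstacle: the whole argument is two applications of the triangle inequality. The only point worth a moment's care is the identification of the interior of a closed ball with the corresponding open ball, which is valid precisely because the radius $\alpha-\beta$ is strictly positive under the stated assumption $\beta\in]0,\alpha[$ — presumably the reason the lemma excludes the endpoint $\beta=\alpha$.
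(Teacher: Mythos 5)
Your proof is correct and follows essentially the same argument as the paper: a direct application of the triangle inequality, with the strict inequality handling the interior case and the non-strict one the closed-ball case. The extra remark identifying the interior of the closed ball with the open ball (valid since $\alpha-\beta>0$) is a reasonable clarification but does not change the substance.
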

\begin{proof}
Taking \(\beta\in]0,\alpha[\), \(y\in B\mbk{\bar{y},\beta}\) and \(z\in \intrr B\mbk{y,\alpha-\beta}\), one has \(\normt{z-\bar{y}}=\normt{z-y+y-\bar{y}}\leq \normt{z-y}+\normt{y-\bar{y}}<\alpha-\beta+\beta=\alpha\) and thus \(z\in \int B\mbk{\bar{y},\alpha}\). The second part of the conclusion can be proved similarly.
\end{proof}
\begin{proposition}
Let \(C\) be a closed set in \(\mathbb{R}^n\). 
If \(C\) is prox-regular at \(\bar{x}\) for \(\bar{v}\) with respect to \(\bar{\epsilon}\) and \(\rho\), 
then for all \(\beta\in]0,\bar{\epsilon}[\), \(\tilde{x}\in B\mbk{\bar{x},\beta}\), 
and \(\tilde{v}\in N_C\mbk{\tilde{x}}\cap B\mbk{\bar{v},\beta}\), one has \(C\) is prox-regular at \(\tilde{x}\) for 
\(\tilde{v}\) with respect to \(\bar{\epsilon}-\beta\) and \(\rho\).
\end{proposition}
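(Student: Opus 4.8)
The plan is to verify directly the characterization of prox-regularity recorded in Proposition~\ref{7prop:0}, applied to the set \(C\) at the point \(\tilde{x}\) for the vector \(\tilde{v}\), playing \(\bar{\epsilon}-\beta\) in the role of \(\epsilon\) and keeping the same constant \(\rho\). Two of the three ingredients are immediate: \(C\) is locally closed at \(\tilde{x}\) because \(C\) is closed, and \(\tilde{v}\in N_C(\tilde{x})\) (hence also \(\tilde{x}\in C\)) holds by hypothesis. So the only thing to establish is the quadratic inequality \(\mang{w,x'-x}\leq\frac{1}{2}\rho\normt{x'-x}^2\) for all \(x'\in C\cap B\mbk{\tilde{x},\bar{\epsilon}-\beta}\), valid whenever \(x\in\intrr B\mbk{\tilde{x},\bar{\epsilon}-\beta}\) and \(w\in N_C(x)\cap\intrr B\mbk{\tilde{v},\bar{\epsilon}-\beta}\).

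To do this I would fix such \(x',x,w\) and transport the situation back to the original data using Lemma~\ref{7lem:0} twice — once in the primal space and once in the dual space. Since \(\tilde{x}\in B\mbk{\bar{x},\beta}\) with \(\beta\in\,]0,\bar{\epsilon}[\,\), Lemma~\ref{7lem:0} (with \(\bar{y}=\bar{x}\), \(\alpha=\bar{\epsilon}\)) gives \(B\mbk{\tilde{x},\bar{\epsilon}-\beta}\subset B\mbk{\bar{x},\bar{\epsilon}}\) and \(\intrr B\mbk{\tilde{x},\bar{\epsilon}-\beta}\subset\intrr B\mbk{\bar{x},\bar{\epsilon}}\); applying it again with \(\bar{y}=\bar{v}\), \(\alpha=\bar{\epsilon}\) and using \(\tilde{v}\in B\mbk{\bar{v},\beta}\) gives \(\intrr B\mbk{\tilde{v},\bar{\epsilon}-\beta}\subset\intrr B\mbk{\bar{v},\bar{\epsilon}}\). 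Consequently \(x'\in C\cap B\mbk{\bar{x},\bar{\epsilon}}\), \(x\in\intrr B\mbk{\bar{x},\bar{\epsilon}}\), and \(w\in N_C(x)\cap\intrr B\mbk{\bar{v},\bar{\epsilon}}\), so these \(x',x,w\) satisfy exactly the hypotheses of Proposition~\ref{7prop:0} for the prox-regularity of \(C\) at \(\bar{x}\) for \(\bar{v}\) with respect to \(\bar{\epsilon}\) and \(\rho\). The desired inequality then follows, and Proposition~\ref{7prop:0} (in the converse direction) yields that \(C\) is prox-regular at \(\tilde{x}\) for \(\tilde{v}\) with respect to \(\bar{\epsilon}-\beta\) and \(\rho\).

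This argument is essentially set-inclusion bookkeeping, so I do not anticipate a genuine obstacle; the only point that needs a moment's care is making sure that \emph{all three} balls involved (the closed ball for \(x'\), and the open balls for \(x\) and for \(w\)) shrink into their respective originals under the radius reduction, which is precisely why Lemma~\ref{7lem:0} is stated with both the interior and the closed-ball inclusions and must be invoked separately in the primal and dual spaces.
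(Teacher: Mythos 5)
Your proposal is correct and follows essentially the same route as the paper's own proof: verify the characterization in Proposition~\ref{7prop:0} at \(\tilde{x}\) for \(\tilde{v}\) with radius \(\bar{\epsilon}-\beta\), using Lemma~\ref{7lem:0} in both the primal and dual balls to transport \(x'\), \(x\), and \(w\) back into the original \(\bar{\epsilon}\)-balls around \(\bar{x}\) and \(\bar{v}\). No gaps.
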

\begin{proof}
For all \(\beta\in]0,\bar{\epsilon}[\), \(\tilde{x}\in B\mbk{\bar{x},\beta}\), 
and \(\tilde{v}\in N_C\mbk{\tilde{x}}\cap B\mbk{\bar{v},\beta}\), by Proposition \ref{7prop:0} we need to prove \begin{equation}\label{7med:p21}
\mang{w,x'-x}\leq\frac{1}{2}\rho\normt{x'-x}^2,\ \forall \,   x'\in C\cap B(\tilde{x},\bar\epsilon-\beta)
\end{equation}    
whenever 
\begin{equation}\label{7med:p22}
x\in \intrr B\mbk{\tilde{x},\bar\epsilon-\beta}\text{ and } w\in N_C(x)\cap \intrr B\mbk{\tilde{w},\bar\epsilon-\beta}. 
\end{equation}
Applying Lemma \ref{7lem:0} to \(\bar{x}\) and \(\bar{\epsilon}\), we verify that 
\begin{gather}
\intrr B\mbk{\tilde{x},\bar\epsilon-\beta}\subset \int B\mbk{\bar{x},\bar\epsilon},\ \intrr B\mbk{\tilde{v},\bar\epsilon-\beta}\subset \int B\mbk{\bar{v},\bar\epsilon}\mathrm{ and }\label{7eq:prop24}\\
B\mbk{\tilde{x},\bar\epsilon-\beta}\subset B\mbk{\bar{x},\bar\epsilon}.\label{7eq:prop27}
\end{gather}
If \eqref{7med:p22} is true then together with \eqref{7eq:prop24} it implies 
\begin{equation}\label{7eq:prop25}
x\in \int B\mbk{\bar{x},\bar\epsilon}\text{ and } w\in N_C(x)\cap \int B\mbk{\bar{v},\bar\epsilon}. 
\end{equation}
Under \eqref{7eq:prop25}, the prox-regularity of \(\bar{x}\) at \(\bar{v}\) reveals 
\begin{equation}\label{7eq:pro26}
\mang{w,x''-x}\leq\frac{1}{2}\rho\normt{x''-x}^2,\ \forall \,   x''\in C\cap B(\bar{x},\bar\epsilon).
\end{equation}
Then combining \eqref{7eq:pro26} and \eqref{7eq:prop27} we get \eqref{7med:p21}.
\end{proof}
\begin{definition}
We say a function \(f\colon\mathbb{R}^n\rightarrow \bar{\mathbb{R}}\) is properly prox-regular at \(\bar{x}\) for \(\bar{w}\) if \(f\) is prox-regular at \(\bar{x}\) for \(\bar{w}\) and \(\epi f\) is prox-regular at \((\bar{x},f(\bar{x}))\) for \((\bar{w},-1)\). 

If the conditions hold for all \(g\in\partial f(\bar{x})\) and all \(({g},-1)\in N_{\epi f}(\bar{x},f(\bar{x}))\) then we say \(f\) is properly prox-regular at \(\bar{x}\).
\end{definition}
\begin{proposition}\label{7prop:3}
(i) If a set $C\subset\mathbb{R}^n$ is prox-regular at $\bar{x}$ then $C$ is Clarke regular at $\bar{x}$.

(ii) If a function \(f\) is properly prox-regular at \(\bar{x}\) then \(f\) is subdifferentially regular at \(\bar{x}\).
\end{proposition}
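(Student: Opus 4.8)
Both statements rest on a single reading of Proposition~\ref{7prop:0}. Suppose a set $C$ is prox-regular at $\bar x$ for $\bar w$, and let $\epsilon>0$, $\rho\ge 0$ be the constants it furnishes. In the characterization there one may take $x=\bar x$ and $w=\bar w$, since $\bar x\in\intrr B\mbk{\bar x,\epsilon}$ and $\bar w\in N_C(\bar x)\cap\intrr B\mbk{\bar w,\epsilon}$; this yields $\mang{\bar w,x'-\bar x}\le\frac{1}{2}\rho\normt{x'-\bar x}^{2}$ for all $x'\in C\cap B\mbk{\bar x,\epsilon}$. Dividing by $\normt{x'-\bar x}$ and letting $x'\to\bar x$ in $C$ gives $\limsup_{x'\to\bar x,\,x'\in C}\mang{\bar w,x'-\bar x}/\normt{x'-\bar x}\le 0$, i.e.\ $\bar w$ is a \emph{regular} normal of $C$ at $\bar x$. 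I will call this the base-point estimate.

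\emph{Part (i).} By assumption $C$ is prox-regular at $\bar x$ for every $\bar w\in N_C(\bar x)$, so the base-point estimate shows that every limiting normal of $C$ at $\bar x$ is a regular normal; the opposite inclusion of normal cones is automatic. Since Proposition~\ref{7prop:0} also guarantees that $C$ is locally closed at $\bar x$, the limiting and regular normal cones of $C$ at $\bar x$ coincide, which is exactly Clarke regularity of $C$ at $\bar x$.

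\emph{Part (ii).} Subdifferential regularity of $f$ at $\bar x$ means Clarke regularity of the set $\epi f$ at $\mbk{\bar x,f(\bar x)}$, and $\epi f$ is locally closed there because $f$ is locally lsc at $\bar x$, which is built into prox-regularity of $f$. So it suffices to show that every $(\bar w,\bar\alpha)\in N_{\epi f}\mbk{\bar x,f(\bar x)}$ is a regular normal of $\epi f$; recall $\bar\alpha\le 0$. If $\bar\alpha<0$, write $(\bar w,\bar\alpha)=|\bar\alpha|\,(\bar w/|\bar\alpha|,-1)$; because $(\bar w/|\bar\alpha|,-1)\in N_{\epi f}\mbk{\bar x,f(\bar x)}$ is equivalent to $\bar w/|\bar\alpha|\in\partial f(\bar x)$, the assumption that $f$ is properly prox-regular at $\bar x$ makes $\epi f$ prox-regular at $\mbk{\bar x,f(\bar x)}$ for $(\bar w/|\bar\alpha|,-1)$, so by the base-point estimate applied to the set $\epi f$ this vector is a regular normal, and hence so is its positive multiple $(\bar w,\bar\alpha)$.

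\emph{Main obstacle.} The remaining case $\bar\alpha=0$ --- equivalently $\bar w\in\partial^{\infty}f(\bar x)$ --- carries the real content, since proper prox-regularity says nothing directly about the horizontal normals of $\epi f$. Here the plan is to show that $(\bar w,0)$ is a regular normal of $\epi f$ \emph{directly}: because $\normt{(x'-\bar x,\,r'-f(\bar x))}\ge\normt{x'-\bar x}$ for every $(x',r')\in\epi f$, it is enough to prove $\mang{\bar w,x'-\bar x}\le o(\normt{x'-\bar x})$ as $x'\to\bar x$ in $\dom f$, that is, $\partial^{\infty}f(\bar x)\subseteq\widehat N_{\dom f}(\bar x)$, the regular normal cone of $\dom f$ at $\bar x$. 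To get this inclusion I would write $\bar w$ as a horizon limit $\lambda_{k}g_{k}\to\bar w$ with $g_{k}\in\partial f(x_{k})$, $x_{k}\to\bar x$, $f(x_{k})\to f(\bar x)$, $\lambda_{k}\searrow 0$, and push the subgradient inequality of prox-regularity --- which proper prox-regularity makes available on a fixed neighbourhood, with uniform $\epsilon$ and $\rho$, over the relevant $f$-attentive localization of $\partial f$ --- through this limiting process; the uniformity of the constants together with local closedness are exactly what legitimise the passage to the limit. Once every normal of $\epi f$ at $\mbk{\bar x,f(\bar x)}$ is seen to be regular, $\epi f$ is Clarke regular there, i.e.\ $f$ is subdifferentially regular at $\bar x$. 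I expect essentially all the difficulty of the proposition to sit in this horizontal-normal step.
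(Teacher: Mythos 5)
Your part (i) is exactly the paper's argument: put $x=\bar{x}$ and $w=\bar{w}$ in the characterization of Proposition \ref{7prop:0}, divide by $\normt{x'-\bar{x}}$, pass to the limsup to get $\bar{w}\in\hat{N}_C(\bar{x})$, and conclude Clarke regularity since local closedness is already part of prox-regularity. No issue there.

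For part (ii) the paper does nothing more than apply (i) to the set $C=\epi f$: proper prox-regularity supplies prox-regularity of $\epi f$ at $(\bar{x},f(\bar{x}))$, and subdifferential regularity is by definition Clarke regularity of $\epi f$ at that point, so the paper declares (ii) immediate. Your reduction of the normals $(\bar{w},\bar{\alpha})$ with $\bar{\alpha}<0$ to the vector $(\bar{w}/|\bar{\alpha}|,-1)$ is exactly how that one-liner gets used and is fine. The gap is the horizontal case $\bar{\alpha}=0$, which you correctly isolate but then only sketch; as submitted this is a plan, not a proof. Worse, the sketch is doubtful on its own terms: the prox-regularity inequality you propose to ``push through'' a horizon limit is only valid for subgradients lying within $\epsilon$ of a fixed $\bar{v}\in\partial f(\bar{x})$ (that is precisely what the $f$-attentive $\epsilon$-localization gives), whereas in a horizon limit $\lambda_k g_k\to\bar{w}\neq 0$ with $\lambda_k\searrow 0$ the subgradients $g_k$ are unbounded and eventually leave every such localization, so the uniform constants do not legitimize the limit passage as claimed. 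To be fair to you, the paper does not treat this case either: its ``immediate from (i)'' tacitly reads proper prox-regularity as prox-regularity of $\epi f$ for \emph{all} of its normals at $(\bar{x},f(\bar{x}))$, while the written definition only covers normals of the form $(g,-1)$. So you have identified a genuine subtlety that the paper glosses over, but your proposal does not close it; either work with the stronger reading of the hypothesis (prox-regularity of $\epi f$ at $(\bar{x},f(\bar{x}))$ for every normal, after which (ii) really is just (i) applied to the epigraph) or supply a complete argument for the horizontal normals.
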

\begin{proof}
If $C\subset\mathbb{R}^n$ is prox-regular at $\bar{x}$ then $C$ is locally closed at $\bar{x}$ and for each $\bar{v}\in N_C(\bar{x})$ there exist $\epsilon>0$ and $\rho\geq 0$ such that $\mang{v,x'-x}\leq\frac{1}{2}\rho\normt{x'-x}^2$ for all $x'\in C\cap \mathbb{B}(\bar{x},\epsilon)$ when $v\in N_C(x)$, $\normt{v-\bar{v}}<\epsilon$ and $\normt{x-\bar{x}}<\epsilon$. 
For each $\bar{v}\in N_C(\bar{x})$ we take $v=\bar{v}$ and $x=\bar{x}$ to obtain $\mang{\bar{v},x'-\bar{x}}\leq\frac{1}{2}\rho\normt{x'-\bar{x}}^2$ for all $x'\in C\cap \mathbb{B}(\bar{x},\epsilon)$. 
This means $\limsup\limits_{\mathclap{\substack{x'\underset{\mathclap{C}}{\rightarrow}\bar{x}\\x'\not=\bar{x}} }}\frac{\mang{\bar{v},x'-\bar{x}}}{\normt{x'-\bar{x}}}\leq 0$. 
Therefore $\bar{v}\in\hat{N}_C\mbk{\bar{x}}$ and $C$ is Clarke regular at \(\bar{x}\). Conclusion (ii) is immediate from (i) and the definition of subdifferential regularity.
\end{proof}
The following proposition is taken from 1.107 of \cite{Mordukhovich2006}.
\begin{proposition}\label{7pro:5}
Given an arbitrary function \(\varphi\colon\mathbb{R}^n\mapsto\bar{\mathbb{R}}\) finite at \(\bar{x}\), the following hold:\\
\indent (i) For any \(\phi\colon\mathbb{R}^n\mapsto\bar{\mathbb{R}}\) Fr\'{e}chet differentiable at \(\bar{x}\) one has 
\begin{equation}
\hat\partial \mbk{\phi+\varphi}\mbk{\bar{x}}=\nabla\phi\mbk{\bar{x}}+\hat\partial\varphi\mbk{\bar{x}}.
\end{equation}
\indent (ii) For any \(\phi\colon\mathbb{R}^n\mapsto\bar{\mathbb{R}}\) strictly differentiable at \(\bar{x}\) one has 
\begin{equation}
\partial \mbk{\phi+\varphi}\mbk{\bar{x}}=\nabla\phi\mbk{\bar{x}}+\partial\varphi\mbk{\bar{x}}.
\end{equation}
\end{proposition}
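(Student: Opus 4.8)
The plan for part (i) is a one-line computation from the analytic definition of the Fr\'{e}chet subdifferential: $v\in\hat\partial g(\bar{x})$ exactly when $\liminf_{x\to\bar{x}}\mbk{g(x)-g(\bar{x})-\mang{v,x-\bar{x}}}/\normt{x-\bar{x}}\geq 0$. First I would set $g=\phi+\varphi$ and insert the Fr\'{e}chet expansion $\phi(x)=\phi(\bar{x})+\mang{\nabla\phi(\bar{x}),x-\bar{x}}+o\mbk{\normt{x-\bar{x}}}$. The difference quotient for $\phi+\varphi$ at $\bar{x}$ with slope $v$ then becomes the difference quotient for $\varphi$ at $\bar{x}$ with slope $v-\nabla\phi(\bar{x})$, plus a remainder $o\mbk{\normt{x-\bar{x}}}/\normt{x-\bar{x}}$ that tends to $0$; since a vanishing additive term does not affect a $\liminf$, one gets $v\in\hat\partial\mbk{\phi+\varphi}(\bar{x})$ if and only if $v-\nabla\phi(\bar{x})\in\hat\partial\varphi(\bar{x})$, which is the claimed identity. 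Note that only differentiability of $\phi$ at the single point $\bar{x}$ is needed here.

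For part (ii) the plan is to pass through the $\varepsilon$-enlarged Fr\'{e}chet subdifferentials $\hat\partial_\varepsilon g(x)=\{v:\liminf_{x'\to x}\mbk{g(x')-g(x)-\mang{v,x'-x}}/\normt{x'-x}\geq -\varepsilon\}$ and to invoke the standard limiting representation valid in $\mathbb{R}^n$, namely $\partial g(\bar{x})=\limsup_{x\to_g\bar{x},\,\varepsilon\downarrow 0}\hat\partial_\varepsilon g(x)$, where $x\to_g\bar{x}$ means $x\to\bar{x}$ with $g(x)\to g(\bar{x})$. For the inclusion $\subseteq$ I would take $v\in\partial\mbk{\phi+\varphi}(\bar{x})$, choose $x_k\to\bar{x}$ with $\mbk{\phi+\varphi}(x_k)\to\mbk{\phi+\varphi}(\bar{x})$ and $v_k\to v$, $v_k\in\hat\partial\mbk{\phi+\varphi}(x_k)$, and for a target $\varepsilon$ split it as $\varepsilon=\eta+\zeta$, combining two facts: the Fr\'{e}chet-subgradient inequality for $v_k$ at $x_k$ (contributing the $\eta$) and the strict-differentiability estimate $\lvert\phi(x)-\phi(x_k)-\mang{\nabla\phi(\bar{x}),x-x_k}\rvert\leq\zeta\normt{x-x_k}$ for $x,x_k$ near $\bar{x}$ (contributing the $\zeta$). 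This yields $v_k-\nabla\phi(\bar{x})\in\hat\partial_\varepsilon\varphi(x_k)$ for all large $k$; a diagonalization over $\varepsilon=1/m$, together with $\varphi(x_k)\to\varphi(\bar{x})$ (which follows from continuity of $\phi$ at $\bar{x}$), then places $v-\nabla\phi(\bar{x})$ in $\partial\varphi(\bar{x})$ through the limiting representation. The reverse inclusion is symmetric: starting from $u\in\partial\varphi(\bar{x})$ with Fr\'{e}chet subgradients $u_k$ at points $x_k\to_\varphi\bar{x}$, the two-sided strict-differentiability estimate gives $u_k+\nabla\phi(\bar{x})\in\hat\partial_\varepsilon\mbk{\phi+\varphi}(x_k)$ for large $k$, and the same diagonalization places $\nabla\phi(\bar{x})+u$ in $\partial\mbk{\phi+\varphi}(\bar{x})$.

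The step I expect to be the main obstacle is precisely where strict differentiability enters in (ii): because strict differentiability at $\bar{x}$ does not entail Fr\'{e}chet differentiability at the nearby points $x_k$, one cannot simply apply (i) pointwise along the sequence, and is forced into the $\varepsilon$-subdifferential calculus and the limiting representation (equivalently, into a fuzzy/approximate sum rule). A cleaner-looking but ultimately not shorter alternative is the epigraphical route: the homeomorphism $F(x,\alpha)=\mbk{x,\alpha+\phi(x)}$ maps $\epi\varphi$ onto $\epi\mbk{\phi+\varphi}$ and is strictly differentiable at $\mbk{\bar{x},\varphi(\bar{x})}$ with invertible derivative, so that in principle $N_{\epi\mbk{\phi+\varphi}}$ is obtained from $N_{\epi\varphi}$ by a change of variables; but justifying a normal-cone transformation rule for a map that is only strictly differentiable at one point is itself the genuine work, so I would carry out the $\varepsilon$-subdifferential argument above.
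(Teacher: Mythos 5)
Your proposal is correct; note that the paper does not prove this statement at all -- it simply quotes it as Proposition 1.107 of Mordukhovich's book. Your argument (part (i) straight from the liminf definition of the Fr\'{e}chet subdifferential, part (ii) via \(\varepsilon\)-Fr\'{e}chet subgradients, the two-point strict-differentiability estimate, and the limiting representation of \(\partial\varphi\) with a diagonal passage \(\varepsilon\downarrow 0\)) is essentially the standard proof given in that reference, and your observation that (i) cannot be applied pointwise along the approximating sequence is exactly the right reason the \(\varepsilon\)-enlargement is needed.
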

\section{U-Lagrangian}
We begin this section with a very generic definition of the subspaces we will use. The official UV-decomposition will be defined in the next section. 
\begin{definition}\label{7def:1}
Given a point \(\bar{x}\), let \(\mathcal{V}(\bar{x})\) be a 
subspace of \(\mathbb{R}^n\) 
such that the set \(\mcb{ g^{\circ}\colon g^{\circ}+ \intrr B_{\mathcal{V}}(0,\epsilon)\subset\partial f(\bar{x}) }\) is not empty. 
We define \(\mathcal{U}(\bar{x})\mathrel{\mathop:}=\mathcal{V}(\bar{x})^{\bot}\) 
so that for any $x\in \mathbb{R}^n$
we have its two components associated with the 
decomposition via,
\begin{equation}
x_u\mathrel{\mathop:}=P_{\mathcal{U}(\bar{x})}(x),\qquad x_v\mathrel{\mathop:}=P_{\mathcal{V}(\bar{x})}(x).
\end{equation}   
To simplify notation we define \(D_{\epsilon}f\mathrel{\mathop:}=\{g^{\circ}\colon g^{\circ}+\intrr B_{\mathcal{V}}(0,\epsilon)\subset\partial f(\bar{x})\}\) and \(\mathcal{U}\mathrel{\mathop:}=\mathcal{U}(\bar{x})\) and \(\mathcal{V}\mathrel{\mathop:}=\mathcal{V}(\bar{x})\).
\end{definition}

\begin{asump}\label{7as:1}
The function $f$ is proper, l.s.c. on $\mathbb{R}^n$ and properly prox-regular at $\bar{x}$ with respect to $\bar{\epsilon}$ and $\rho$.
\end{asump}
\begin{definition}\label{def:1}
Given $\epsilon>0$,
we take an arbitrary 
\(\bar{g}\in D_{\epsilon}f\)
and define the function $L_{\epsilon}$ as follows:
\begin{equation}\label{def:L}
\mathcal{U}\ni u\mapsto L_{\epsilon}(u;\bar{g}_{v})\mathrel{\mathop:}=\inf_{v\in B_{\mathcal{V}}(0,\epsilon)}\mcb{f(\bar{x}+u+v)-\mang{\bar{g}_{v},v}}.
\end{equation}
Associated with \eqref{def:L} we have the set of $\mathcal{V}$-space minimizers 
\begin{equation}
W(u;\gbarv)\mathrel{\mathop:}=\argmin_{v\in B_{\mathcal{V}}(0,\epsilon)} \mcb{f(\bar{x}+u+v)-\mang{\bar{g}_{v},v}}.
\end{equation}
\end{definition}
To simplify notation we let 
\begin{equation}
h(u,v)\mathrel{\mathop:}=f(\bar{x}+u+v)-\mang{\bar{g}_v,v}+\delta_{B_{\mathcal{V}}(0,\epsilon)}(v).
\end{equation}
\begin{theorem}\label{7th:2}
Suppose Assumption \ref{7as:1} holds, then 

(i) \(h(u,v)\) is proper, l.s.c. on \(\mathcal{U}\times\mathcal{V}\) and level bounded in $v$ locally uniformly in $u$;

(ii) \(\partial h(0,0)=\mcb{(g_u,g_v-\bar{g}_v)\colon g\in\partial f(\bar{x})}\) and \(h(u,v)\) is prox-regular at \((0,0)\) with respect to \(\bar{\epsilon}\) and \(\rho\);

(iii) $L_{\epsilon}$ is proper and l.s.c. on $\mathcal{U}$, and for each $u\in \dom L_{\epsilon}$ the set $W(u;\bar{g}_v)$ is nonempty and compact whereas $W(u;\bar{g}_v)=\emptyset$ when $u\not\in\dom L_{\epsilon}$;

(iv) For each \(s\in \partial L_{\epsilon}\mbk{u;\bar{g}_v} \) there exists \(\hat{v}\in W\mbk{u;\bar{g}_v}\) such that \(\mbk{s,0}\in\partial h\mbk{u,\hat{v}}\).
\end{theorem}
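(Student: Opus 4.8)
The plan is to dispose of items (i) and (iii) with standard parametric-minimization machinery and to concentrate the real work in (ii). For (i): $h$ is the sum of $(u,v)\mapsto f(\bar x+u+v)$ — proper and l.s.c.\ because $f$ is and because $\bar x\in\dom f$ (as $\bar g\in D_\epsilon f\subseteq\partial f(\bar x)$) — the continuous linear term $-\langle\bar g_v,v\rangle$, and the indicator of the closed set $B_{\mathcal{V}}(0,\epsilon)$; hence $h$ is proper and l.s.c.\ on $\mathcal{U}\times\mathcal{V}$ with $h(0,0)=f(\bar x)$ finite. Level-boundedness in $v$ locally uniformly in $u$ is immediate since, for every $u$, the sublevel set $\{v:h(u,v)\le\alpha\}$ sits inside the fixed bounded set $B_{\mathcal{V}}(0,\epsilon)$. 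For (iii), note that $L_\epsilon(u;\bar g_v)=\inf_v h(u,v)$ and $W(u;\bar g_v)=\argmin_v h(u,v)$, so the conclusions follow at once from the classical parametric-minimization theorem applied to $h$ with the properties just recorded (cf.\ Theorem 1.17 of \cite{Rockafellar1998}): $L_\epsilon$ is proper and l.s.c., $\dom L_\epsilon$ is the set where the infimum is finite, and $W(u;\bar g_v)$ is nonempty and compact on $\dom L_\epsilon$ and empty off it.

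For the subdifferential formula in (ii): because $0$ is an interior point of $B_{\mathcal{V}}(0,\epsilon)$, the indicator summand vanishes on a neighborhood of $(0,0)$, so locally $h(u,v)=f(\bar x+u+v)-\langle\bar g_v,v\rangle$. The map $(u,v)\mapsto\bar x+u+v$ is an affine bijection of $\mathcal{U}\times\mathcal{V}$ onto $\mathbb{R}^n$ whose linear part is an isometry with adjoint $g\mapsto(g_u,g_v)$; the change-of-variables rule for the limiting subdifferential then gives $\partial[(u,v)\mapsto f(\bar x+u+v)](0,0)=\{(g_u,g_v):g\in\partial f(\bar x)\}$, and stripping the strictly differentiable linear term via Proposition \ref{7pro:5}(ii) shifts the $\mathcal{V}$-component by $-\bar g_v$, which is precisely the asserted identity.

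For the prox-regularity in (ii): the same local identity, together with the locality of prox-regularity, reduces the claim to prox-regularity at $(0,0)$ of $\tilde f(u,v):=f(\bar x+u+v)-\langle\bar g_v,v\rangle$ with the constants $\bar\epsilon,\rho$ inherited from Assumption \ref{7as:1}. The affine-isometric change of variables transports the defining inequality of prox-regularity of $f$ at $\bar x$ (valid, by hypothesis, for all its subgradients) to $\tilde f$ at $(0,0)$ without altering $\epsilon$ or $\rho$, since norms are preserved and subgradients transform by the adjoint; subtracting a linear function then preserves both the inequality and the constants while translating subgradients by $(0,-\bar g_v)$. I expect the bookkeeping here to be the main obstacle: one has to confirm that the indicator does not degrade the estimate near the sphere $\|v\|=\epsilon$ — which it does not, because points $v'$ with $\|v'\|>\epsilon$ render the prox-regularity inequality trivial and the outward-normal contribution at boundary base points has the favorable sign — and that the ball radii coming from the $\mathcal{U}\oplus\mathcal{V}$ splitting are handled consistently (here Lemma \ref{7lem:0} and the relation between $\epsilon$ and $\bar\epsilon$ do the work).

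Finally, (iv) is the subgradient analogue of (iii): applying the marginal-function subgradient rule for parametric minimization to $h$ — whose hypotheses are exactly what (i) establishes (cf.\ Theorem 10.13 of \cite{Rockafellar1998}) — any $s\in\partial L_\epsilon(u;\bar g_v)$ is the parameter-component of a subgradient of $h$ at some minimizer $\hat v\in W(u;\bar g_v)$ with vanishing variable-component, i.e.\ $(s,0)\in\partial h(u,\hat v)$ in the ordering of $h(u,v)$. The point to watch is that this is an inclusion for the limiting (not merely the regular) subdifferential; the level-boundedness from (i) lets one pass to the limit along near-minimizers, and compactness of $W(u;\bar g_v)$ from (iii) keeps the limit point inside it.
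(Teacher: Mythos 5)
Your treatment of (i), (iii) and (iv) is exactly the paper's: properness and lower semicontinuity of $h$ from those of $f$ plus closedness of $B_{\mathcal{V}}(0,\epsilon)$, level-boundedness because the $v$-sublevel sets sit in the fixed ball, then Theorem 1.17 of \cite{Rockafellar1998} for (iii) and the marginal-function rule 10.13 of \cite{Rockafellar1998} for (iv). Where you genuinely diverge is (ii). The paper splits $h=h_1+h_2$ with $h_1(u,v)=f(\bar{x}+u+v)$ and $h_2(v)=-\mang{\bar{g}_v,v}+\delta_{B_{\mathcal{V}}(0,\epsilon)}(v)$, derives subdifferential regularity of $h_1$ at $(0,0)$ from proper prox-regularity (via Proposition \ref{7prop:3}(ii)), and then invokes the regular sum rule 10.9 of \cite{Rockafellar1998} for the subdifferential formula and 13.35 of \cite{Rockafellar1998} for prox-regularity of the sum. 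You instead observe that the indicator vanishes on a neighborhood of $(0,0)$ (since $0\in\intrr B_{\mathcal{V}}(0,\epsilon)$ in $\mathcal{V}$), so that locally $h$ is $f(\bar{x}+u+v)-\mang{\bar{g}_v,v}$, compute the subdifferential by the isometric change of variables plus the exact sum rule for a strictly differentiable perturbation (Proposition \ref{7pro:5}(ii)), and transport the prox-regularity inequality directly, noting that comparison points outside the ball make the inequality trivial and that outward-normal contributions at boundary base points have the right sign. Your route buys something real: the subdifferential identity needs no regularity hypothesis at all (only the standing prox-regularity is used for prox-regularity of $h$), and your explicit boundary/sign discussion is actually more attentive to the claim ``with respect to $\bar{\epsilon}$ and $\rho$'' than the paper's bare citation of 13.35, since with radius $\bar{\epsilon}>\epsilon$ base points on the sphere $\normt{v}=\epsilon$ do enter and $h_2$ is only smooth near the origin. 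The paper's route, in exchange, is shorter and leans entirely on standard calculus results. One small inaccuracy on your side: Theorem \ref{7th:2} assumes only Assumption \ref{7as:1}, so you should not lean on the relation $\bar{\epsilon}>\epsilon$ from Assumption \ref{7as:2} (your argument does not actually need it, since the trivial/favorable-sign observations work for any radii); also, as with the paper, the $h$-attentive localization after subtracting the linear term strictly speaking may force a slightly smaller $\epsilon$-parameter, a bookkeeping point neither proof pins down.
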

\begin{proof}
(i) We have $h$ is proper on $\mathcal{U}\times\mathcal{V}$ because $h(0,0)=f(\bar{x})$ is finite from prox-regularity of $f$ at $\bar{x}$. We also have $h$ is l.s.c. because $f$ is l.s.c. and $B_{\mathcal{V}}(0,\epsilon)$ is closed. We show $h(u,v)$ is
level-bounded in $v$ locally uniformly in $u$ by equivalently showing that the mapping $u\mapsto\mcb{v\colon h(u,v)\leq\alpha}$ is locally bounded for each $\alpha\in\mathbb{R}$ (see 5.17 of \cite{Rockafellar1998}). Let $S(u')$ be an arbitrary neighborhood of an arbitrary point $u'\in\mathcal{U}$, the set $\mcb{v\colon h(u,v)\leq \alpha,u\in S(u')}$ is clearly contained in $B_{\mathcal{V}}(0,\epsilon)$ for all $u'$ and $\alpha$. Hence $u\mapsto\mcb{v\colon h(u,v)\leq\alpha}$ is locally bounded for each $\alpha\in\mathbb{R}$.

(ii) Consider the function \(h_1\colon\mathcal{U}\times\mathcal{V}\mapsto\bar{\mathbb{R}}\) defined by \(h_1(u,v)=f(\bar{x}+u+v)\). The subdifferential of \(h_1\) is \(\partial h_1(u,v)=\mcb{(g_u,g_v)\colon g\in\partial f(\bar{x}+u+v)}\). 
From the definition of prox-regularity and the fact that \(f\) is prox-regular at \(\bar{x}\) we can easily verify by definition that \(h_1\) is prox-regular at \((0,0)\).   
We then write \(h(u,v)=h_1(u,v)+h_2(v)\) where \(h_2(v)=-\mang{\bar{g}_v,v}+\delta_{B_{\mathcal{V}}(0,\epsilon)}(v)\). 
We have \(\partial h_2(0)=\mcb{-\bar{g}_v}\). 
By Assumption \ref{7as:1} we have \(f\) is properly prox-regular at \(\bar{x}\). 
From Proposition \ref{7prop:3}(ii) we have \(f\) is subdifferentially regular at \(\bar{x}\). 
We can also verify that \(h_1\) is properly prox-regular at \((0,0)\) (as a straight forward application of Proposition \ref{7prop:0}) and hence subdifferentially regular there.  
Thus \(h\) is subdifferentially regular at \((0,0)\). From 10.9 of \cite{Rockafellar1998} we have \(\partial h(0,0)=\partial h_1(0,0)+\mcb{0,\partial h_2(0)}=\mcb{(g_u,g_v-\bar{g}_v)\colon g\in\partial f(\bar{x})}\). By 13.35 of \cite{Rockafellar1998} we have \(h\) is prox-regular at \((0,0)\).

(iii) We see $L_{\epsilon}(u;\bar{g}_v)=\inf_{v\in\mathcal{V}}\mcb{h(u,v)}$. By 1.17 of \cite{Rockafellar1998} it suffices to show (i).

(iv) Under conclusion (i) we can apply 10.13 of \cite{Rockafellar1998} to obtain\\ 
\(\partial L_{\epsilon}\mbk{u;\bar{g}_v}\subset\cup_{\hat{v}\in W\mbk{u;\bar{g}_v}}\mcb{w\colon\mbk{w,0}\in\partial h\mbk{u,\hat{v }}}\). 
Since \(s\in \partial L_{\epsilon}\mbk{u;\bar{g}_v}\), there exists \(\hat{v}\) such that \(\mbk{s,0}\in \partial h(u,\hat{v})\). 
\end{proof}
\begin{asump}\label{7as:2}
We assume that in Assumption \ref{7as:1}, $\rho\in ]0,2[$ and $\bar{\epsilon}>\epsilon$ where $\epsilon$ is introduced in Definition \ref{def:1}.
\end{asump}
\begin{definition}[Proximal subdifferential]\label{7def:1}
A vector \(g\) is called a proximal subgradient of a function \(f\colon\mathbb{R}^n\mapsto\bar{\mathbb{R}}\) at \(\bar{x}\in\dom f\) if there exist \(\epsilon>0\) and \(\rho>0\) such that 
\begin{equation}
f(x)\geq f(\bar{x})+\mang{v,x-\bar{x}}-\frac{\rho}{2}\normt{x-\bar{x}}^2\text{ when }\normt{x-\bar{x}}\leq\epsilon.
\end{equation}
The set of all proximal subgradients of \(f\) at \(\bar{x}\) is called the proximal subdifferential and is denoted by \(\partial_p f(\bar{x})\). If \(\bar{x}\not\in\dom f\) then \(\partial_p f(\bar{x})=\emptyset\).
\end{definition}

\begin{theorem}\label{7th:3}
Suppose Assumptions \ref{7as:1} and \ref{7as:2} hold.

(i) $L_{\epsilon}(0;\bar{g}_v)=f(\bar{x})$ and $W(0;\bar{g}_v)=\{0\}$;

(ii) \(\bar{g}_u\in\partial_p L_{\epsilon}(0;\bar{g}_v)\).

(iii) \(
L_{\epsilon}(u;\bar{g}_v)\geq f(\bar{x})+\mang{\bar{g}_u,u}-\frac{\rho}{2}\normt{u}^2,\ \forall \,   u\in B_{\mathcal{U}}(0,r),\) where \(r=\sqrt{\bar{\epsilon}-\epsilon}\).
\end{theorem}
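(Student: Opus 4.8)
The plan is to isolate a single inequality — valid for all $u\in\mathcal{U}$ and $v\in B_{\mathcal{V}}(0,\epsilon)$ for which $\bar x+u+v$ lies in the prox-regularity neighbourhood of $f$ at $\bar x$ — and then read off (i), (iii) and (ii) from it in that order, (ii) being a one-line consequence of (i) and (iii). The mechanism is a rebalancing trick. Prox-regularity of $f$ at $\bar x$ for a subgradient $g^{\circ}$ gives $f(\bar x+u+v)\ge f(\bar x)+\langle g^{\circ},u+v\rangle-\tfrac{\rho}{2}\normt{u+v}^2$, and $\normt{u+v}^2=\normt{u}^2+\normt{v}^2$ because $\mathcal{U}\perp\mathcal{V}$; the unwanted term $-\tfrac{\rho}{2}\normt{v}^2$ is cancelled by taking $g^{\circ}=\bar g+\tfrac{\rho}{2}v$ rather than $\bar g$, since then $\langle g^{\circ},u+v\rangle=\langle\bar g_u,u\rangle+\langle\bar g_v,v\rangle+\tfrac{\rho}{2}\normt{v}^2$. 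This perturbation is legitimate exactly under our hypotheses: $\bar g\in D_{\epsilon}f$, and by Assumption~\ref{7as:2}, $\normt{\tfrac{\rho}{2}v}=\tfrac{\rho}{2}\normt{v}\le\tfrac{\rho}{2}\epsilon<\epsilon$ (here $\rho<2$ is used), so $\tfrac{\rho}{2}v\in\mathrm{int}\,B_{\mathcal{V}}(0,\epsilon)$ and hence $g^{\circ}\in\partial f(\bar x)$; since $f$ is properly prox-regular at $\bar x$ (Assumption~\ref{7as:1}), it is prox-regular at $\bar x$ for this $g^{\circ}$ with the uniform constants $\bar\epsilon,\rho$.

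Carrying this out and rearranging yields the master inequality
\[
f(\bar x+u+v)-\langle\bar g_v,v\rangle\ \ge\ f(\bar x)+\langle\bar g_u,u\rangle-\tfrac{\rho}{2}\normt{u}^2,
\]
for $v=0$ the same bound coming directly from prox-regularity at $\bar x$ for $\bar g$ (which lies in $\partial f(\bar x)$ as $0\in\mathrm{int}\,B_{\mathcal{V}}(0,\epsilon)$). For (i), put $u=0$: the inequality gives $f(\bar x+v)-\langle\bar g_v,v\rangle\ge f(\bar x)$ on $B_{\mathcal{V}}(0,\epsilon)$, with the value $f(\bar x)$ attained at $v=0$; running the estimate instead with $g^{\circ}=\bar g+\delta v/\normt{v}$ for $\delta\in\,]\tfrac{\rho}{2}\normt{v},\epsilon[$ (a nonempty interval, again because $\tfrac{\rho}{2}\epsilon<\epsilon$) sharpens it to $f(\bar x+v)-\langle\bar g_v,v\rangle>f(\bar x)$ for $v\ne 0$, so $L_{\epsilon}(0;\bar g_v)=f(\bar x)$ and $W(0;\bar g_v)=\{0\}$. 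For (iii), take the infimum over $v\in B_{\mathcal{V}}(0,\epsilon)$ in the master inequality; this requires checking that $\normt{u}\le r=\sqrt{\bar\epsilon-\epsilon}$ together with $\normt{v}\le\epsilon$ keeps $\bar x+u+v$ — equivalently $(u,v)$ — inside the region where the prox-regular estimate is valid. Working through the prox-regular function $h$, for which Theorem~\ref{7th:2}(ii) supplies both prox-regularity at $(0,0)$ with constants $\bar\epsilon,\rho$ and the formula for $\partial h(0,0)$, is the cleanest route here, since $h(u,v)$ coincides with $f(\bar x+u+v)-\langle\bar g_v,v\rangle$ on the feasible set.

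Item (ii) then follows at once: by (i), $f(\bar x)=L_{\epsilon}(0;\bar g_v)$, so the inequality of (iii) reads $L_{\epsilon}(u;\bar g_v)\ge L_{\epsilon}(0;\bar g_v)+\langle\bar g_u,u\rangle-\tfrac{\rho}{2}\normt{u}^2$ for all $u\in\mathcal{U}$ with $\normt{u}\le r$, and since $\rho>0$ this is precisely the defining inequality for $\bar g_u$ to be a proximal subgradient of $L_{\epsilon}(\cdot;\bar g_v)$ at $0$. I expect the only genuine obstacle to be the neighbourhood bookkeeping: verifying simultaneously that the perturbed vectors $\bar g+\tfrac{\rho}{2}v$ and $\bar g+\delta v/\normt{v}$ remain in $\partial f(\bar x)$ — where $\rho\in\,]0,2[$ is used — and that $\bar x+u+v$ stays admissible whenever $\normt{u}\le\sqrt{\bar\epsilon-\epsilon}$ — where $\bar\epsilon>\epsilon$ is used, and which fixes the value of $r$.
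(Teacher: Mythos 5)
Your proposal is correct and takes essentially the same route as the paper: both exploit $\bar{g}\in D_{\epsilon}f$ to tilt the subgradient inside $\partial f(\bar{x})$ along the $\mathcal{V}$-direction and use $\mathcal{U}\perp\mathcal{V}$, the only difference being that the paper tilts by $v$ itself so that the single leftover term $(1-\tfrac{\rho}{2})\normt{v}^2\ge 0$ delivers (iii) and the strictness for $W(0;\bar{g}_v)=\{0\}$ simultaneously, whereas you use the two tilts $\tfrac{\rho}{2}v$ and $\delta v/\normt{v}$. The domain bookkeeping you defer is exactly the paper's one-line check that $\normt{u}^2+\normt{v}^2<\bar{\epsilon}^{\,2}$ when $\normt{u}\le r$ and $\normt{v}\le\epsilon$, so nothing essential is missing.
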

\begin{proof}
Assumption \ref{7as:1} yields 
\begin{equation}\label{med:1}
f(x')\geq f(\bar{x})+\la g,x'-\bar{x}\ra-\frac{\rho}{2}\normt{x'-\bar{x}}^2\text{ for any }g\in\partial f(\bar{x})
\end{equation}
whenever $\normt{x'-\bar{x}}\leq\bar{\epsilon}$. For all $u\in B_{\mathcal{U}}(0,r)$ and $v\in B_{\mathcal{V}}(0,\epsilon)$, one has $\normt{u}^2+\normt{v}^2\in[0,\bar{\epsilon}^2[$. Consequently, \eqref{med:1} holds for $x'=\bar{x}+u+v$, i.e.
\begin{align}\label{7med:5}
f(\bar{x}+u+v)&\geq f(\bar{x})+\mang{g,u+v}-\frac{\rho}{2}(\normt{u}^2+\normt{v}^2)\quad\text{for any }g\in\partial f(\bar{x}). 
\end{align}
By the definition of $\epri\partial f(\bar{x})$, for any $v\in B_{\mathcal{V}}(0,\epsilon)$, there exists a $g'\in\partial f(\bar{x})$ such that $\bar{g}+v=g'$. 
In \eqref{7med:5} we can take $g=g'$ introduced above and get
\begin{align*}
f(\bar{x}+u+v) &\geq f(\bar{x})+\mang{\bar{g}+v,u+v}-\frac{\rho}{2}(\normt{u}^2+\normt{v}^2)\\
 & = f(\bar{x})+\mang{\bar{g}_u,u}-\frac{\rho}{2}\normt{u}^2+\mang{\bar{g}_v,v}+(1-\frac{\rho}{2})\normt{v}^2
\end{align*}
for all \(u\in B_{\mathcal{U}}(0,r)\) and all \(v\in B_{\mathcal{V}}(0,\epsilon)\). Subtracting \(\mang{\bar{g}_v,v}\) on both sides, we have 
\begin{align}\label{7med:2}
f(\bar{x}+u+v)- \mang{\bar{g}_v,v}\geq f(\bar{x})+\mang{\bar{g}_u,u}-\frac{\rho}{2}\normt{u}^2+(1-\frac{\rho}{2})\normt{v}^2.
\end{align}
By the definition of $L_{\epsilon}$, the fact that \(\rho\in]0,2[\) and \eqref{7med:2} we have 
\begin{equation}\label{med:3}
L_{\epsilon}(u;\bar{g}_v)\geq f(\bar{x})+\mang{\bar{g}_u,u}-\frac{\rho}{2}\normt{u}^2,\ \forall \,  u\in B_{\mathcal{U}}(0,r). 
\end{equation}
By definition we know $L_{\epsilon}(0;\bar{g}_v)\leq f(\bar{x}+0)-\mang{\bar{g}_v,0}=f(\bar{x})$. On the other hand, replacing $u$ in \eqref{med:3} by 0 yields $L_{\epsilon}(0;\bar{g}_v)\geq f(\bar{x})$. 
Thus $L_{\epsilon}(0;\bar{g}_v)=f(\bar{x})$. 
To show $W(0;\bar{g}_v)=\{0\}$, suppose for contradiction that there exists \(v'\in W(0;\bar{g}_v)\) but \(v'\not=0\). 
We apply \eqref{7med:2} to \(u=0\) and \(v=v'\) and get \[L_{\epsilon}(0;\bar{g}_v)=f(\bar{x}+v')-\mang{\bar{g}_v,v'}\geq f(\bar{x})+(1-\frac{\rho}{2})\normt{v'}^2>f(\bar{x})=L_{\epsilon}(0;\bar{g}_v).\] A contradiction.

(ii) Replacing \(u\) in \eqref{med:3} by 0 and \(f(\bar{x})\) by \(L_{\epsilon}(0;\bar{g}_v)\) we can see \(\bar{g}_u\in\partial_p L_{\epsilon}(0;\bar{g}_v)\). 
\end{proof}
Define function  \be\label{7:eq:20} \mathbb{R}^n\ni v\mapsto F(v;\bar{g}_v)\mathrel{\mathop:}=f(\bar{x}+v)-\mang{\bar{g}_v,v}+\delta_{B_{\mathcal{V}}(0,\epsilon)}(v),\ee where \(\bar{g}_v\in P_{\mathcal{V}}\mbk{\partial f\mbk{\bar{x}}} \) is a parameter. 

To simplify notation we  sometimes omit the parameter \(\bar{g}_v\) in \(F(v;\bar{g}_v)\) when it does not affect the understanding. Define the function  \(f_{\mathcal{V}}\colon \mathcal{V}\rightarrow\bar{R}\) as \be\label{7:eq:21}f_{\mathcal{V}}(v)=f\mbk{\bar{x}+v}.\ee
Define the function \(q\colon \mathbb{R}^n\rightarrow \bar{R}\) as 
\be\label{7:eq:22}
q(x)\mathrel{\mathop:}= f\mbk{\bar{x}+x}.
\ee
\begin{lemma}
\label{7:lem:u}
As \(f\) is subdifferentially regular at \(\bar{x}\), from the basic chain rule we have \(\partial f_{\mathcal{V}}(0))=P_{\mathcal{V}}\mbk{\partial f\mbk{\bar{x}}}\) and \(\partial q(0)=\partial f\mbk{\bar{x}}\).
\end{lemma}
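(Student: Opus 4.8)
The plan is to realise $q$ and $f_{\mathcal{V}}$ as compositions of $f$ with affine maps and to apply the subdifferential chain rule (10.6 of \cite{Rockafellar1998}), with the subdifferential regularity of $f$ (Assumption \ref{7as:1} together with Proposition \ref{7prop:3}(ii)) converting the generic inclusions into equalities. The first identity is immediate: $q=f\circ T$ with $T(x)=\bar{x}+x$, whose linear part is the identity, so the chain rule applies with a vacuous constraint qualification and gives $\partial q(0)=\partial f(\bar{x})$; equivalently, one simply notes that the difference quotients of $q$ at $0$ are those of $f$ at $\bar{x}$. The same observation shows $q$ is subdifferentially regular at $0$, which I record for the second identity.

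For $\partial f_{\mathcal{V}}(0)=P_{\mathcal{V}}(\partial f(\bar{x}))$, write $f_{\mathcal{V}}=q\circ\iota$, where $\iota\colon\mathcal{V}\hookrightarrow\mathbb{R}^{n}$ is the linear inclusion; its adjoint is the orthogonal projection $P_{\mathcal{V}}$ regarded as a map onto $\mathcal{V}$. The inclusion $P_{\mathcal{V}}(\partial f(\bar{x}))\subseteq\partial f_{\mathcal{V}}(0)$ is unconditional: by regularity $\partial f(\bar{x})=\hat\partial f(\bar{x})=\hat\partial q(0)$, and restricting the Fr\'{e}chet subgradient inequality for $g\in\hat\partial q(0)$ to directions $v\in\mathcal{V}$ — along which $\mang{g,v}=\mang{P_{\mathcal{V}}g,v}$ — shows $P_{\mathcal{V}}g\in\hat\partial f_{\mathcal{V}}(0)\subseteq\partial f_{\mathcal{V}}(0)$. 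For the reverse inclusion I would invoke the chain rule for $q\circ\iota$: since $q$ is regular at $0$, once the constraint qualification
\[
\partial^{\infty}q(0)\cap\mathcal{U}=\{0\}
\]
holds (the only $y\in\partial^{\infty}q(0)$ with $\iota^{*}y=0$ is $y=0$), the chain rule returns $\partial f_{\mathcal{V}}(0)=\iota^{*}\partial q(0)=P_{\mathcal{V}}(\partial f(\bar{x}))$ and, moreover, that $f_{\mathcal{V}}$ is regular at $0$.

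The main obstacle is precisely this constraint qualification, and verifying it is where the defining property of $\mathcal{V}(\bar{x})$ — namely $D_{\epsilon}f\neq\emptyset$, so that $\partial f(\bar{x})$ contains a full $\mathcal{V}$-relative ball $g^{\circ}+\intrr B_{\mathcal{V}}(0,\epsilon)$ — must be used. Concretely, $\partial f(\bar{x})=\hat\partial f(\bar{x})$ is convex (by regularity) and $\epi f$ is Clarke regular at $(\bar{x},f(\bar{x}))$ (Proposition \ref{7prop:3}(i)), so $\partial^{\infty}f(\bar{x})=\{y\colon(y,0)\in N_{\epi f}(\bar{x},f(\bar{x}))\}$ is a closed convex cone; I would combine the convexity of $N_{\epi f}$ with the $\mathcal{V}$-thickness of $\partial f(\bar{x})$ to exclude a nonzero $u\in\partial^{\infty}f(\bar{x})\cap\mathcal{U}$. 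Dually, $D_{\epsilon}f\neq\emptyset$ should be read as placing $0$ in the relative interior of $\dom f_{\mathcal{V}}$, which is the classical relative-interior qualification under which the restriction of a regular (here prox-regular) function to a subspace is exact; a self-contained route avoiding convexity is to apply the convex restriction rule to $f+\tfrac{\rho}{2}\normt{\cdot-\bar{x}}^{2}$, which prox-regularity makes locally convex (13.33--13.36 of \cite{Rockafellar1998}), and then cancel the quadratic. I expect ruling out the extra horizon subgradients that a bare restriction to $\mathcal{V}$ could create to be the one genuinely delicate point; without it the identity holds only up to the usual closure/horizon correction.
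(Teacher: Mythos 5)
Your route is the same one the paper takes: write \(q\) and \(f_{\mathcal V}\) as \(f\) composed with affine maps and invoke the basic chain rule (10.6 of \cite{Rockafellar1998}) together with the subdifferential regularity coming from Assumption \ref{7as:1} and Proposition \ref{7prop:3}(ii). Your translation argument for \(\partial q(0)=\partial f\mbk{\bar{x}}\) and your direct verification of the inclusion \(P_{\mathcal V}\mbk{\partial f\mbk{\bar{x}}}\subseteq\partial f_{\mathcal V}(0)\) (restricting the Fr\'echet subgradient inequality to directions in \(\mathcal V\)) are correct. The genuine gap is exactly where you flag it: the hard inclusion \(\partial f_{\mathcal V}(0)\subseteq P_{\mathcal V}\mbk{\partial f\mbk{\bar{x}}}\) rests on the qualification \(\partial^{\infty}f(\bar{x})\cap\mathcal U=\{0\}\), and you never establish it — you only sketch strategies, and the sketches do not hold up. The condition \(D_{\epsilon}f\neq\emptyset\) of Definition \ref{7def:1} (which is what is in force in Section 3, where this lemma sits) does not imply that qualification: take \(f(x_1,x_2)=|x_2|+\delta_{\{0\}}(x_1)\) at \(\bar{x}=0\) with \(\mathcal V\) the \(x_2\)-axis; then \(f\) is convex, hence properly prox-regular, and \(D_{\epsilon}f\neq\emptyset\) for \(\epsilon<1\), yet \(\partial^{\infty}f(0)=\mathbb{R}\times\{0\}=\mathcal U\). (In that example the lemma's conclusion still happens to hold, which shows the raw chain rule with this qualification cannot be the whole argument; one either needs the span-based \(\mathcal V\) of Definition \ref{7def:9} — under which \(\partial f\mbk{\bar{x}}\subset\tilde{g}+\mathcal V\), so in the convex case, say, \(\partial^{\infty}f(\bar{x})\) is the recession cone of \(\partial f\mbk{\bar{x}}\) and lies in \(\mathcal V\) — or a separate argument for the hard inclusion.)

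Your two fallback ideas are also unreliable. First, \(D_{\epsilon}f\neq\emptyset\) is a thickness condition on \(\partial f\mbk{\bar{x}}\) in the \(\mathcal V\)-direction, not a statement that \(0\in\ri\dom f_{\mathcal V}\); the example above has \(0\in\intrr\dom f_{\mathcal V}\) anyway and still violates the qualification. Second, prox-regularity of \(f\) at \(\bar{x}\) for \(\bar{g}\) does not make \(f+\frac{\rho}{2}\normt{\cdot-\bar{x}}^{2}\) locally convex — 13.36 of \cite{Rockafellar1998} only yields monotonicity of an \(f\)-attentive localization of \(\partial f+\rho I\) — so the "apply the convex restriction rule and cancel the quadratic" step is not available as stated. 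To be fair, the paper itself asserts the lemma with a bare appeal to the chain rule and never checks any qualification, so you have correctly isolated the point it glosses over; but as a proof, your proposal is incomplete at precisely that point, and the route you sketch for closing it would need to be replaced by an argument that either uses Definition \ref{7def:9} or proves the missing inclusion directly.
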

\begin{proposition}\label{7:pro:5}
Suppose \(f\) satisfies Assumption \ref{7as:1}. The function \(F(v;\bar{g}_v)\) is 
prox-regular at 0 for all \(\bar{g}_v\in P_{\mathcal{V}}\mbk{\partial f\mbk{\bar{x}}} \). 
\end{proposition}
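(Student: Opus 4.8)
The plan is to follow the template of the proof of Theorem \ref{7th:2}(ii). With $q(x)=f(\bar x+x)$ as in \eqref{7:eq:22} we have $F(v;\bar g_v)=q(v)-\langle\bar g_v,v\rangle+\delta_{B_{\mathcal V}(0,\epsilon)}(v)$. First I would note that $q$ is properly prox-regular at $0$ — it is $f$ shifted by $\bar x$, so Assumption \ref{7as:1} transfers directly — hence $q$ is prox-regular at $0$ and, by Proposition \ref{7prop:3}(ii), subdifferentially regular at $0$, with $\partial q(0)=\partial f(\bar x)$ by Lemma \ref{7:lem:u}. The term $-\langle\bar g_v,\cdot\rangle$ is linear, hence trivially prox-regular and handled by Proposition \ref{7pro:5}(ii); and $\delta_{B_{\mathcal V}(0,\epsilon)}$ is the indicator of a nonempty closed convex set, so it is a proper l.s.c.\ convex function and is prox-regular and subdifferentially regular at $0$. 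One then wants to add the three pieces via 10.9 of \cite{Rockafellar1998} (sum rule for subgradients) and 13.35 of \cite{Rockafellar1998} (prox-regularity of sums), exactly as was done for $h$ in Theorem \ref{7th:2}(ii).

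The point I expect to be the real obstacle is the constraint qualification behind 10.9 and 13.35, namely $\partial^\infty q(0)\cap\bigl(-N_{B_{\mathcal V}(0,\epsilon)}(0)\bigr)=\{0\}$. In Theorem \ref{7th:2}(ii) the corresponding horizon set was $\{0\}$ and there was nothing to verify; here $B_{\mathcal V}(0,\epsilon)$ is a ball living inside the subspace $\mathcal V$, so its normal cone at $0$, computed in $\mathbb R^n$, is all of $\mathcal U$, and the condition becomes $\partial^\infty f(\bar x)\cap\mathcal U=\{0\}$ — which does not follow from Assumptions \ref{7as:1}--\ref{7as:2} in general. So I would either have to extract it from the defining property of $\mathcal V$ in Definition \ref{7def:1} (that $\partial f(\bar x)$ contains a full ball $\bar g+\intrr B_{\mathcal V}(0,\epsilon)$), or route around it.

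To route around it, I would exploit that $\dom F\subseteq B_{\mathcal V}(0,\epsilon)\subseteq\mathcal V$. In the splitting $\mathbb R^n=\mathcal U\times\mathcal V$ one obtains the separable decomposition
\[F(v_u,v_v)=\delta_{\{0\}}(v_u)+\Phi(v_v),\qquad\Phi(w)\mathrel{\mathop:}=f_{\mathcal V}(w)-\langle\bar g_v,w\rangle+\delta_{B_{\mathcal V}(0,\epsilon)}(w)\text{ on }\mathcal V,\]
with $f_{\mathcal V}$ as in \eqref{7:eq:21}. A separable sum of prox-regular functions is prox-regular (readily checked from the defining inequality, which decouples in the two groups of variables) and $\delta_{\{0\}}$ is convex, so it suffices to show $\Phi$ is prox-regular at $0\in\mathcal V$. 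But inside $\mathcal V$ the point $0$ is interior to $B_{\mathcal V}(0,\epsilon)$, so the $\mathcal V$-normal cone of $B_{\mathcal V}(0,\epsilon)$ at $0$ is $\{0\}$ and the analogue of the qualification above holds automatically; hence, by Proposition \ref{7pro:5}(ii) together with 10.9 and 13.35 of \cite{Rockafellar1998}, $\Phi$ is prox-regular at $0$ once $f_{\mathcal V}$ is.

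Finally, $f_{\mathcal V}$ is prox-regular at $0$ because it is the restriction of the prox-regular $f$ to the affine subspace $\bar x+\mathcal V$: in the defining inequality of prox-regularity for $f$ evaluated at two points $\bar x+w,\bar x+w'$ with $w,w'\in\mathcal V$, the displacement $w'-w$ lies in $\mathcal V$, so $\langle g,w'-w\rangle=\langle P_{\mathcal V}g,w'-w\rangle$ and only the $\mathcal V$-component of the subgradient — which is exactly what $\partial f_{\mathcal V}(w)$ records, by Lemma \ref{7:lem:u} and its pointwise analogue — appears; so the same $\bar\epsilon$ and $\rho$ that witness prox-regularity of $f$ at $\bar x$ witness it for $f_{\mathcal V}$ at $0$. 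The subtlety here — and the reason a plain invocation of 13.35 will not do — is that the subgradients of $f_{\mathcal V}$ show up as $\mathcal V$-projections of subgradients of $f$ whose $\mathcal U$-components are not controlled by Assumption \ref{7as:1}; the argument survives only because those uncontrolled components are annihilated in every inner product that occurs, precisely because every displacement in sight lies in $\mathcal V$. Putting the three steps together gives that $F(\cdot;\bar g_v)$ is prox-regular at $0$ for all $\bar g_v\in P_{\mathcal V}(\partial f(\bar x))$.
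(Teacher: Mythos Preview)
Your approach differs from the paper's. The paper stays on $\mathbb R^n$: it verifies directly from the definition that $q(x)=f(\bar x+x)$ is prox-regular at $0$ (the defining inequalities for $f$ at $\bar x$ transfer verbatim under the translation $x'\leftrightarrow v'=x'-\bar x$), and then invokes 13.35 of \cite{Rockafellar1998}, declaring the remainder $-\langle\bar g_v,\cdot\rangle+\delta_{B_{\mathcal V}(0,\epsilon)}$ to be ``smooth around $0$''. You are right that on $\mathbb R^n$ this remainder is not $\mathcal C^2$, and your separable split $F=\delta_{\{0\}}\oplus\Phi$ is a legitimate way to move the question to $\mathcal V$, where the indicator really is identically $0$ near the origin and 13.35 applies cleanly.

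The gap is in your last step, the prox-regularity of $f_{\mathcal V}$. The annihilation you invoke works only inside the inner product $\langle g,w'-w\rangle=\langle P_{\mathcal V}g,w'-w\rangle$; it does \emph{not} touch the localization condition $\normt{g-\bar g}<\bar\epsilon$ that gates the prox-regularity inequality for $f$, and there the full $\mathcal U$-component of $g$ enters. Concretely, for $w\neq0$ and $s\in\partial f_{\mathcal V}(w)$ with $\normt{s-\bar s}$ small you need (i) a lift $g\in\partial f(\bar x+w)$ with $P_{\mathcal V}g=s$, and (ii) that lift to lie $\bar\epsilon$-close to some fixed $\bar g\in\partial f(\bar x)$. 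Point (i) is the chain-rule inclusion $\partial f_{\mathcal V}(w)\subseteq P_{\mathcal V}\,\partial f(\bar x+w)$, which at $w\neq0$ requires precisely the horizon qualification $\partial^\infty f(\bar x+w)\cap\mathcal U=\{0\}$ you set out to avoid (Lemma~\ref{7:lem:u} supplies the equality only at $w=0$); point (ii) is not addressed at all. So as written the restriction argument does not close, and the paper's shortcut of proving prox-regularity of $q$ on all of $\mathbb R^n$ --- where $\partial q(v)=\partial f(\bar x+v)$ with no projection and no lifting needed --- is what actually sidesteps this difficulty.
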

\begin{proof}
To show \(F\) is prox-regular at 0 we only need to show \(q\) is prox-regular at 0 as the function \(-\mang{\bar{g}_v,v}+\delta_{B_{\mathcal{V}}(0,\epsilon)}(v)\) is smooth around \(0\); see 13.35 in \cite{Rockafellar1998}. 
We can easily check that \(q(0)=f\mbk{\bar{x}}\) is finite and \(q(v)\) is locally l.s.c. at 0 from Assumption \ref{7as:1}; 
Also it follows from Lemma \ref{7:lem:u} that \(\partial q(0)=\partial f\mbk{\bar{x}}\).
The basic chain rule reveals \(\partial q(v)\subset \partial f\mbk{\bar{x}+v}\). 
Now we use the definition of prox-regularity to show \(q(v)\) is prox-regular at 0. 
For all \(\bar{g}\in \partial q(0)\), when \(s\in\partial q(v),\ \normt{s-\bar{g}}<\bar{\epsilon},\ \normt{v-0}<\bar{\epsilon},\ q(v)<q(0)+\bar{\epsilon} \), where \(\bar{\epsilon}\) is introduced in Assumption \ref{7as:1}, 
we have \(s\in \partial f\mbk{\bar{x}+v},\ \normt{s-\bar{g}}<\bar{\epsilon},\ \normt{\bar{x}+v-\bar{x}}<\bar{\epsilon},\ f\mbk{\bar{x}+v}<f\mbk{\bar{x}}+\bar{\epsilon}\). 
From the prox-regularity of \(f\) at \(\bar{x}\), for all \(\bar{g}\in \partial f\mbk{\bar{x}}\), we get 
\(f\mbk{x'}\geq f\mbk{\bar{x}+v}+\mang{s,x'-\mbk{\bar{x}+v}}-\frac{\rho}{2}\normt{x'-\mbk{\bar{x}+v}}^2,\ \fa x'\in B\mbk{\bar{x},\bar \epsilon}\). 
As there is a on-to-one correspondent between \(x'\in B\mbk{\bar{x},\bar \epsilon}\) and \(v'\in B\mbk{0,\bar{\epsilon}}\) such that \(x'=\bar{x}+v'\), it follows that 
\(f\mbk{\bar{x}+v'}\geq f\mbk{\bar{x}+v}+\mang{s,v'-v}-\frac{\rho}{2}\normt{v'-v}^2,\ \fa v'\in B\mbk{0,\bar{\epsilon}}\), i.e. 
\(q(v)\geq q(v)+\mang{s,v'-v}-\frac{\rho}{2}\normt{v'-v}^2, \ \fa v'\in B\mbk{0,\bar{\epsilon}}\). 
This finishes the proof of the prox-regularity of \(q \) and hence of \(F\).
\end{proof}
Here we investigate a special property of the function \(F\), tilt-stability, introduced in \cite{poliquin1998tilt}. 
\begin{definition}
A point \(\bar{x}\) is said to give a tilt-stable local minimum of the function \(
f\colon\mathbb{R}^n\mapsto\bar{\mathbb{R}
} 
\)
if \(f(\bar{x})\) is finite and there exists \(\delta\in\mathbb{R}_{+}\) such that the mapping 
\[
M\colon g\mapsto\argmin\limits_{\normt{x-\bar{x}}\leq \delta}\mcb{f(x)-f(\bar{x})-\mang{g,x-\bar{x}}}
\]
is sing-valued and Lipschitzian on some neighborhood of \(g=0\) with \(M(0)=\bar{x}\).
\end{definition}
\begin{proposition}\label{7:prop:6}
Under Assumptions \ref{7as:1} and \ref{7as:2}, \(0\) gives a tilt-stable local minimum of \(F(v;\bar{g}_v)\) for all \(\bar{g}_v\in P_{\mathcal{V}}\mbk{\partial f\mbk{\bar{x}}} \), where \(F\) is defined in \eqref{7:eq:20}. 
\end{proposition}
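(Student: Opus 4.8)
The plan is to deduce tilt stability from the criterion of Poliquin and Rockafellar \cite{poliquin1998tilt}: if a function is prox-regular and subdifferentially continuous at a point for $0$ in its limiting subdifferential and has a quadratic growth there, then that point gives a tilt-stable local minimum. (For a prox-regular function this can also be seen by a convexification trick: adding $\frac{r}{2}\normt{\cdot}^2$ for $r$ slightly above the prox parameter makes the function locally convex and still quadratically growing with modulus exceeding $r$, which for convex functions is equivalent to tilt stability; subtracting $\frac{r}{2}\normt{\cdot}^2$ back then preserves tilt stability precisely because that modulus was larger than $r$.) For $F(\cdot;\bar{g}_v)$ at $0$ the prox-regularity is already in hand by Proposition \ref{7:pro:5}; subdifferential continuity at $0$ for $0$ is inherited from $f$, hence from $q$, through the proper prox-regularity in Assumption \ref{7as:1}, since it survives the addition of the smooth term $-\mang{\bar{g}_v,\cdot}$ and of the indicator of the convex set $B_{\mathcal{V}}(0,\epsilon)$. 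Finally, since $\dom F\subset\mathcal{V}$ one has $\mang{g,v}=\mang{P_{\mathcal{V}}(g),v}$ whenever $F(v)<\infty$, so the mapping $M$ in the definition of tilt stability factors through $P_{\mathcal{V}}$ and the flatness of $F$ along $\mathcal{U}$ is harmless. Two items thus remain: $0\in\partial F(0)$, and a quadratic growth estimate for $F$ at $0$.

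For the first I would argue directly from prox-regularity rather than through a sum rule. Taking $g\in\partial f(\bar{x})$ with $P_{\mathcal{V}}(g)=\bar{g}_v$, the inequality supplied by Assumption \ref{7as:1} at $\bar{x}$ for $g$, evaluated at $\bar{x}+v$ with $\mang{g,v}=\mang{\bar{g}_v,v}$ for $v\in\mathcal{V}$, gives $f(\bar{x}+v)-\mang{\bar{g}_v,v}\geq f(\bar{x})-\frac{\rho}{2}\normt{v}^2$, i.e. $F(v;\bar{g}_v)\geq F(0;\bar{g}_v)+\mang{0,v}-\frac{\rho}{2}\normt{v}^2$ for $\normt{v}\leq\epsilon$; hence $0\in\partial_p F(0)\subset\partial F(0)$.

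The core of the proof is the quadratic growth, and for that I would specialize inequality \eqref{7med:2} of Theorem \ref{7th:3} to $u=0$. For the distinguished $\bar{g}\in D_{\epsilon}f$ with $\bar{g}_v=P_{\mathcal{V}}(\bar{g})$, the vector $\bar{g}+v$ lies in $\partial f(\bar{x})$ for every $v\in\intrr B_{\mathcal{V}}(0,\epsilon)$, so applying the inequality of Assumption \ref{7as:1} at $\bar{x}$ for this subgradient and using $\mang{\bar{g},v}=\mang{\bar{g}_v,v}$ yields $f(\bar{x}+v)-\mang{\bar{g}_v,v}\geq f(\bar{x})+(1-\frac{\rho}{2})\normt{v}^2$ on $B_{\mathcal{V}}(0,\epsilon)$, that is $F(v;\bar{g}_v)\geq F(0;\bar{g}_v)+(1-\frac{\rho}{2})\normt{v}^2$ there. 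By Assumption \ref{7as:2} we have $\rho<2$, so the growth constant is positive, and as $F\equiv+\infty$ off $B_{\mathcal{V}}(0,\epsilon)$ this is a genuine quadratic growth on a full neighbourhood of $0$ in $\mathbb{R}^n$. Combined with $0\in\partial F(0)$, the prox-regularity, and the subdifferential continuity of $F$, the criterion recalled above then gives that $0$ is a tilt-stable local minimum of $F(\cdot;\bar{g}_v)$.

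The step I expect to be the real obstacle is this quadratic growth estimate: the extra term $(1-\frac{\rho}{2})\normt{v}^2$ rests on a whole ball of subgradients of the form $\bar{g}_v+v$, $v\in\mathcal{V}$, lying inside $\partial f(\bar{x})$, which is exactly what membership of $\bar{g}_v$ in $P_{\mathcal{V}}(D_{\epsilon}f)$ provides but which degenerates when $\bar{g}_v$ is a relative-boundary point of $P_{\mathcal{V}}(\partial f(\bar{x}))$ (for instance $f(x_1,x_2)=|x_1|$, where at the endpoint of the $\mathcal{V}$-projection $F(\cdot;\bar{g}_v)$ is flat on a half-segment through $0$, so $0$ is not even a unique minimizer). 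Handling such boundary parameters would require replacing this step by an argument exploiting prox-regularity of $f$ at base points $\bar{x}+v$ together with the propagation of prox-regularity to nearby points established earlier; for the $\bar{g}_v$ actually used in the construction of $L_{\epsilon}$ the argument above is complete.
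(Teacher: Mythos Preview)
Your approach differs from the paper's. You invoke a Poliquin--Rockafellar style criterion (prox-regularity plus subdifferential continuity plus quadratic growth implies tilt stability) and then establish the growth estimate $F(v;\bar{g}_v)\geq F(0;\bar{g}_v)+(1-\tfrac{\rho}{2})\normt{v}^2$ by the subgradient-shifting argument underlying Theorem~\ref{7th:3}. The paper instead verifies the definition of tilt stability directly: for any tilt $s$ with $s+\bar{g}_v\in P_{\mathcal{V}}(D_{\epsilon}f)$, the tilted problem $\min_{v\in B_{\mathcal{V}}(0,\epsilon)}\{f(\bar{x}+v)-\langle\bar{g}_v+s,v\rangle\}$ is exactly the problem defining $W(0;g'_v)$ with $g'_v=s+\bar{g}_v$, and Theorem~\ref{7th:3}(i) gives $W(0;g'_v)=\{0\}$. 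Thus $M\equiv 0$ on that set of tilts, which is trivially single-valued and Lipschitz. This direct route recycles Theorem~\ref{7th:3} verbatim and needs no external characterisation.

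There is a genuine gap in your argument at the subdifferential-continuity step. Proper prox-regularity in Assumption~\ref{7as:1} yields subdifferential \emph{regularity} (Proposition~\ref{7prop:3}), not subdifferential \emph{continuity}; the two are distinct, and indeed the paper itself treats subdifferential continuity of $f_{\mathcal{V}}$ as an \emph{additional} hypothesis in the lemma immediately following this proposition. Without it the criterion you cite does not apply, and your convexification sketch does not close the gap either: subtracting $\tfrac{r}{2}\normt{\cdot}^2$ from a tilt-stable convex function does not obviously preserve tilt stability in the way you indicate. The paper's direct computation of $M$ avoids this issue entirely.

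Your final paragraph is well taken: the quadratic-growth argument, and equally the paper's computation of $M$, only covers those $\bar{g}_v$ for which $s+\bar{g}_v\in P_{\mathcal{V}}(D_{\epsilon}f)$ holds for all small $s$, and your example $f(x_1,x_2)=|x_1|$ with $\bar{g}_v$ on the relative boundary shows that $0$ need not even be an isolated minimiser of $F$ there. The paper's own proof shares this lacuna --- its set $E$ is not a neighbourhood of $0$ at such boundary parameters.
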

\begin{proof}
Let \(\bar{g}_v\) be an arbitrary element in \(P_{\mathcal{V}}\mbk{\partial f\mbk{\bar{x}}} \). 
We see \(F(0)=f(\bar{x})\) is finite from Assumption \ref{7as:1}. 
Then \(F(v)=f_{\mathcal{V}}(v)-\mang{\bar{g}_v,v}+\delta_{B_{\mathcal{V}}(0,\epsilon)}(v)\) for all \(v\in\mathcal{V}\), where \(f_{\mathcal{V}}(v)\) is defined in \eqref{7:eq:21}.  
We have \(\partial F(0)=\partial f_{\mathcal{V}}(0)-\bar{g}_v\) from Proposition \ref{7pro:5}.  
 and thus \(\partial F(0)=P_{\mathcal{V}}(\partial f(\bar{x}))-\bar{g}_v\) from Lemma \ref{7:lem:u}. 
Since \(\bar{g}_v\in P_{\mathcal{V}} \mbk{\partial f(\bar{x})}\) we have \(0\in \partial F(0)\). 
Additionally we have \(0\) is a local minimizer of \(F\) because
\[
F(0)=f(\bar{x})=L_{\epsilon}(0;\bar{g}_v)\leq f(\bar{x}+v)-\mang{\bar{g}_v,v}=F(v),\quad\forall \,  v\in B_{\mathcal{V}}(0,\epsilon)
\]
From \(\partial F(0)=\mcb{P_{\mathcal{V}}(\partial f(\bar{x}))-\bar{g}_v}\) we have \(s+\bar{g}_v\in P_{\mathcal{V}}\mbk{\partial f\mbk{\bar{x}}}\) for all \(s\in\partial F(0)\). 
Next we show that the mapping 
\[
M\colon s\mapsto\argmin\limits_{v\in B_{\mathcal{V}}(0,\epsilon)}\mcb{F(v)-F(0)-\mang{s,v}}
\]
is single-valued and Lipschitzian on some neighborhood of \(0\), particularly on the following set, \(E\mathrel{\mathop:}=\mcb{s\in\partial F(0)\colon s+\bar{g}_v\in P_{\mathcal{V}}\mbk{D_{\epsilon}f}}\). 
\begin{align*}
M(s)=\argmin\limits_{v\in B_{\mathcal{V}}(0,\epsilon)}\mcb{F(v)-F(0)-\mang{s,v}}
\\=\argmin\limits_{v\in B_{\mathcal{V}}(0,\epsilon)}\mcb{F(v)-\mang{s,v}}
\\=\argmin\limits_{v\in B_{\mathcal{V}}(0,\epsilon)}\mcb{f(\bar{x}+v)-\mang{\bar{g}_v
+s,v}}
\end{align*}
Because \(s+\bar{g}_v\in P_{\mathcal{V}}\mbk{D_{\epsilon}f}\) for all \(s\in E\), we have \(M(s)=W\mbk{0;g'_v}\equiv 0\) where \(g'\) is an arbitrary element in \(D_{\epsilon}f\). 
Consequently, \(M(s)\) is single-valued and Lipschitzian on \(E\).
\end{proof}
As a byproduct we have the following lemma about strong metrical regularity. 
\begin{lemma}
Suppose  Assumptions \ref{7as:1} and \ref{7as:2} hold. If \(f_{\mathcal{V}}(v)\) is subdifferentially continuous at 0, then the mapping \(\partial F\) is strongly metrically regular at \(\mbk{0,0}\), where \(F\) is defined in proposition \ref{7:pro:5}, for all \(\bar{g}_v\in P_{\mathcal{V}}\mbk{\partial f\mbk{\bar{x}}} \).
\end{lemma}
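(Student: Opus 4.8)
The plan is to read the conclusion off from the tilt-stability already established in Proposition \ref{7:prop:6}, combined with the classical equivalence --- valid for functions that are simultaneously prox-regular and subdifferentially continuous --- between tilt-stability of a local minimum and strong metric regularity of the subgradient mapping at that point (see \cite{poliquin1998tilt}). Recall that $\partial F$ being strongly metrically regular at $(0,0)$ means that $(\partial F)^{-1}$ admits a single-valued, Lipschitz-continuous localization around $(0,0)$; this is exactly the structure carried by the tilt mapping $M$ of Proposition \ref{7:prop:6} once we know $F$ is well enough behaved near $0$.

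First I would fix an arbitrary $\bar{g}_v\in P_{\mathcal{V}}\mbk{\partial f\mbk{\bar{x}}}$ and verify the two local regularity properties of $F=F(\cdot\,;\bar{g}_v)$ at the base point. Prox-regularity of $F$ at $0$ is Proposition \ref{7:pro:5}; moreover $F(0)=f(\bar{x})$ is finite by Assumption \ref{7as:1}, and from the computation in the proof of Proposition \ref{7:prop:6} we have $\partial F(0)=P_{\mathcal{V}}\mbk{\partial f\mbk{\bar{x}}}-\bar{g}_v\ni 0$, so in fact $F$ is prox-regular at $0$ \emph{for the subgradient} $0$. For subdifferential continuity I would use that $0\in\intrr B_{\mathcal{V}}(0,\epsilon)$, so that on a neighbourhood of $0$ in $\mathcal{V}$ the indicator term in \eqref{7:eq:20} vanishes and $F(v)=f_{\mathcal{V}}(v)-\mang{\bar{g}_v,v}$; since subdifferential continuity is a local property and is preserved under addition of a smooth function (subgradients merely shift by the gradient), the hypothesis that $f_{\mathcal{V}}$ is subdifferentially continuous at $0$ --- for the relevant subgradient $\bar{g}_v\in\partial f_{\mathcal{V}}(0)=P_{\mathcal{V}}\mbk{\partial f\mbk{\bar{x}}}$, by Lemma \ref{7:lem:u} --- transfers to: $F$ is subdifferentially continuous at $0$ for $0$.

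With these in hand, Proposition \ref{7:prop:6} provides that $0$ gives a tilt-stable local minimum of $F$. Applying the Poliquin--Rockafellar characterization \cite{poliquin1998tilt} --- for a function that is prox-regular and subdifferentially continuous at a point $\bar{x}$ for $0\in\partial f(\bar{x})$, that point gives a tilt-stable local minimum if and only if $\partial f$ is strongly metrically regular at $(\bar{x},0)$ --- we obtain that $\partial F$ is strongly metrically regular at $(0,0)$. Since $\bar{g}_v$ was arbitrary in $P_{\mathcal{V}}\mbk{\partial f\mbk{\bar{x}}}$, this holds for all such $\bar{g}_v$, which is the claim.

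The main obstacle is the verification, not the invocation: one must ensure that $F$ is subdifferentially continuous at $0$ \emph{precisely for the subgradient} $0$ --- this is the role of the extra hypothesis on $f_{\mathcal{V}}$ --- and check that neither the linear shift by $\bar{g}_v$ nor the (locally inactive) ball constraint damages prox-regularity or subdifferential continuity, which holds because one operation is smooth and the other is locally constant at $0$. If one prefers to avoid citing the equivalence theorem, an alternative is to show directly that the tilt mapping $M\colon s\mapsto\argmin_{v\in B_{\mathcal{V}}(0,\delta)}\mcb{F(v)-F(0)-\mang{s,v}}$ of Proposition \ref{7:prop:6} \emph{is} a single-valued Lipschitz localization of $(\partial F)^{-1}$ around $(0,0)$: for $s$ near $0$ the point $M(s)$ lies in the interior of the ball, so the optimality condition (using smoothness of the linear term) gives $s\in\partial F\mbk{M(s)}$, while conversely prox-regularity of $F$ at $0$ for $0$ together with the uniform quadratic growth accompanying tilt-stability forces any $v$ near $0$ with $s\in\partial F(v)$ to equal $M(s)$.
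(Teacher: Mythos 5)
Your proposal is correct and follows essentially the same route as the paper: verify that $F$ is prox-regular at $0$ for $0$ (Proposition \ref{7:pro:5}), transfer the subdifferential continuity of $f_{\mathcal{V}}$ at $0$ to $F$ using that near $0$ the indicator is inactive and the linear tilt only shifts subgradients, invoke the tilt-stable minimum from Proposition \ref{7:prop:6}, and then apply the equivalence between tilt stability and strong metric regularity of the subdifferential for prox-regular, subdifferentially continuous functions. The only discrepancy is attribution: that equivalence is the result of Drusvyatskiy and Lewis (Proposition 3.1 of \cite{drusvyatskiy2013tilt}), which is what the paper cites, rather than \cite{poliquin1998tilt}.
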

\begin{proof}
We can easily check that \(F\) is l.s.c. 
From Proposition \ref{7:prop:6} 0 gives a tilt-stable local minimum of \(F\) and hence \(0\in\partial F(0)\). 
We first show that \(F\) is subdifferentially continuous at 0 for 0. 
By definition we need to show \(F\mbk{v^k}\rightarrow F(0)\) for all \(v^k\rightarrow 0\) and all \(s^k\rightarrow 0\) with \(s^k\in\partial F\mbk{v^k}\). 
When \(v^k\) is small enough, \(F\mbk{v^k}=f\mbk{\bar{x}+v^k}-\mang{\bar{g}_v,v^k}\) and by Proposition \ref{7pro:5}, \(\partial F\mbk{v^k}=\partial f_{\mathcal{V}}\mbk{v^k}-\bar{g}_v\). 
Hence each sequence \(s^k\) corresponds to a sequence \(p^k\rightarrow \bar{g}_v\) with \(p^k\in\partial f_{\mathcal{V}}\mbk{v^k}\). As \(\partial f_{\mathcal{V}}(0)=P_{\mathcal{V}}\mbk{\partial f\mbk{\bar{x}}}\) from Lemma \ref{7:lem:u} and \(\bar{g}_v\in P_{\mathcal{V}}\mbk{\partial f\mbk{\bar{x}}}\), we have \(\bar{g}_v\in \partial f_{\mathcal{V}}(0)\).
From the subdifferential continuity of \(f_{\mathcal{V}}\) at 0 we have \(f\mbk{\bar{x}+v^k}\rightarrow f\mbk{\bar{x}}\) and therefore \(F\mbk{v^k}\rightarrow f\mbk{\bar{x}}=F(0)\) for all \(\bar{g}_v\).
From Propositions \ref{7:pro:5} and \ref{7:prop:6}, \(F\) is prox-regular at \(0\) for  0 and 0 gives a tilt-stable local minimum of \(F\). 
We can apply Proposition 3.1 of \cite{drusvyatskiy2013tilt} to conclude that \(\partial F\) is strongly metrically regular at \(\mbk{0,0}\).
\end{proof}
Next, we will show that the U-Lagrangian is prox-regular at \(0\). First we give two basic notions in nonsmooth analysis.  
\begin{definition}[monotonicity] A mapping \(T \colon \mathbb{R}^n \rightrightarrows \mathbb{R}^n\) is called monotone
if it has the property that
\[\mang{v_1 - v_0, x_1 -x_0}\geq 0\text{ whenever }v_0 \in T(x_0 ),\ v_1 \in T(x_1 ).\]
\end{definition}
\begin{definition}
For a function \(f\colon\mathbb{R}^n\rightarrow\bar{\mathbb{R}} \), an \(\epsilon>0\) and \(\bar{v}\in\partial f(\bar{x})\), the \(f\)-attentive \(\epsilon\)-localization of \(\partial f\) around \((\bar{x},\bar{v})\) is the mapping \(T\colon\mathbb{R}^n\rightrightarrows \mathbb{R}^n\) defined by 
\begin{equation}
T(x)=\begin{cases}
\mcb{v\in\partial f(x)\colon\normt{v-\bar{v}}<\epsilon} & \text{if }\normt{x-\bar{x}}<\epsilon\text{ and }|f(x)-f(\bar{x})|<\epsilon,\\
 \emptyset & \text{otherwise}.
\end{cases}
\end{equation}
\end{definition}
\begin{proposition}[13.36 of \cite{Rockafellar1998}]
Suppose
\(f\colon\mathbb{R}^n\rightarrow\bar{\mathbb{R}} \) is finite and locally l.s.c. at \(\bar{x}\), and let \(\bar{v}\in\partial f(\bar{x})\) be a proximal
subgradient. Then the following conditions are equivalent:

(a) \(f\) is prox-regular at \(\bar{x}\) for \(\bar{v}\);

(b) \(\partial f\) has an \(f\)-attentive \(\epsilon\)-localization \(T\) around \((\bar{x}, \bar{v})\) for some \(\epsilon\) such that \(T + \rho I\) is
monotone for some \(\rho\in\mathbb{R}_{+}\).
\end{proposition}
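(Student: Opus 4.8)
This is an equivalence, so I would treat the two implications separately: (a)$\Rightarrow$(b) is short, while (b)$\Rightarrow$(a) contains all the difficulty. For (a)$\Rightarrow$(b), fix $\epsilon,\rho$ witnessing prox-regularity and let $T$ be the $f$-attentive $\epsilon$-localization of $\partial f$ around $(\bar x,\bar v)$. Take $x_0,x_1$ in the domain of $T$ and $v_i\in T(x_i)$; then $\|x_i-\bar x\|<\epsilon$, $\|v_i-\bar v\|<\epsilon$, and $|f(x_i)-f(\bar x)|<\epsilon$. Applying the defining inequality of prox-regularity with $(x,v,x')=(x_0,v_0,x_1)$ gives $f(x_1)\ge f(x_0)+\langle v_0,x_1-x_0\rangle-\tfrac{\rho}{2}\|x_1-x_0\|^2$, and the symmetric choice $(x_1,v_1,x_0)$ gives the companion inequality; adding the two and cancelling $f(x_0),f(x_1)$ yields $\langle v_1-v_0,x_1-x_0\rangle\ge-\rho\|x_1-x_0\|^2$, i.e. $T+\rho I$ is monotone. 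That is the entire argument for this direction.

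For (b)$\Rightarrow$(a) I would route through the Moreau envelope, as in Chapter~13 of \cite{Rockafellar1998}. After replacing $f$ by $f+\tfrac{\rho'}{2}\|\cdot-\bar x\|^2$ with an arbitrarily small $\rho'>0$ if necessary we may assume $\rho>0$; fix $\lambda\in(0,1/\rho)$ and set $\bar w:=\bar x+\lambda\bar v$. Consider $e_\lambda f(w):=\inf_y\{f(y)+\tfrac{1}{2\lambda}\|y-w\|^2\}$ together with the (possibly truncated) proximal subproblem defining it. The key claim is that for $w$ near $\bar w$ this infimum is attained at a unique point $y$ lying in the $\epsilon$-ball on which $T$ is defined, so that its first-order optimality condition $\lambda^{-1}(w-y)\in\partial f(y)$ in fact reads $\lambda^{-1}(w-y)\in T(y)$, equivalently $w\in(I+\lambda T)(y)$. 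Since $I+\lambda T=(1-\lambda\rho)I+\lambda(\rho I+T)$ with $\rho I+T$ monotone, the resolvent $(I+\lambda T)^{-1}$ is single-valued and Lipschitz with constant $(1-\lambda\rho)^{-1}$ on its domain; hence the proximal mapping $p(w):=y$ is single-valued and Lipschitz near $\bar w$ with $p(\bar w)=\bar x$, and $e_\lambda f$ is continuously differentiable with Lipschitz gradient $\nabla e_\lambda f(w)=\lambda^{-1}(w-p(w))$ on a neighborhood of $\bar w$.

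Granting this, (a) follows by de-regularising: for $x$ near $\bar x$ and $v\in T(x)$ one has $p(x+\lambda v)=x$, hence $e_\lambda f(x+\lambda v)=f(x)+\tfrac{\lambda}{2}\|v\|^2$ and $\nabla e_\lambda f(x+\lambda v)=v$. Combining the universal inequality $f(x')\ge e_\lambda f(w)-\tfrac{1}{2\lambda}\|x'-w\|^2$, taken with $w=x'+\lambda v$, with the quadratic minorant of $e_\lambda f$ coming from its Lipschitz gradient (legitimate since $x'+\lambda v$ is near $\bar w$) produces $f(x')\ge f(x)+\langle v,x'-x\rangle-\tfrac{L}{2}\|x'-x\|^2$ for all $x'$ in a (possibly smaller) ball around $\bar x$, where $L$ is the Lipschitz modulus of $\nabla e_\lambda f$; this is exactly prox-regularity of $f$ at $\bar x$ for $\bar v$, with a new $\epsilon$ and with $\rho$ replaced by $L$.

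The main obstacle is entirely inside (b)$\Rightarrow$(a): one must calibrate all the radii so that (i) the proximal subproblem is coercive enough that its minimizers cannot reach the boundary of the truncation ball --- this is precisely where the hypothesis that $\bar v$ be a \emph{proximal} subgradient enters, supplying the initial quadratic lower bound that, together with $\lambda\rho<1$, pins the minimizer near $\bar x$ --- and (ii) every point $y$ and subgradient $\lambda^{-1}(w-y)$ that appears genuinely satisfies the three location conditions of the $f$-attentive $\epsilon$-localization, so that the monotonicity of $T+\rho I$, and not of some unavailable global operator, can legitimately be invoked. Establishing the single-valuedness and Lipschitz continuity of the localized resolvent and the $C^{1+}$ regularity of $e_\lambda f$ is the substantive work; since it is carried out in full in Chapter~13 of \cite{Rockafellar1998}, in this paper the result is simply quoted. (An alternative to the envelope route is to argue that monotonicity of the localized subdifferential of $f+\tfrac{\rho}{2}\|\cdot\|^2$ forces that function to be locally convex, but this again requires a local monotonicity-integration theorem of comparable depth.)
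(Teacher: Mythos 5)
The paper offers no proof of this proposition: it is imported verbatim as Theorem 13.36 of \cite{Rockafellar1998}, so there is no internal argument to compare yours against, only the citation. Your outline is nonetheless a faithful reconstruction of the textbook proof. The direction (a)$\Rightarrow$(b) as you give it --- writing the prox-regularity inequality twice with the roles of $(x_0,v_0)$ and $(x_1,v_1)$ exchanged and adding --- is complete and correct, and it visibly preserves the same $\epsilon$ and $\rho$, which is precisely the observation the paper records in the Remark immediately following the proposition. The direction (b)$\Rightarrow$(a) is only a sketch: the single-valuedness and Lipschitz continuity of the localized resolvent (equivalently of the truncated proximal mapping), the $\mathcal{C}^{1+}$ regularity of the Moreau envelope near $\bar{x}+\lambda\bar{v}$, and the calibration of radii --- where, as you note, the proximal-subgradient hypothesis is what keeps the minimizer of the truncated subproblem off the boundary and inside the region where the $f$-attentive localization applies --- are asserted rather than established. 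But you identify correctly that this is where the substance lies and that it is carried out in Chapter 13 of \cite{Rockafellar1998}, which is exactly what the paper itself relies on by quoting the result.
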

\begin{remark}\label{7rem:1}
A careful examination of 13.36 in \cite{Rockafellar1998}] shows that if \(f\) is prox-regular at \(\bar{x}\) for \(\bar{v}\) with respect to some \(\epsilon \) and \(\rho\) then \(\partial f\) has an \(f\)-attentive \(\epsilon\)-localization \(T\) around \((\bar{x}, \bar{v})\) for the same \(\epsilon\) such that \(T + \rho I\) is
monotone for the same \(\rho\).
\end{remark}
\begin{asump}\label{7as:3}
 Given \(r=\sqrt{\bar{\epsilon}-\epsilon}\) and 
\begin{equation}
\Theta\mathrel{\mathop:}=\{u\in \textrm{int } B_{\mathcal{U}}(0,r)\colon |L_{\epsilon}(u;\bar{g}_v)-f(\bar{x})|<r\},
\end{equation} there exists a constant \(c\in\mathbb{R}_+\) such that for any \(u_i\in\Theta\), \(i\in\{1,2\}\) and any \(v_i\in W(u_i;\bar{g}_v)\),
\begin{equation}\label{7:1eq:metric}
\normt{v_1-v_2}\leq c\normt{u_1-u_2}.
\end{equation}
\end{asump}
\begin{theorem}\label{7th:4}
If Assumptions \ref{7as:1}-\ref{7as:3} hold then $L_{\epsilon}$ is prox-regular at 0 for \(\bar{g}_u\).
\end{theorem}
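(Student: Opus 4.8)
The plan is to verify condition (b) of the monotone‑localization characterization of prox‑regularity (13.36 of \cite{Rockafellar1998}) for \(L_{\epsilon}(\cdot\,;\bar{g}_v)\) at \(0\) for \(\bar{g}_u\). By Theorem \ref{7th:2}(iii) and Theorem \ref{7th:3}(i), \(u\mapsto L_{\epsilon}(u;\bar{g}_v)\) is proper, l.s.c., and finite at \(0\) with \(L_{\epsilon}(0;\bar{g}_v)=f(\bar{x})\); by Theorem \ref{7th:3}(ii), \(\bar{g}_u\) is a proximal subgradient of \(L_{\epsilon}(\cdot\,;\bar{g}_v)\) at \(0\). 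Hence it remains to exhibit an \(L_{\epsilon}\)-attentive \(\delta\)-localization \(T\) of \(\partial L_{\epsilon}(\cdot\,;\bar{g}_v)\) around \((0,\bar{g}_u)\) together with a constant \(\rho'\geq 0\) for which \(T+\rho' I\) is monotone.

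On the product side, Theorem \ref{7th:2}(ii) gives that \(h\) is prox‑regular at \((0,0)\) with respect to \(\bar{\epsilon}\) and \(\rho\); since \(\bar{g}=\bar{g}_u+\bar{g}_v\in\partial f(\bar{x})\), we have \((\bar{g}_u,0)\in\partial h(0,0)\), and \eqref{7med:2} together with \(\rho<2\) exhibits \((\bar{g}_u,0)\) as a proximal subgradient of \(h\). By Remark \ref{7rem:1}, \(\partial h\) then admits an \(h\)-attentive \(\bar{\epsilon}\)-localization \(T_h\) around \(\mbk{(0,0),(\bar{g}_u,0)}\) with \(T_h+\rho I\) monotone. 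I would then set \(\delta\mathrel{\mathop:}=\min\{r,\bar{\epsilon}\}\) with \(r=\sqrt{\bar{\epsilon}-\epsilon}\) and define, for \(\normt{u}<\delta\) and \(|L_{\epsilon}(u;\bar{g}_v)-f(\bar{x})|<\delta\),
\[
T(u)\mathrel{\mathop:}=\mcb{s\in\partial L_{\epsilon}(u;\bar{g}_v)\colon\normt{s-\bar{g}_u}<\delta},
\]
so that the activity region of \(T\) lies inside the set \(\Theta\) of Assumption \ref{7as:3}.

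The bridge between the two sides is Theorem \ref{7th:2}(iv): each \(s\in T(u)\) is matched by some \(\hat{v}\in W(u;\bar{g}_v)\) with \((s,0)\in\partial h(u,\hat{v})\), and since \(\hat{v}\) is a \(\mathcal{V}\)-minimizer one has \(h(u,\hat{v})=L_{\epsilon}(u;\bar{g}_v)\). I would check that the matched pair lies in the activity region of \(T_h\): indeed \(\normt{(s,0)-(\bar{g}_u,0)}=\normt{s-\bar{g}_u}<\delta\leq\bar{\epsilon}\), \(|h(u,\hat{v})-f(\bar{x})|=|L_{\epsilon}(u;\bar{g}_v)-f(\bar{x})|<\delta\leq\bar{\epsilon}\), and, using \(\normt{\hat{v}}\leq\epsilon\), \(\normt{(u,\hat{v})}^2=\normt{u}^2+\normt{\hat{v}}^2<\delta^2+\epsilon^2\leq r^2+\epsilon^2=\bar{\epsilon}^2\); hence \((s,0)\in T_h(u,\hat{v})\).

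For monotonicity, take \(s_i\in T(u_i)\), \(i=0,1\), with matched \(\hat{v}_i\in W(u_i;\bar{g}_v)\) as above; since \(u_0,u_1\in\Theta\), Assumption \ref{7as:3} yields \(\normt{\hat{v}_1-\hat{v}_0}\leq c\normt{u_1-u_0}\). Applying monotonicity of \(T_h+\rho I\) to the pairs \(\mbk{(u_i,\hat{v}_i),(s_i,0)}\) and expanding the inner product over \(\mathcal{U}\times\mathcal{V}\) (the \(\mathcal{V}\)-parts of the subgradients vanish) gives
\[
\mang{s_1-s_0,u_1-u_0}+\rho\normt{u_1-u_0}^2+\rho\normt{\hat{v}_1-\hat{v}_0}^2\geq 0,
\]
and substituting the Lipschitz bound gives \(\mang{s_1-s_0,u_1-u_0}+\rho(1+c^2)\normt{u_1-u_0}^2\geq 0\). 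Thus \(T+\rho(1+c^2)I\) is monotone, and \(L_{\epsilon}\) is prox‑regular at \(0\) for \(\bar{g}_u\) by 13.36 of \cite{Rockafellar1998}. The main obstacle is precisely this passage through the \(\mathcal{V}\)-minimizers: without the estimate \eqref{7:1eq:metric} the term \(\rho\normt{\hat{v}_1-\hat{v}_0}^2\) need not be \(O(\normt{u_1-u_0}^2)\), so controlling it — and the careful bookkeeping that the matched pairs \(\mbk{(u_i,\hat{v}_i),(s_i,0)}\) really fall inside the monotone localization \(T_h\) of \(\partial h\) — is the delicate part of the argument.
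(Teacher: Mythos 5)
Your proposal is correct and follows essentially the same route as the paper's proof: pass from the prox-regularity of \(h\) at \((0,0)\) for \((\bar{g}_u,0)\) to a monotone \(h\)-attentive localization via Remark \ref{7rem:1}, bridge each \(s\in\partial L_{\epsilon}(u;\bar{g}_v)\) to \((s,0)\in\partial h(u,\hat{v})\) with \(\hat{v}\in W(u;\bar{g}_v)\) by Theorem \ref{7th:2}(iv), verify the matched pairs lie in the localization's activity region, and use Assumption \ref{7as:3} to conclude that the \(L_{\epsilon}\)-attentive localization plus \(\rho(1+c^2)I\) is monotone. The only differences are cosmetic (your radius \(\min\{r,\bar{\epsilon}\}\) versus the paper's \(r\)), and you even reproduce the paper's own bookkeeping step \(r^2+\epsilon^2=\bar{\epsilon}^2\).
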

\begin{proof}
As \(\bar{g}\in D_{\epsilon}f\), from the definition of \(D_{\epsilon}f\) in Definition \ref{7def:1} we have \(\bar{g}\in\partial f(\bar{x})\) From Theorem \ref{7th:2}(ii) we have 
\(h(u,v)\) is prox-regular at \((0,0)\) for \((\bar{g}_u,0)\) with respect to \(\bar{\epsilon}\) and \(\rho\). By Remark \ref{7rem:1} \(\partial h\) has an \(h\)-attentive \(\bar{\epsilon}\)-localization \(T\) around \(\mbk{\mbk{0,0},\mbk{\bar{g}_u,0}}\) such that \(T+\rho I\) is monotone. By definition we have 
\begin{equation}
T(u,v)=\begin{cases}
\mcb{\mbk{w,z}\in \partial h(u,v)\colon \normt{\mbk{w,z}-\mbk{\bar{g}_u,0}}<\bar{\epsilon}} & \text{if }\normt{\mbk{u,v}}<\bar{\epsilon}\text{ and } |h(u,v)-h(0,0)|<\bar{\epsilon}, \\
\emptyset & \text{otherwise.}
\end{cases}
\end{equation}
The monotonicity of \(T+\rho I\) means
\begin{align}\label{7:1eq:mono}
\mang{\mbk{w_1,z_1}-\mbk{w_0,z_0},\mbk{u_1,v_1}-\mbk{u_0,v_0}}+\rho\normt{\mbk{u_1,v_1}-\mbk{u_0,v_0}}^2\geq 0\\\text{ whenever }\mbk{w_0,z_0}\in T\mbk{u_0,v_0},\mbk{w_1,z_1}\in T\mbk{u_1,v_1}.
\end{align}
Consider the \(L_{\epsilon}\)-attentive \(r\)-localization \(S\) of \(\partial L_{\epsilon}\) around \(\mbk{0,\bar{g}_u}\),
\begin{equation}
S(u)=\begin{cases}
\mcb{s\in\partial L_{\epsilon}\mbk{u;\bar{g}_v}\colon\normt{s-\bar{g}_u}<r} &\text{if }\normt{u}<r\text{ and }|L_{\epsilon}\mbk{u;\bar{g}_v}-L_{\epsilon}\mbk{0;\bar{g}_u}|<r,\\
\emptyset & \text{otherwise}.
\end{cases}
\end{equation}
To show $L_{\epsilon}$ is prox-regular at 0 for \(\bar{g}_u\), it suffices to show that there exists \(\hat{\rho}\in\mathbb{R}_+\) such that \(S+\hat{\rho}I\) is monotone. 
Consider any \(s_0\in S\mbk{u_0}\) and \(s_1\in S\mbk{u_1}\). 
As \(S\mbk{u_i}\not=\emptyset\) we have \(\normt{u_i}<r\) and \(|L_{\epsilon}\mbk{u_i;\bar{g}_u}-L_{\epsilon}\mbk{0;\bar{g}_u}|<r\) for \(i\in\mcb{0,1}\). 
Additionally, \(\normt{s_i-\bar{g}_u}<r\) and \(s_i\in\partial L_{\epsilon}\mbk{u_i;\bar{g}_v}\). 
By Theorem \ref{7th:2}(iv), there exist \(\hat{v}_i\in W\mbk{u_i;\bar{g}_v}\), \(i\in\mcb{0,1}\) such that \(\mbk{s_i,0}\in\partial h\mbk{u_i,\hat{v}_i}\). 
Next we show \(\mbk{s_i,0}\in\partial T\mbk{u_i,\hat{v}_i}\). 
First, \(\normt{\mbk{s_i,0}-\mbk{\bar{g}_u,0}}=\normt{s_i-\bar{g}_u}<r<\bar{\epsilon}\). 
Second, \(\normt{(u_i,\hat{v}_i)}^2=\normt{u_i}^2+\normt{\hat{v}_i}^2<r^2+\epsilon^2=\bar{\epsilon}^2\).
Third, \(|h\mbk{u_i,\hat{v}_i}-h(0,0)|=|L_{\epsilon}\mbk{u_i;\bar{g}_u}-L_{\epsilon}\mbk{0;\bar{g}_u}|<r<\bar{\epsilon}\).
Consequently, \(\mbk{s_i,0}\) and \(\mbk{u_i,\hat{v}_i}\) satisfy \eqref{7:1eq:mono}, i.e.
\begin{align}
\mang{s_1-s_0,u_1-u_0}+\rho\normt{u_1-u_0}^2+\rho\normt{\hat{v}_1-\hat{v}_0}^2\geq 0.
\end{align}
Combining \eqref{7:1eq:metric} in Assumption \ref{7as:3} we get \(S+\rho\mbk{1+c^2}I\) is monotone.
\end{proof}
\section{UV decomposition}
In Definition \ref{7def:1} we have defined the subspace to be any subspace such that the set \(D_\epsilon f\) is nonempty. Under this definition we have showed Theorem \ref{7th:3}. However, this definition can be too generic. To get nicer properties of the \(U\)-Lagrangian such as differentiability we follow the definition in \cite{lemarechal2000U-Lagrangian}, where the \(U\)-Lagrangian of a convex function was defined. 
\begin{definition}\label{7def:9}
Given a proper, l.s.c. function \(f\)  and a point \(\bar{x}\), the \(UV\)-decomposition of \(\mathbb{R}^n\) at \(\bar{x}\) is defined by
\begin{equation}
\mathcal{V}(\bar{x})=\spn(\partial f(\bar{x})-\tilde{g} ),\quad \mathcal{U}(\bar{x})=\mathcal{V}\mbk{\bar{
x}}^\bot
\end{equation}
where \(\tilde{g}\) is an arbitrary subgradient in \(\partial f(\bar{x})\).
\end{definition}\label{7def:9}
From now on we replace the subspaces defined in Definition \ref{7def:1} by \(\mathcal{V}\mbk{\bar{
x}} \) and \(\mathcal{U}\mbk{\bar{x}}\) in Definition \ref{7def:9}. To simplify notation we denote \(\mathcal{V}=\mathcal{V}\mbk{\bar{
x}}\) and \(\mathcal{U}=\mathcal{U}\mbk{\bar{
x}}\). Under this definition the set \(D_{\epsilon}f\) becomes the \(\epsilon\)-relative interior of \(\partial f(\bar{x})\), denoted by \(\epri \partial f(\bar{x})\).
\begin{proposition}\label{7prop:7}
Let \(f\) satisfy Assumption \ref{7as:1}. 
Denote \\\(U'=\mcb{w\in\mathbb{R}^n  \colon df(\bar{x})(-w)=-df\mbk{\bar{x},w }}\).

(i)For all \(g^{\circ}\in \epri \partial f\mbk{\bar{x}}\)
\begin{equation}\label{7eq:utem}
\mcb{w\in\mathbb{R}^n\colon \mang{g-g^{\circ},w}=0\text{ for all } g\in\partial f\mbk{\bar{x}}}=N_{\partial f\mbk{\bar{x}}}\mbk{g^{\circ}}
\end{equation}
and \(N_{\partial f\mbk{\bar{x}}}\mbk{g^{\circ}_1}=N_{\partial f\mbk{\bar{x}}}\mbk{g^{\circ}_2}\) for any \(g^{\circ}_1,\ g^{\circ}_2\in \epri \partial f\mbk{\bar{x}}\);

(ii) \(\mathcal{U}=U'=N_{\partial f\mbk{\bar{x}}}\mbk{g^{\circ}} \).
\end{proposition}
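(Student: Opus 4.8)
The plan is to reduce both parts to convex analysis of the single set $C:=\partial f(\bar x)$. First I would record the structural facts that make this legitimate: under Assumption~\ref{7as:1}, $f$ is properly prox-regular at $\bar x$, hence subdifferentially regular there by Proposition~\ref{7prop:3}(ii); therefore $C=\partial f(\bar x)=\hat\partial f(\bar x)$ is a nonempty closed convex set, its limiting normal cone $N_C$ coincides with the convex-analysis normal cone, and the subderivative is the support function of the subdifferential, $df(\bar x)(w)=\sigma_C(w)=\sup_{g\in C}\langle g,w\rangle$ for all $w$ (see, e.g., 8.30 of \cite{Rockafellar1998}). I would also note, from Definition~\ref{7def:9}, that $\mathcal V$ is the direction space of the affine hull of $C$, so that $C-g^\circ\subseteq\mathcal V$ and in fact $\spn(C-g^\circ)=\mathcal V$ for every $g^\circ\in C$, while for $g^\circ\in\epri\partial f(\bar x)$ one has the stronger inclusion $\intrr B_{\mathcal V}(0,\epsilon)\subseteq C-g^\circ$.

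For part (i), I would fix $g^\circ\in\epri\partial f(\bar x)$ and compute the two sides of \eqref{7eq:utem} separately. The left-hand set equals $\mathcal U$: the inclusion ``$\supseteq$'' is immediate since $C-g^\circ\subseteq\mathcal V=\mathcal U^\perp$, and ``$\subseteq$'' holds because $C-g^\circ\supseteq\intrr B_{\mathcal V}(0,\epsilon)$ spans $\mathcal V$, so any $w$ orthogonal to all of $C-g^\circ$ lies in $\mathcal V^\perp=\mathcal U$. Next, $N_C(g^\circ)=\mathcal U$: if $w\in N_C(g^\circ)$ then $\langle v,w\rangle\le 0$ for every $v\in\intrr B_{\mathcal V}(0,\epsilon)$, and by symmetry of the ball $\langle v,w\rangle=0$ on it, forcing $w\in\mathcal V^\perp=\mathcal U$; conversely $w\in\mathcal U$ gives $\langle g-g^\circ,w\rangle=0\le 0$ for all $g\in C$. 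This proves \eqref{7eq:utem}, and since the computation shows $N_C(g^\circ)=\mathcal U$ independently of which $g^\circ\in\epri\partial f(\bar x)$ is chosen, the equality $N_{\partial f(\bar x)}(g_1^\circ)=N_{\partial f(\bar x)}(g_2^\circ)$ follows.

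For part (ii), the identity $N_{\partial f(\bar x)}(g^\circ)=\mathcal U$ is already established in (i), so it remains to show $U'=\mathcal U$. Using $df(\bar x)=\sigma_C$, the defining condition $df(\bar x)(-w)=-df(\bar x)(w)$ rewrites as $\sup_{g\in C}\langle g,-w\rangle=-\sup_{g\in C}\langle g,w\rangle$, i.e. $\inf_{g\in C}\langle g,w\rangle=\sup_{g\in C}\langle g,w\rangle$; equivalently $\langle\cdot,w\rangle$ is constant on $C$, i.e. $\langle g-g^\circ,w\rangle=0$ for all $g\in C$. By the first computation in (i) this set is exactly $\mathcal U$, so $U'=\mathcal U$. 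I would add the remark that when $\sigma_C(w)=+\infty$ both the equation defining $U'$ and the ``constant on $C$'' condition fail (a support function of a nonempty set cannot take the value $-\infty$), so restricting attention to the finite case is harmless.

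The convex-analysis manipulations above are routine; the only genuinely non-trivial input is the passage from prox-regularity to subdifferential regularity — available as Proposition~\ref{7prop:3}(ii) — together with the support-function representation $df(\bar x)=\sigma_{\partial f(\bar x)}$, which is precisely what forces $df(\bar x)$ to behave like the support function of a convex set and hence makes $U'$ a linear subspace at all. I expect this to be the one point worth spelling out carefully; if one prefers not to cite that representation directly, it can be recovered by observing that regularity makes $df(\bar x)$ a proper lsc sublinear function with $\partial\big(df(\bar x)\big)(0)=\partial f(\bar x)$ and then applying Fenchel duality.
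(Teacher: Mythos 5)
Your proposal is correct and uses essentially the same ingredients as the paper's proof: subdifferential regularity (via Proposition \ref{7prop:3}(ii)) to get convexity of \(\partial f(\bar{x})\) and the convex normal-cone description, the ball \(\intrr B_{\mathcal{V}}(0,\epsilon)\subset\partial f(\bar{x})-g^{\circ}\) furnished by \(g^{\circ}\in\epri\partial f(\bar{x})\) for the nontrivial inclusion, the support-function representation \(df(\bar{x})=\sigma_{\partial f(\bar{x})}\) for \(U'\), and the span structure of \(\mathcal{V}\) for the identification with \(\mathcal{U}\). The only difference is organizational — you show both sides of \eqref{7eq:utem} equal \(\mathcal{U}\) outright (using the ball's symmetry), whereas the paper first proves \eqref{7eq:utem} by a scaled unit-vector argument and identifies everything with \(\mathcal{U}\) only in part (ii) — which is an equally valid bookkeeping of the same argument.
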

\begin{proof}
(i) Take \(g^{\circ}\in\epri \partial f\mbk{\bar{x}}\) and set \(Y=N_{\partial f\mbk{\bar{x}}}\mbk{g^{\circ}}\). 
From Assumption \ref{7as:1} and Proposition \ref{7prop:3} we have \(f\) is subdifferentially regular at \(\bar{x} \). 
Thus  \(\partial f\mbk{\bar{x}}\) is convex and \(Y=\mcb{z\in\mathbb{R}^n
\colon\mang{z,g-g^{\circ}}\leq 0\text{ for all }g\in \partial f\mbk{\bar{x}} }\). 
Thus \(Y\) contains the left-hand side in \ref{7eq:utem}; we only need to show the converse inclusion. Let \(w\in Y\) and \(g\in \partial f\mbk{\bar{x}}\); 
it suffices to prove \(\mang{g-g^{\circ},w}\geq 0 \).
If \(g-g^{\circ}\not=0\) then \(v\mathrel{\mathop:}=-\frac{g-g^{\circ}}{\normt{g-g^{\circ}}}\in\mathcal{V} \) 
and \(\eta v\in B_{\mathcal{V}}(0,\epsilon)\) for all \(\eta\in]0,\epsilon]\). As \(g^{\circ}\in \epri \partial f\mbk{\bar{x}}\) we have \(g^{\circ}+\eta v\in \partial f\mbk{\bar{x}}\). 
The fact that \(w\in Y\) implies that \[
0\geq\mang{ g^{\circ}+\eta v-g^{\circ},w}=-\frac{\eta}{\normt{g-g^{\circ}}}\mang{g-g^{\circ},w}
\]
and therefore \(\mang{g-g^{\circ},w}\geq 0\).
For any \(g^{\circ}_1,\ g^{\circ}_2\in \epri \partial f\mbk{\bar{x}}\subset\partial f\mbk{\bar{x}} \), we have 
\[N_{\partial f\mbk{\bar{x}}}\mbk{g^{\circ}_1}=\mcb{w\in\mathbb{R}^n\colon\mang{g,w}=\mang{g^{\circ}_1,w}=\mang{g^{\circ}_2,w}\text{ for all }g\in \partial f\mbk{\bar{x}}}=N_{\partial f\mbk{\bar{x}}}\mbk{g^{\circ}_2}. \]

(ii) From the regularity of \(f\) we also have \(df\mbk{ \bar{x} }(w)=\sup\mcb{\mang{g,w}\colon g\in\partial f\mbk{\bar{x}}} \) for all \(w\in \mathbb{R}^n \) and 
\begin{equation}\label{7eq:upri}
U'=\mcb{w\in\mathbb{R}^n\colon \sup_{g\in\partial f\mbk{\bar{x}}}\mang{w,g} =\inf_{g\in\partial f\mbk{\bar{x}}}\mang{w,g}}.
\end{equation}
This means for all \(w\in U'\) we have \(\mang{g'-g'',w}=0\) for all \(g',\ g''\in\partial f\mbk{\bar{x}}\). 
From (i) we see \(U'=N_{\partial f\mbk{\bar{x}}}\mbk{g^{\circ}}\).
Now we show \(U'=\mathcal{U}\). Let \(w\in \mathcal{U}\) then \(\mang{w,z}=0\) for any \(z\in\mathcal{V}\). Specifically, \(\mang{w,g-\tilde{g}}=0\) for any \(g\in\partial f\mbk{\bar{x}}\) as \(g-\tilde{g}\in\mathcal{V}\).  Consequently, \(\mang{g'-g'',w}=0\) for all \(g',\ g''\in\partial f\mbk{\bar{x}}\) and \(w\in U'\), meaning \(\mathcal{U}\subset U'\).
Suppose \(w\in U'\) and \(v=\sum_j\lambda_j\mbk{g_j-\tilde{g}}\in\mathcal{V}\) with \(g_j\in \partial f\mbk{\bar{x}}\), then we have from \eqref{7eq:upri} and the fact that \(g^{\circ}\in\epri \partial f\mbk{\bar{x}}\subset\partial f\mbk{\bar{x}}\)
\begin{equation}
\mang{v,w}=\sum_j\lambda_j\mbk{\mang{g_j,w}-\mang{\tilde{g},w}}=0
\end{equation}
and hence \(w\in\mathcal{V}^{\bot}=\mathcal{U}\). This finishes the proof.
\end{proof}
\begin{corollary}\label{7cor:1}
Suppose that Assumption \ref{7as:1} holds. For all \(g\in\partial f\mbk{\bar{x}}\) we have \(g_u=\bar{g}_u\), where \(\bar{g}\in D_{\epsilon}f= \epri \partial f\mbk{\bar{x}}\) is introduced in Definition \ref{7def:1}.
\end{corollary}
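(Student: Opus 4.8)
The plan is to deduce this directly from Proposition \ref{7prop:7}, which has already identified \(\mathcal{U}\) with a normal cone to the subdifferential. The key observation is that \(\bar{g}\in D_{\epsilon}f=\epri\partial f(\bar{x})\), so \(\bar{g}\) is a legitimate choice of the point \(g^{\circ}\) appearing in Proposition \ref{7prop:7}.

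First I would invoke Proposition \ref{7prop:7}(ii) with \(g^{\circ}=\bar{g}\) to write \(\mathcal{U}=N_{\partial f(\bar{x})}(\bar{g})\), and then Proposition \ref{7prop:7}(i), again with \(g^{\circ}=\bar{g}\), to get the explicit description
\[
\mathcal{U}=\mcb{w\in\mathbb{R}^n\colon\mang{g-\bar{g},w}=0\text{ for all }g\in\partial f(\bar{x})}.
\]
Reading this the other way around: for each fixed \(g\in\partial f(\bar{x})\), the vector \(g-\bar{g}\) is orthogonal to every \(w\in\mathcal{U}\), i.e. \(g-\bar{g}\in\mathcal{U}^{\bot}=\mathcal{V}\).

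From \(g-\bar{g}\in\mathcal{V}\) I would conclude \(P_{\mathcal{U}}(g-\bar{g})=0\), and since the projection onto the subspace \(\mathcal{U}\) is linear this gives \(g_u-\bar{g}_u=P_{\mathcal{U}}(g)-P_{\mathcal{U}}(\bar{g})=P_{\mathcal{U}}(g-\bar{g})=0\), hence \(g_u=\bar{g}_u\). The only things to be careful about are bookkeeping points rather than genuine obstacles: one should note that Assumption \ref{7as:1} is exactly what is needed for Proposition \ref{7prop:7} to apply (it supplies subdifferential regularity, hence convexity of \(\partial f(\bar{x})\)), and that under the UV-decomposition of Definition \ref{7def:9} the set \(D_{\epsilon}f\) really is \(\epri\partial f(\bar{x})\), so that membership \(\bar{g}\in D_{\epsilon}f\) legitimately makes \(\bar{g}\) an admissible \(g^{\circ}\). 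There is no hard step here; the statement is essentially an unpacking of Proposition \ref{7prop:7}.
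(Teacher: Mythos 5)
Your proposal is correct and follows essentially the same route as the paper: both deduce the result by unpacking Proposition \ref{7prop:7}'s characterization of \(\mathcal{U}\) as the set of directions orthogonal to all differences of subgradients (you simply instantiate \(g^{\circ}=\bar{g}\), which is legitimate since \(\bar{g}\in\epri\partial f(\bar{x})\)). The only cosmetic difference is the final step: you conclude via \(g-\bar{g}\in\mathcal{U}^{\bot}=\mathcal{V}\) and linearity of \(P_{\mathcal{U}}\), while the paper argues \(\mang{g_u-\bar{g}_u,u}=0\) for all \(u\in\mathcal{U}\); both are fine.
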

\begin{proof}
By Proposition \ref{7prop:7} we see \(\mathcal{U}=\mcb{w\in\mathbb{R}^n\colon \mang{g-g^{\circ},w}=0\text{ for all } g\in\partial f\mbk{\bar{x}}}\). Consequently, for all \(u\in\mathcal{U}\) and all \(g\in \partial f\mbk{\bar{x}}\) we have \(\mang{g,u}=\mang{g^{\circ},u}\). As \(g^{\circ}\) and \(\bar{g}\) are all in \(\partial f\mbk{\bar{x}}\) it follows that \(\mang{g,u}=\mang{\bar{g},u}\).  By the \(UV\)-decomposition, we get \(\mang{g_u,u}=\mang{\bar{g}_u,u}\) and \(\mang{g_u-\bar{g}_u,u}=0\) for all \(u\in \mathcal{U}\). Therefore \(g_u=\bar{g}_u\). 
\end{proof}
Now we show the smoothness property of the U-Lagrangian. 
\begin{proposition}\label{7pro:8}
Suppose Assumptions \ref{7as:1} and \ref{7as:2} hold. Then \(\partial L_{\epsilon}(0;\bar{g}_v)=\mcb{\bar{g}_u}\).
\end{proposition}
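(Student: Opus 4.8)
The plan is to establish the equality by proving the two inclusions separately, reading off almost everything from the results already in place. The inclusion $\bar g_u\in\partial L_\epsilon(0;\bar g_v)$ is immediate: Theorem~\ref{7th:3}(ii) (which applies since Assumptions~\ref{7as:1} and~\ref{7as:2} hold) gives $\bar g_u\in\partial_p L_\epsilon(0;\bar g_v)$, and every proximal subgradient is a limiting subgradient, so $\bar g_u\in\partial L_\epsilon(0;\bar g_v)$.

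For the reverse inclusion I would fix an arbitrary $s\in\partial L_\epsilon(0;\bar g_v)$ and track it down into $\partial h$. By Theorem~\ref{7th:2}(iv) there is some $\hat v\in W(0;\bar g_v)$ with $(s,0)\in\partial h(0,\hat v)$. Theorem~\ref{7th:3}(i) tells us $W(0;\bar g_v)=\{0\}$, so necessarily $\hat v=0$ and $(s,0)\in\partial h(0,0)$. Now I invoke the explicit formula $\partial h(0,0)=\{(g_u,g_v-\bar g_v)\colon g\in\partial f(\bar x)\}$ from Theorem~\ref{7th:2}(ii): writing $(s,0)=(g_u,g_v-\bar g_v)$ for some $g\in\partial f(\bar x)$ and comparing $\mathcal{U}$-components yields $s=g_u$ (the $\mathcal{V}$-components incidentally force $g_v=\bar g_v$). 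Finally, Corollary~\ref{7cor:1} gives $g_u=\bar g_u$ for every $g\in\partial f(\bar x)$, hence $s=\bar g_u$. Combining the two inclusions gives $\partial L_\epsilon(0;\bar g_v)=\{\bar g_u\}$.

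There is essentially no hard step left here; the real work has been front-loaded into the earlier results, namely Theorem~\ref{7th:2} (the subdifferential of $h$ together with the marginal-function chain rule transferring subgradients of the infimal projection $L_\epsilon$ to subgradients of $h$ at a $\mathcal{V}$-minimizer), Theorem~\ref{7th:3}(i) (uniqueness of the $\mathcal{V}$-minimizer at $u=0$, which rests on the strict slack term $(1-\rho/2)\normt{v}^2$ and hence on the condition $\rho\in\,]0,2[$ of Assumption~\ref{7as:2}), and Corollary~\ref{7cor:1} (that the $\mathcal{U}$-component of $\partial f(\bar x)$ collapses to the singleton $\bar g_u$ under the $UV$-decomposition of Definition~\ref{7def:9}). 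The only point needing a little care is to confirm that Theorem~\ref{7th:3}(i) is genuinely applicable, i.e.\ that the hypotheses guarantee $W(0;\bar g_v)=\{0\}$ rather than merely $0\in W(0;\bar g_v)$ — this is exactly what Assumption~\ref{7as:2} secures — and to note that $L_\epsilon$ is proper and l.s.c.\ at $0$ (Theorem~\ref{7th:2}(iii)) so that the subdifferential formulas are meaningful.
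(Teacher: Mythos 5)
Your proposal is correct and follows essentially the same route as the paper: the upper inclusion via the marginal-function subgradient transfer (Theorem~\ref{7th:2}(iv), i.e.\ 10.13 of \cite{Rockafellar1998}), the uniqueness $W(0;\bar g_v)=\{0\}$ from Theorem~\ref{7th:3}(i), the formula for $\partial h(0,0)$ from Theorem~\ref{7th:2}(ii) together with Corollary~\ref{7cor:1}, and the lower inclusion from $\bar g_u\in\partial_p L_{\epsilon}(0;\bar g_v)\subset\partial L_{\epsilon}(0;\bar g_v)$ by Theorem~\ref{7th:3}(ii). No gaps; your added remarks on why Assumption~\ref{7as:2} is needed only make explicit what the paper uses implicitly.
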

\begin{proof}
By Assumption \ref{7as:1} and Theorem \ref{7th:2}(i) we can apply 10.13 of \cite{Rockafellar1998} to deduce 
\(\partial L_{\epsilon}(0;\bar{g}_v)\subset\cup_{\hat{v}\in W\mbk{0;\bar{g}_v}}\mcb{w\colon\mbk{w,0}\in\partial h\mbk{0,\hat{v }}}\). 
From Theorem \ref{7th:3}(i) we have 
\(\partial L_{\epsilon}(0;\bar{g}_v)\subset\mcb{w\colon\mbk{w,0}\in\partial h\mbk{0,0}}\).
Applying Theorem \ref{7th:2}(ii) we get 
\(\partial L_{\epsilon}(0;\bar{g}_v)\subset\mcb{g_u\colon g_v=\bar{g}_v,\ g\in\partial f\mbk{\bar{x}}}\). Combining Corollary \ref{7cor:1} we have \(\partial L_{\epsilon}(0;\bar{g}_v)\subset\mcb{g_u\colon g_v=\bar{g}_v,\ g_u=\bar{g}_u}=\mcb{\bar{g}_u}\). On the other hand, Theorem \ref{7th:3}(ii) gives \(\bar{g}_u\in\partial_p L_{\epsilon}\mbk{0;\bar{g}_v}\subset \partial L_{\epsilon}\mbk{0;\bar{g}_v}\). Consequently, \(\partial L_{\epsilon}(0;\bar{g}_v)=\mcb{\bar{g}_u}\) holds.
\end{proof}
\begin{corollary}\label{7cor:2}
If Assumptions \ref{7as:1}-\ref{7as:3} hold then $L_{\epsilon}$ is prox-regular at 0.
\end{corollary}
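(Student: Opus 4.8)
The plan is simply to chain the two results just established. Proposition \ref{7pro:8} shows that under Assumptions \ref{7as:1} and \ref{7as:2} the (limiting) subdifferential $\partial L_{\epsilon}(0;\bar{g}_v)$ is the singleton $\{\bar{g}_u\}$. By definition, $L_{\epsilon}$ is prox-regular at $0$ precisely when it is prox-regular at $0$ for every subgradient in $\partial L_{\epsilon}(0;\bar{g}_v)$; since that set contains only $\bar{g}_u$, it suffices to verify prox-regularity at $0$ for the single vector $\bar{g}_u$.

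That last point is exactly Theorem \ref{7th:4}: under the additional Assumption \ref{7as:3}, $L_{\epsilon}$ is prox-regular at $0$ for $\bar{g}_u$. Moreover Theorem \ref{7th:3}(ii) already records that $\bar{g}_u\in\partial_p L_{\epsilon}(0;\bar{g}_v)$, so the notion is non-vacuous and the hypotheses of Theorem \ref{7th:4} are genuinely met. Combining, $L_{\epsilon}$ is prox-regular at $0$ for every element of $\partial L_{\epsilon}(0;\bar{g}_v)$, i.e.\ $L_{\epsilon}$ is prox-regular at $0$.

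I expect no real obstacle here: the corollary is a bookkeeping consequence of Proposition \ref{7pro:8} and Theorem \ref{7th:4}. The only step deserving attention is to recall that "prox-regular at a point" quantifies over all subgradients in the subdifferential at that point, and then to use Proposition \ref{7pro:8} to collapse that quantifier down to the single subgradient $\bar{g}_u$ handled by Theorem \ref{7th:4}.
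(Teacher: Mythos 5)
Your argument is correct and matches the paper's own proof: both invoke Proposition \ref{7pro:8} to reduce the subdifferential at $0$ to the singleton $\{\bar{g}_u\}$ and then apply Theorem \ref{7th:4} to that single subgradient. Your explicit remark that prox-regularity at a point quantifies over all subgradients is exactly the bookkeeping the paper leaves implicit.
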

\begin{proof}
From Proposition \ref{7pro:8} we see \(\partial L_{\epsilon}(0;\bar{g}_v)\) is a singleton. The result follows immediately from Theorem \ref{7th:4}.
\end{proof}
\begin{proposition}\label{7pro:9}
Suppose Assumption \ref{7as:1} holds. Define \(f_{\mathcal{U}}\colon \mathcal{U}\mapsto \bar{R}\) as \(f_{\mathcal{U}}(u)=f(\bar{x}+u)\). Then \(f_{\mathcal{U}}\) is strictly differentiable at 0.
\end{proposition}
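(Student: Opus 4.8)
The plan is to obtain strict differentiability of \(f_{\mathcal U}\) at \(0\) from two ingredients: that \(\partial f_{\mathcal U}(0)\) is the singleton \(\{\bar g_u\}\), and that \(f_{\mathcal U}\) is locally Lipschitz around \(0\). Together these yield strict differentiability with \(\nabla f_{\mathcal U}(0)=\bar g_u\) by the standard characterisation of strict differentiability as ``locally Lipschitz plus single-valued limiting subdifferential'' (9.18 of \cite{Rockafellar1998}). Thus the proof reduces to (A) computing \(\partial f_{\mathcal U}(0)\) and (B) proving the Lipschitz property near \(0\).

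For (A): by Assumption \ref{7as:1} and Proposition \ref{7prop:3}(ii), \(f\) is subdifferentially regular at \(\bar x\). Viewing \(f_{\mathcal U}\) as \(u\mapsto f(Au)\) with the affine map \(A\colon\mathcal U\to\mathbb R^n\), \(Au=\bar x+u\), the basic chain rule --- applied exactly as in Lemma \ref{7:lem:u} for \(q\) and \(f_{\mathcal V}\) --- gives \(\partial f_{\mathcal U}(0)=P_{\mathcal U}\bigl(\partial f(\bar x)\bigr)=\{g_u\colon g\in\partial f(\bar x)\}\). By the \(UV\)-decomposition of Definition \ref{7def:9} we have \(\partial f(\bar x)\subseteq\tilde g+\mathcal V\), so projecting onto \(\mathcal U\) collapses it to a single point; equivalently Corollary \ref{7cor:1} says \(g_u=\bar g_u\) for every \(g\in\partial f(\bar x)\). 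Hence \(\partial f_{\mathcal U}(0)=\{\bar g_u\}\), and in particular \(\bar g_u\in\hat\partial f_{\mathcal U}(0)\).

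For (B): proper prox-regularity of \(f\) at \(\bar x\) (Assumption \ref{7as:1}) makes each \(\bar g\in\partial f(\bar x)\) a proximal subgradient, so \(f(x)\ge f(\bar x)+\mang{\bar g,x-\bar x}-\frac{\rho}{2}\normt{x-\bar x}^2\) whenever \(\normt{x-\bar x}\le\bar\epsilon\); restricting to \(x=\bar x+u\) with \(u\in\mathcal U\) gives
\[
f_{\mathcal U}(u)\ge f_{\mathcal U}(0)+\mang{\bar g_u,u}-\frac{\rho}{2}\normt{u}^2,\qquad\normt{u}\le\bar\epsilon,
\]
so \(\bar g_u\in\partial_p f_{\mathcal U}(0)\) and, the quadratic remainder being \(o(\normt{u})\), the lower Fr\'echet estimate \(f_{\mathcal U}(u)-f_{\mathcal U}(0)-\mang{\bar g_u,u}\ge -o(\normt{u})\) holds. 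It remains to supply the matching upper estimate, which I would get by showing \(\partial f_{\mathcal U}\) stays bounded near \(0\): using the Proposition immediately following Lemma \ref{7lem:0}, prox-regularity of \(\epi f\) propagates to the boundary points \((\bar x+u,f(\bar x+u))\), so \(f\) is subdifferentially regular at \(\bar x+u\) for all small \(u\) with \(f(\bar x+u)\) close to \(f(\bar x)\); combining the resulting local chain rule with outer semicontinuity of \(\partial f\), the inclusion \(\partial f(\bar x)\subseteq\tilde g+\mathcal V\), and the quadratic lower bound above, one concludes that \(\partial f_{\mathcal U}(u)\) contracts to \(\{\bar g_u\}\) as \(u\to0\). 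In particular \(\partial^{\infty}f_{\mathcal U}(0)=\{0\}\), which (9.13 of \cite{Rockafellar1998}) forces \(0\in\intrr(\dom f_{\mathcal U})\) and local Lipschitzness of \(f_{\mathcal U}\) around \(0\). With (A) this proves the proposition.

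I expect step (B) to be the main obstacle. Since \(\partial f(\bar x)\) is only required to contain a relative \(\mathcal V\)-ball about \(\bar g\) and to lie in the possibly unbounded slice \(\tilde g+\mathcal V\), the horizon subdifferential \(\partial^{\infty}f(\bar x)\) need not be \(\{0\}\), so one cannot obtain \(\partial^{\infty}f_{\mathcal U}(0)=\{0\}\) by merely restricting it; the argument has to use that every recession direction of \(\partial f(\bar x)\) lies in \(\mathcal V\), together with the prox-regular monotonicity (after adding \(\rho I\)) of the attentive localization of \(\partial f\) near \(\bar x\), to exclude unbounded \(\mathcal U\)-subgradients of \(f_{\mathcal U}\) near \(0\). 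A cleaner route, if it can be made to work, would be to first establish that \(f_{\mathcal U}\) is itself prox-regular at \(0\) for \(\bar g_u\) and then combine prox-regularity with the singleton subdifferential directly.
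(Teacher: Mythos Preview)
Your part (A) is fine and matches the paper's setup: subdifferential regularity of \(f\) at \(\bar x\) plus the chain rule give \(\partial f_{\mathcal U}(0)=P_{\mathcal U}(\partial f(\bar x))\), and Corollary~\ref{7cor:1} collapses this to \(\{\bar g_u\}\).

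The gap is (B). None of the routes you sketch actually delivers local Lipschitzness. To propagate prox-regularity of \(\epi f\) from \((\bar x,f(\bar x))\) to \((\bar x+u,f(\bar x+u))\) via the proposition after Lemma~\ref{7lem:0} you must already know that \(f(\bar x+u)\) is close to \(f(\bar x)\) and that a normal of the form \((\tilde v,-1)\) close to \((\bar g,-1)\) exists at the new point; neither is available before continuity of \(f_{\mathcal U}\) is known. Even granting regularity at \(\bar x+u\), the \(UV\)-decomposition is fixed at \(\bar x\), so there is no reason \(P_{\mathcal U}(\partial f(\bar x+u))\) should be a singleton near \(\bar g_u\); the claim that ``\(\partial f_{\mathcal U}(u)\) contracts to \(\{\bar g_u\}\)'' is an assertion, not an argument. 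And \(\partial^{\infty}f_{\mathcal U}(0)=\{0\}\) cannot be read off from \(\partial f_{\mathcal U}(0)\) being a singleton: horizon subgradients arise from blow-up along \(f\)-attentive sequences \(u^k\to 0\), which you have not controlled. Your alternative ``cleaner route'' via prox-regularity of \(f_{\mathcal U}\) is also not free: in this paper that is only obtained later (Theorem~\ref{7th:4}, Corollary~\ref{7cor:2}) under the extra Assumption~\ref{7as:3}.

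The paper bypasses (B) entirely by invoking a different item of 9.18 in \cite{Rockafellar1998}: for a locally l.s.c.\ function, strict differentiability at \(0\) is equivalent to the regular subderivative being odd, \(\hat d f_{\mathcal U}(0)(-p)=-\hat d f_{\mathcal U}(0)(p)\) for all \(p\). After checking l.s.c.\ of \(f_{\mathcal U}\) at \(0\) and, via the chain rule (10.6 of \cite{Rockafellar1998}), that \(f_{\mathcal U}\) is subdifferentially regular at \(0\) with \(\hat d f_{\mathcal U}(0)=df_{\mathcal U}(0)=df(\bar x)|_{\mathcal U}\), this oddness is exactly the identity \(\mathcal U=U'=\{w:df(\bar x)(-w)=-df(\bar x)(w)\}\) established in Proposition~\ref{7prop:7}(ii). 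No Lipschitz estimate is needed; strict continuity then falls out as a consequence, which is precisely how the paper uses it in Lemma~\ref{7lem:1}.
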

\begin{proof}
The function \(f_{\mathcal{U}}\) is l.s.c. at 0 because \\
\(\liminf_{u\rightarrow 0}f_{\mathcal{U}}(u)=\lim_{\delta\searrow 0}\left[\inf_{u\in B_{\mathcal{U}}(0,\delta)}f_{\mathcal{U}}(u)\right]\geq \lim_{\delta\searrow 0}\left[\inf_{u\in B(0,\delta)}f(\bar{x}+u)\right]=\liminf_{x\rightarrow\bar{x}}f(x)\geq f(\bar{x})=f_{\mathcal{U}}(0)\), where the last inequality holds because \(f\) is l.s.c. at \(\bar{x}\) by Assumption \ref{7as:1}. 
According to 9.18 (g) of \cite{Rockafellar1998} it suffices to show \(\hat{d}f_{\mathcal{U}}(0)(-p)=-\hat{d}f_{\mathcal{U}}(0)(p)\) for all \(p\in \mathcal{U}\). 
From Proposition \ref{7prop:3}(ii) and Assumption \ref{7as:1}, \(f\) is subdifferentially regular at \(\bar{x}\).
On the other hand, \(f_{\mathcal{U}}(u)=f(F(u))\) where \(F(u)=\bar{x}+u\) and \(\nabla F(0)\) is an identity matrix. This implies that the only vector \(y\) with \(\nabla F(0)*y=0 \) is \(y=0\). Consequently, \(f_{\mathcal{U}}\) is  subdifferentially regular at 0 and \(df_{\mathcal{U}}(0)(p)=df(\bar{x})(p)\) for all \(p\in \mathcal{U}\) (see 10.6 of \cite{Rockafellar1998}). 
Thus the equivalence of \(\mathcal{U}\) and \(U'\) in Proposition \ref{7prop:7} implies  
\(\hat{d}f_{\mathcal{U}}(0)(-p)=df_{\mathcal{U}}(0)(-p)=df(\bar{x})(-p)=-df(\bar{x})(p)=-df_{\mathcal{U}}(0)(p)=-\hat{d}f_{\mathcal{U}}(0)(p)\).
\end{proof}
\begin{lemma}\label{7lem:1}
Under Assumptions \ref{7as:1} and \ref{7as:2}, \(L_{\epsilon}\) is strictly continuous at \(0\).
\end{lemma}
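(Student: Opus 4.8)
The plan is to reduce the statement to the subgradient test for local Lipschitz continuity, 9.13 of \cite{Rockafellar1998}: it suffices to show that $L_\epsilon(\cdot\,;\bar g_v)$ is finite and lower semicontinuous on a neighbourhood of $0$ in $\mathcal{U}$ and that $\partial L_\epsilon(\cdot\,;\bar g_v)$ is locally bounded there. Finiteness and continuity near $0$ come by squeezing. Theorem \ref{7th:3}(iii) gives $L_\epsilon(u;\bar g_v)\ge f(\bar x)+\mang{\bar g_u,u}-\frac{\rho}{2}\normt{u}^2$ on $B_{\mathcal{U}}(0,r)$, and putting $v=0$ in \eqref{def:L} gives $L_\epsilon(u;\bar g_v)\le f(\bar x+u)=f_{\mathcal{U}}(u)$, where $f_{\mathcal{U}}$ is finite and continuous near $0$ by Proposition \ref{7pro:9}. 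Since both bounds equal $f(\bar x)$ at $u=0$, $L_\epsilon(\cdot\,;\bar g_v)$ is real-valued on some ball $B_{\mathcal{U}}(0,\delta)$ and continuous at $0$ with value $f(\bar x)$; lower semicontinuity on $\mathcal{U}$ is Theorem \ref{7th:2}(iii).

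The heart of the argument, and the step I expect to be the main obstacle, is local boundedness of $\partial L_\epsilon(\cdot\,;\bar g_v)$ near $0$. I would argue by contradiction: suppose $u_k\to 0$ and $s_k\in\partial L_\epsilon(u_k;\bar g_v)$ with $\normt{s_k}\to\infty$. By Theorem \ref{7th:2}(iv) choose $\hat v_k\in W(u_k;\bar g_v)$ with $(s_k,0)\in\partial h(u_k,\hat v_k)$. Joint lower semicontinuity of $h$ (Theorem \ref{7th:2}(i)), the continuity $L_\epsilon(u_k;\bar g_v)\to f(\bar x)$ just established, and $W(0;\bar g_v)=\{0\}$ (Theorem \ref{7th:3}(i)) force every cluster point of the bounded sequence $(\hat v_k)$ into $W(0;\bar g_v)$, so $\hat v_k\to 0$ and hence $\hat v_k\in\intrr B_{\mathcal{V}}(0,\epsilon)$ for large $k$. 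On that interior set $h(u,\cdot)$ equals $f(\bar x+u+\cdot)$ minus the smooth term $\mang{\bar g_v,\cdot}$, so the sum rule of Proposition \ref{7pro:5} together with the chain rule (as in the proof of Theorem \ref{7th:2}(ii), using subdifferential regularity of $f$) gives $\partial h(u_k,\hat v_k)=\mcb{(g_u,g_v-\bar g_v):g\in\partial f(y_k)}$ with $y_k:=\bar x+u_k+\hat v_k$. Thus $g_k:=s_k+\bar g_v\in\partial f(y_k)$ and $(g_k)_v=\bar g_v$. Since $y_k\to\bar x$ and $f(y_k)=L_\epsilon(u_k;\bar g_v)+\mang{\bar g_v,\hat v_k}\to f(\bar x)$, the pairs $(y_k,g_k)$ exhibit any cluster point $e$ of $g_k/\normt{g_k}$ as a horizon subgradient of $f$ at $\bar x$; and because the $\mathcal{V}$-component of $g_k$ is the fixed vector $\bar g_v$ while $\normt{g_k}\to\infty$, we get $e\in\mathcal{U}$ and $\normt{e}=1$. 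But $f$ is subdifferentially regular at $\bar x$ (Proposition \ref{7prop:3}), so $\partial^\infty f(\bar x)$ is the recession cone of the convex set $\partial f(\bar x)$, which by Definition \ref{7def:9} lies in a translate of $\mathcal{V}$; hence $e\in\partial^\infty f(\bar x)\subseteq\mathcal{V}$, so $e\in\mathcal{U}\cap\mathcal{V}=\mcb{0}$, contradicting $\normt{e}=1$.

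With $L_\epsilon(\cdot\,;\bar g_v)$ finite, lower semicontinuous, and with a bounded subdifferential on a neighbourhood of $0$, 9.13 of \cite{Rockafellar1998} delivers strict continuity there, hence at $0$. The points that need care are the computation of $\partial h$ at an interior minimiser, the $f$-attentive convergence $f(y_k)\to f(\bar x)$ (which is exactly why the continuity of $L_\epsilon$ at $0$ from the first paragraph is needed to land $e$ in $\partial^\infty f(\bar x)$), and the inclusion $\partial^\infty f(\bar x)\subseteq\mathcal{V}$ that ties the horizon subgradient back to the $UV$-splitting.
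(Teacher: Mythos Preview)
Your argument is correct and takes a genuinely different route from the paper. The paper squeezes $L_\epsilon$ on $B_{\mathcal{U}}(0,r)$ between the smooth quadratic lower bound of Theorem~\ref{7th:3}(iii) and the upper bound $f_{\mathcal{U}}$ (strictly differentiable at $0$ by Proposition~\ref{7pro:9}), and then simply asserts that being sandwiched between two functions strictly continuous at $0$ forces $L_\epsilon$ to be strictly continuous at $0$ as well. That inference is not a general fact (e.g.\ $x^2\sin(1/x^2)$ lies between $\pm x^2$ but is not locally Lipschitz at $0$), so your approach via 9.13 of \cite{Rockafellar1998} is the more robust one. Your key extra ingredient is the lift of an unbounded subgradient sequence $s_k$ through Theorem~\ref{7th:2}(iv) to $g_k\in\partial f(y_k)$ with fixed $\mathcal{V}$-component $\bar g_v$; the $f$-attentive convergence $f(y_k)\to f(\bar x)$ then yields a unit vector $e\in\partial^\infty f(\bar x)\cap\mathcal{U}$, while subdifferential regularity of $f$ at $\bar x$ identifies $\partial^\infty f(\bar x)$ with the recession cone of the convex set $\partial f(\bar x)\subseteq\tilde g+\mathcal{V}$ (Definition~\ref{7def:9}) and forces $e\in\mathcal{V}$, giving the contradiction. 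The points you flag as needing care are handled correctly; note in particular that the formula $\partial h(u_k,\hat v_k)=\{(g_u,g_v-\bar g_v):g\in\partial f(y_k)\}$ is exact without regularity of $f$ at $y_k$, since $(u,v)\mapsto\bar x+u+v$ is an affine isomorphism $\mathcal{U}\times\mathcal{V}\to\mathbb{R}^n$ and Proposition~\ref{7pro:5}(ii) handles the linear shift.
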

\begin{proof}
The definition of \(L_{\epsilon}\) suggests \(L_{\epsilon}(u;\bar{g}_v)\leq f(\bar{x}+u)\) for all \(u\in\mathcal{U}\) and therefore \(\limsup_{u\rightarrow 0}L_{\epsilon}(u;\bar{g}_v)\leq \limsup_{u\rightarrow 0}f(\bar{x}+u)\). 
Proposition \ref{7pro:9} entails the continuity of \(f_{\mathcal{U}}\) at 0 which gives \(\limsup_{u\rightarrow 0}f_{\mathcal{U}}(u)=\limsup_{u\rightarrow 0}f(\bar{x}+u)=f_{\mathcal{U}}(0)=f(\bar{x})\). Consequently, we have \(\limsup_{u\rightarrow 0}L_{\epsilon}(u;\bar{g}_v)\leq f(\bar{x})=L_{\epsilon}(0;\bar{g}_v)\). 
On the other hand, \ref{7th:3}(iii) reveals that \(\liminf_{u\rightarrow 0}L_{\epsilon}\mbk{u;\bar{g}_v}\geq f(\bar{x})\) and therefore \(L_{\epsilon}\) is continuous at 0. 
From the proof hitherto we see that 
\[f(\bar{x})+\mang{\bar{g}_u,u}-\frac{\rho}{2}\normt{u}^2\leq L_{\epsilon}\mbk{u;\bar{g}_v}\leq f(\bar{x}+u)\ \forall \,  u\in B_{\mathcal{U}}(0,r), \text{ where }r=\sqrt{\bar{\epsilon}-\epsilon}.\] 
By Proposition \ref{7pro:9} \(f(\bar{x})+u\) is strictly continuous at 0 and hence \(L_{\epsilon}\) is bounded below and above by functions that are strictly continuous at 0. It follows that \(L_{\epsilon}\) must be strictly continuous at 0, too. 
\end{proof}
\begin{theorem}\label{7the:4}
Under Assumptions \ref{7as:1} and \ref{7as:2}, \(L_{\epsilon}\) is strictly differentiable at \(0\) with \(\nabla L_{\epsilon}\mbk{0;\bar{g}_v}=\bar{g}_u\).
\end{theorem}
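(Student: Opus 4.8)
The plan is to combine the two facts already established: $L_{\epsilon}(\cdot;\bar g_v)$ is strictly continuous at $0$ (Lemma \ref{7lem:1}), and its limiting subdifferential there is the single point $\bar g_u$ (Proposition \ref{7pro:8}). The key principle is that for a function that is finite and strictly continuous (i.e.\ locally Lipschitz) around a point, strict differentiability at that point is equivalent to the limiting subdifferential being a singleton, and then the unique subgradient is precisely the strict derivative; this is one of the equivalences in 9.18 of \cite{Rockafellar1998} (the same multi-part result already invoked in the proof of Proposition \ref{7pro:9}). So the proof is in essence an application of that result on the Euclidean space $\mathcal{U}$.

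In more detail, I would first note that the hypotheses needed to apply 9.18 of \cite{Rockafellar1998} at $u=0$ are in place: by Theorem \ref{7th:2}(iii), $L_{\epsilon}(\cdot;\bar g_v)$ is proper and l.s.c.\ on $\mathcal{U}$, and by Theorem \ref{7th:3}(i) we have $L_{\epsilon}(0;\bar g_v)=f(\bar x)$, which is finite under Assumption \ref{7as:1}. Next, Lemma \ref{7lem:1} (which uses Assumptions \ref{7as:1} and \ref{7as:2}) gives that $L_{\epsilon}(\cdot;\bar g_v)$ is strictly continuous at $0$, so we are in the locally Lipschitz regime where the subdifferential characterization of strict differentiability applies. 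Finally, Proposition \ref{7pro:8} (again under Assumptions \ref{7as:1} and \ref{7as:2}) yields $\partial L_{\epsilon}(0;\bar g_v)=\{\bar g_u\}$. Feeding strict continuity together with the singleton subdifferential into 9.18 of \cite{Rockafellar1998} gives that $L_{\epsilon}(\cdot;\bar g_v)$ is strictly differentiable at $0$, with $\nabla L_{\epsilon}(0;\bar g_v)$ equal to the unique element $\bar g_u$ of $\partial L_{\epsilon}(0;\bar g_v)$.

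I do not expect a genuine obstacle in this argument, since the substantive work was carried out in Lemma \ref{7lem:1} and Proposition \ref{7pro:8}. The only point deserving a line of care is that everything should be read relative to $\mathcal{U}$: the function $L_{\epsilon}(\cdot;\bar g_v)$, its effective domain, and its subdifferential all live in $\mathcal{U}$, so "strictly differentiable at $0$" and "$\nabla L_{\epsilon}(0;\bar g_v)=\bar g_u\in\mathcal{U}$" are understood with $\mathcal{U}$ playing the role of the ambient Euclidean space; with this reading the cited equivalence applies verbatim.
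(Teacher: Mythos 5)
Your argument is exactly the paper's proof: the paper also combines Proposition \ref{7pro:8} (singleton subdifferential $\{\bar{g}_u\}$ at $0$) with Lemma \ref{7lem:1} (strict continuity at $0$) and invokes 9.18 of \cite{Rockafellar1998} to conclude strict differentiability with $\nabla L_{\epsilon}(0;\bar{g}_v)=\bar{g}_u$. Your additional remarks about treating $\mathcal{U}$ as the ambient space are a reasonable clarification but do not change the argument.
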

\begin{proof}
By Proposition \ref{7pro:8}, \(L_{\epsilon}\) has only one subgradient at 0. Applying 9.18 of \cite{Rockafellar1998}, we see it suffices to show \(L_{\epsilon}\) is strictly continuous at 0, which is shown in Lemma \ref{7lem:1}.
\end{proof}
\section{Fast track}
In this section, we first extends fast track to the new U-Lagrangian and then we explore its equivalence to partial smoothness. 
\begin{definition}[\(\mathcal{C}^k\) fast track]\label{7def:10}
Let \(\bar{x}\) be a local minimizer of the function \(f\colon \mathbb{R}^n\mapsto \bar{\mathbb{R}}\).   
We say that \(\mcb{\bar{x}+u+v(u)\colon u\in B_{\mathcal{U}}\mbk{0,\delta}}\) is a \(\mathcal{C}^k\) fast track leading to a local minimizer of \(f\) if for all \(u\) small enough\\
(i) \(v\colon \mathcal{U}\mapsto \mathcal{V}\) is a \(\mathcal{C}^k\) function satisfying \(v(u)\in \bigcap\limits_{\bar{g}\in\epri \partial f\mbk{\bar{x}}} W\mbk{u;\bar{g}_v}\); and\\
(ii) there exists \(\hat{g}\in \epri\partial f\mbk{\bar{x}} \) such that \(L_{\epsilon}\mbk{u;\hat{g}_v}\) is a \(\mathcal{C}^k\) function. 
\end{definition}
\begin{proposition}\label{7prop:11}
In Definition \ref{7def:10}, the condition (ii) can be replaced by the following statement,\\
(ii\(^*\)) \(L_{\epsilon}\mbk{u;\bar{g}_v}\) is a \(\mathcal{C}^1\) function for all \(\bar{g}\in \epri\partial f\mbk{\bar{x}}\).
\end{proposition}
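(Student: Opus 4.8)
The plan is to show that, once condition~(i) of Definition~\ref{7def:10} is in force, conditions~(ii) and~(ii$^*$) say the same thing, so that swapping one for the other leaves the class of fast tracks unchanged; everything will flow from a single pointwise identity comparing the $\mathcal U$-Lagrangians built from different parameters along the track. First I would fix $u$ in the $\mathcal U$-neighbourhood of $0$ on which~(i) holds, so that the one vector $v(u)$ belongs to $W(u;\bar g_v)$ for \emph{every} $\bar g\in\epri\partial f(\bar x)$. Since $W(u;\bar g_v)$ is by Definition~\ref{def:1} precisely the set of minimizers of $v\mapsto f(\bar x+u+v)-\mang{\bar g_v,v}$ over $B_{\mathcal V}(0,\epsilon)$, and $L_\epsilon(u;\bar g_v)$ is the corresponding minimal value, this gives at once
\[
L_\epsilon(u;\bar g_v)=f(\bar x+u+v(u))-\mang{\bar g_v,v(u)}\qquad\text{for every }\bar g\in\epri\partial f(\bar x),
\]
and subtracting this relation for two parameters $\bar g,\hat g$ cancels the nonsmooth term and leaves the key difference identity
\[
L_\epsilon(u;\bar g_v)-L_\epsilon(u;\hat g_v)=\mang{\hat g_v-\bar g_v,\,v(u)},
\]
valid on that whole neighbourhood.

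Next I would exploit the regularity of the track: by~(i) the map $v$ is $\mathcal C^k$, so $u\mapsto\mang{\hat g_v-\bar g_v,v(u)}$ --- a fixed linear functional composed with $v$ --- is $\mathcal C^k$, in particular $\mathcal C^1$. Hence any two of the functions $L_\epsilon(\cdot;\bar g_v)$, $\bar g\in\epri\partial f(\bar x)$, differ near $0$ by a $\mathcal C^k$ function; equivalently each equals the single function $u\mapsto f(\bar x+u+v(u))$ minus a $\mathcal C^k$ correction, so the whole family shares one differentiability class. The two implications are then bookkeeping. If~(ii) holds, with $L_\epsilon(\cdot;\hat g_v)\in\mathcal C^k$ for the distinguished $\hat g$, then for an arbitrary $\bar g$ the difference identity writes $L_\epsilon(\cdot;\bar g_v)$ as a sum of two $\mathcal C^k$ functions, so $L_\epsilon(\cdot;\bar g_v)\in\mathcal C^k\subset\mathcal C^1$, and since $\bar g$ was arbitrary, (ii$^*$) follows. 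Conversely, if~(ii$^*$) holds, choosing any $\hat g\in\epri\partial f(\bar x)$ gives $L_\epsilon(\cdot;\hat g_v)\in\mathcal C^1$, which is~(ii) at the level $k=1$ at which the $\mathcal U$-Lagrangian is shown differentiable here (Theorem~\ref{7the:4}); and the common-class observation shows the ``for all'' in~(ii$^*$) and the ``there exists'' in~(ii) carry identical content.

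The step I expect to be the main obstacle is the uniform-in-$u$ handling of the value identity: one has to be sure that~(i) really furnishes, on one fixed $\mathcal U$-neighbourhood of $0$, a common minimizer $v(u)\in B_{\mathcal V}(0,\epsilon)$ of $h(u,\cdot)$ for all parameters simultaneously, so that the two displayed formulas hold on that same neighbourhood and may legitimately be manipulated (and differentiated) term by term; once that is in place, matching up the differentiability classes --- and, for $k>1$, noting that the members of the family differ only by $\mathcal C^k$ terms so that a $\mathcal C^1$ requirement on them determines the shared class --- is routine.
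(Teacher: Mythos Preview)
Your approach is essentially the same as the paper's: both hinge on the single identity
\[
L_{\epsilon}(u;\bar g_v)=L_{\epsilon}(u;\hat g_v)-\mang{\bar g_v-\hat g_v,\,v(u)},
\]
obtained by evaluating each $\mathcal U$-Lagrangian at the common selection $v(u)\in\bigcap_{\bar g}W(u;\bar g_v)$ supplied by condition~(i), and then reading off that the family $\{L_{\epsilon}(\cdot;\bar g_v)\}_{\bar g}$ differs only by the $\mathcal C^k$ term $\mang{\bar g_v-\hat g_v,v(\cdot)}$. The paper in fact only writes out the implication (ii)$\Rightarrow$(ii$^*$); your additional discussion of the converse and of the ``$\exists$ versus $\forall$'' equivalence is correct as far as it goes, but note that the $\mathcal C^k$/$\mathcal C^1$ mismatch between (ii) and (ii$^*$) means the reverse implication genuinely only recovers (ii) at the level $k=1$, exactly as you flag---the paper does not claim more.
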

\begin{proof}
Suppose (ii) holds. Let \(v\mbk{u;\hat{g}_v}\) be an arbitrary element in \(W\mbk{u;\gbarv}\). Then \(L_{\epsilon}\mbk{u;\hat{g}_v}=f\mbk{\bar{x}+u+v\mbk{u;\hat{g}_v}}-\mang{\hat{g}_v,v\mbk{u;\hat{g}_v}}\) and \(f\mbk{\bar{x}+u+v\mbk{u;\hat{g}_v}}-\mang{\hat{g}_v,v\mbk{u;\hat{g}_v}}\) is a \(\mathcal{C}^1\) function of \(u\). 
Taking \(v\mbk{u;\hat{g}_v}\) as the particular \(v(u)\) in condition (i) of Definition \ref{7def:10} 
we have 
\(L_{\epsilon}\mbk{u;\hat{g}_v}=f\mbk{\bar{x}+u+v(u)}-\mang{\hat{g}_v,v(u)}=f\mbk{\bar{x}+u+v(u)}-\mang{\bar{g}_v,v(u)}+\mang{\gbarv-\hat{g}_v,v(u)}=L_{\epsilon}\mbk{u;\bar{g}_v}+\mang{\gbarv-\hat{g}_v,v(u)}\) for all \(\bar{g}\in\epri\partial f\mbk{\bar{x}}\). 
Therefore \(L_{\epsilon}\mbk{u;\bar{g}_v}=L_{\epsilon}\mbk{u;\hat{g}_v}-\mang{\gbarv-\hat{g}_v,v(u)}\) is also a \(\mathcal{C}^1\) function.
\end{proof}
\begin{definition}[\(\mathcal{C}^k\)-manifold]
We say that a set \(\mathcal{M}\subset\mathbb{R}^n\) is a \(\mathcal{C}^k\)-smooth manifold of codimension \(m\) around \(\bar{y}\in\mathcal{M}\) if there is an open set \(Q\subset\mathbb{R}^n\) such that 
\[\mathcal{M}\cap Q=\mcb{y\in Q\colon\phi_i(y)=0,\ i=1,\cdots
,m},\]
where \(\phi_i\) are \(\mathcal{C}^k\) functions with \(\nabla\phi_i\mbk{\bar{y}}\) linearly independent.  
\end{definition}
As this paper will only involve manifolds that has only one chart, a \(\mathcal{C}^1\)-manifold 
 with codimension \(m \) 
can also be defined as the following:\\
\(\mathcal{M}=\mcb{G(z)\colon\  z\in Q}\) where \(Q\) is an open subset of \(\mathbb{R}^n\) and \(G\colon Q\mapsto \mathbb{R}^n\) has surjective derivative throughout \(Q\). 
In this case, it is known that \(T_{\mathcal{M}}\mbk{\bar{y}}=\func{Im}\mbk{\nabla G\mbk{\bar{y}}}\).
\begin{definition}[\(\mathcal{C}^k\)-partly smooth function]\label{7def:13}
Let \(\mathcal{M}\) be a \(\mathcal{C}^k\)-smooth manifold around \(\bar{x}\). We say the function \(h\colon\mathbb{R}^n\rightarrow\bar{\mathbb{R}} \) is \(\mathcal{C}^k\)-partly smooth at \(\bar{x}\) relative to \(\mathcal{M}\) if the following four properties hold:

\indent (i) there is an open neighborhood \(\mathcal{N}\mbk{\bar{x}}\subset\mathbb{R}^n\) such that some \(\mathcal{C}^k\)-smooth function \(g\colon\mathcal{N}\mbk{\bar{x}}\rightarrow\mathbb{R}\) agrees with \(h\) on \(\mathcal{M}\cap \mathcal{N}\mbk{\bar{x}}\);

\indent (ii) at every point close to \(\bar{x}\) in \(\mathcal{M}\), \(h\) is subdifferentially regular and has a subgradient;

\indent (iii) \(N_{\mathcal{M}}\mbk{\bar{x}}=\mathcal{V}\mbk{\bar{x}}\), where \(\mathcal{V}\mbk{\bar{x}}\) is defined in Definition \ref{7def:9};

\indent (iv) \(\partial h\) is continuous at \(\bar{x}\) relative to \(\mathcal{M}\).
\end{definition}
The following proposition is part of Theorem 6.1 of \cite{Lewis2002active}. 
\begin{proposition}\label{7pro:12}
Suppose the function \(h\colon\mathbb{R}^n\rightarrow\bar{\mathbb{R}} \) is \(\mathcal{C}^2\)-partly smooth at the point \(\bar{x}\) relative to the set \(\mathcal{M}\subset\mathbb{R}^n\). 
Define subspaces \(\mathcal{U}=T_{\mathcal{M}}\mbk{\bar{x}}\) and \(\mathcal{V}=N_{\mathcal{M}}\mbk{\bar{x}}\).
Then there exists a function \(v\colon \mathcal{U}\rightarrow\mathcal{V}\) with the following properties:

\indent (i) the function \(v\) is of class \(\mathcal{C}^2\) near the origin;

\indent (ii) for small vectors \(u\in\mathcal{U}\) and \(w\in\mathcal{V}\), \(\bar{x}+u+w\in\mathcal{M}\Leftrightarrow w=v(u)\);

\indent (iii) \(v(u)=\mathcal{O}\mbk{\normt{u}^2}\) for small \(u\in\mathcal{U}\).

\noindent Fix any vector \(y\in \func{ri}\partial h\mbk{\bar{x}}\). Then for any small vector \(u\in\mathcal{U}\), the function
\begin{equation}
w\in\mathcal{V}\mapsto h\mbk{\bar{x}+u+w}-\mang{y,\bar{x}+u+w}
\end{equation}
has a local minimizer at the point \(v(u)\).
\end{proposition}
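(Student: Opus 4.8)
This is part of Theorem~6.1 of \cite{Lewis2002active}, so one option is simply to cite it; I outline the route I would take. The plan is first to produce \(v\) and establish (i)--(iii) by the implicit function theorem, and then to treat the minimizer assertion separately.

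Since \(h\) is \(\mathcal{C}^2\)-partly smooth at \(\bar{x}\) relative to \(\mathcal{M}\), the manifold \(\mathcal{M}\) is \(\mathcal{C}^2\) near \(\bar{x}\): on some open \(Q\ni\bar{x}\) we have \(\mathcal{M}\cap Q=\mcb{x\colon\phi_i(x)=0,\ i=1,\dots,m}\) with \(\phi_i\) of class \(\mathcal{C}^2\) and \(\nabla\phi_i(\bar{x})\) linearly independent. From \(N_{\mathcal{M}}(\bar{x})=\spn\mcb{\nabla\phi_i(\bar{x})}=\mathcal{V}\) I get \(m=\dim\mathcal{V}\) and that \(\mathcal{V}\ni w\mapsto(\mang{\nabla\phi_i(\bar{x}),w})_{i=1}^m\in\mathbb{R}^m\) is a linear bijection. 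Applying the implicit function theorem to \(\Phi(u,w)\mathrel{\mathop:}=(\phi_i(\bar{x}+u+w))_{i=1}^m\) at \((0,0)\), whose partial derivative \(\partial_w\Phi(0,0)\) is precisely this bijection, yields a \(\mathcal{C}^2\) map \(v\colon\mathcal{U}\to\mathcal{V}\) near \(0\) with \(v(0)=0\), \(\Phi(u,v(u))\equiv 0\), and with \(v(u)\) the unique small solution; this is (i) and (ii). Differentiating \(\Phi(u,v(u))\equiv 0\) at \(u=0\) gives \(\partial_u\Phi(0,0)+\partial_w\Phi(0,0)\nabla v(0)=0\); but \(\partial_u\Phi(0,0)=0\) because each \(\nabla\phi_i(\bar{x})\in\mathcal{V}=\mathcal{U}^{\bot}\), so \(\nabla v(0)=0\), and the \(\mathcal{C}^2\) Taylor estimate then gives \(v(u)=\mathcal{O}\mbk{\normt{u}^2}\), which is (iii).

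For the minimizer assertion I would pass to \(e\mathrel{\mathop:}=h-\mang{y,\cdot}\). By Proposition~\ref{7pro:5}, \(\partial e(x)=\partial h(x)-y\) and \(e\) is subdifferentially regular wherever \(h\) is; in particular \(e\) is regular with nonempty subdifferential at every point of \(\mathcal{M}\) near \(\bar{x}\) (property (ii)), \(\partial e\) is inner semicontinuous at \(\bar{x}\) relative to \(\mathcal{M}\) (property (iv)), and \(0\in\ri\partial e(\bar{x})=\ri\partial h(\bar{x})-y\). Moreover the affine hull of \(\partial e(\bar{x})\) equals \(\spn(\partial h(\bar{x})-\tilde g)=\mathcal{V}\) for any \(\tilde g\in\partial h(\bar{x})\) (property (iii)), so there is \(\eta>0\) with \(\mcb{w\in\mathcal{V}\colon\normt{w}\le\eta}\subset\partial e(\bar{x})\). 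Using inner semicontinuity together with a compactness argument over the unit sphere of \(\mathcal{V}\), I would show: there is \(\delta>0\) such that for every \(x\in\mathcal{M}\cap B(\bar{x},\delta)\) and every unit \(z\in\mathcal{V}\) there exists \(g\in\partial e(x)\) with \(\mang{g,z}\ge\eta/4\). Since \(e\) is subdifferentially regular at such \(x\), so that \(g\in\partial e(x)=\hat\partial e(x)\), the subderivative obeys \(de(x)(z)\ge\mang{g,z}\); hence \(de(x)(z)>0\) for all \(0\ne z\in\mathcal{V}\) and all \(x\in\mathcal{M}\cap B(\bar{x},\delta)\).

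Finally, fix a small \(u\in\mathcal{U}\) and set \(x_u\mathrel{\mathop:}=\bar{x}+u+v(u)\in\mathcal{M}\), which lies in \(B(\bar{x},\delta)\) once \(u\) is small. I claim \(w=v(u)\) is a strict local minimizer of \(w\mapsto h(\bar{x}+u+w)-\mang{y,\bar{x}+u+w}\) over \(w\in\mathcal{V}\), which is the assertion since this map differs from \(w\mapsto e(\bar{x}+u+w)\) by a constant. If it were not, there would be \(w_k\to v(u)\) in \(\mathcal{V}\), \(w_k\ne v(u)\), with \(e(\bar{x}+u+w_k)\le e(x_u)\); passing to a subsequence so that \((w_k-v(u))/\normt{w_k-v(u)}\to z\) with \(z\in\mathcal{V}\) and \(\normt{z}=1\), the definition of the subderivative gives \(de(x_u)(z)\le\liminf_k\mbk{e(\bar{x}+u+w_k)-e(x_u)}/\normt{w_k-v(u)}\le 0\), contradicting the last display of the previous paragraph. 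I expect the implicit-function part to be routine; the main obstacle is the uniform positivity ``\(de(x)(z)>0\) for all unit \(z\in\mathcal{V}\) and all \(x\in\mathcal{M}\) near \(\bar{x}\)'', i.e.\ propagating \(0\in\ri\partial e(\bar{x})\) to nearby points of \(\mathcal{M}\); this is exactly where inner semicontinuity of \(\partial h\) along \(\mathcal{M}\) (partial-smoothness property (iv)) must be invoked with care, and it is the substantive content borrowed from \cite{Lewis2002active}.
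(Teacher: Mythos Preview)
The paper does not supply its own proof of this proposition; it simply quotes the result as ``part of Theorem~6.1 of \cite{Lewis2002active}'' and moves on. You correctly identify this, and the sketch you give is essentially the argument in \cite{Lewis2002active}: the implicit function theorem handles (i)--(iii), and for the minimizer claim one shifts by \(y\), uses property (iii) to get \(B_{\mathcal V}(0,\eta)\subset\partial e(\bar x)\), then uses inner semicontinuity of \(\partial h\) along \(\mathcal M\) (property (iv)) plus compactness of the unit sphere in \(\mathcal V\) to propagate \(de(x)(z)>0\) to nearby \(x\in\mathcal M\), whence the strict local minimum follows by the subderivative contradiction. Your outline is correct and matches the cited source; there is nothing further to compare against in the present paper.
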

\begin{theorem}
Suppose \(\bar{x}\) is a local minimizer of the function \(f\colon\mathbb{R}^n\rightarrow\bar{\mathbb{R}} \). 
If \(f\) is \(\mathcal{C}^2\)-partly smooth at \(\bar{x}\) relative to the manifold \(\mathcal{M}\), then \(\mathcal{M}\) contains a \(\mathcal{C}^2\) fast track leading to \(\bar{x}\).
\end{theorem}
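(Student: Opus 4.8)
The plan is to take as the fast track the graph of the $\mathcal{C}^{2}$ map furnished by Proposition \ref{7pro:12} and then verify the two conditions of Definition \ref{7def:10}. First I would apply Proposition \ref{7pro:12} with $h=f$. By Definition \ref{7def:13}(iii) the subspaces $T_{\mathcal{M}}(\bar{x})$ and $N_{\mathcal{M}}(\bar{x})$ appearing there coincide with $\mathcal{U}$ and $\mathcal{V}$ of the $UV$-decomposition of Definition \ref{7def:9}, so I obtain a map $v\colon\mathcal{U}\to\mathcal{V}$, of class $\mathcal{C}^{2}$ near the origin, with $v(u)=\mathcal{O}(\normt{u}^{2})$, with $\bar{x}+u+w\in\mathcal{M}\iff w=v(u)$ for small $u\in\mathcal{U}$ and $w\in\mathcal{V}$, and such that for every $y\in\func{ri}\partial f(\bar{x})$ and every small $u$ the point $v(u)$ is a local minimizer of $w\mapsto f(\bar{x}+u+w)-\mang{y,\bar{x}+u+w}$ over $w\in\mathcal{V}$. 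Dropping the $w$-constant $\mang{y,\bar{x}+u}$ and using $\mang{y,w}=\mang{y_{v},w}$ for $w\in\mathcal{V}$, this says $v(u)$ locally minimizes $\psi_{u}^{y}(w):=f(\bar{x}+u+w)-\mang{y_{v},w}$ over $\mathcal{V}$.

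Next I would record that $\epri\partial f(\bar{x})\subset\func{ri}\partial f(\bar{x})$: since $f$ is subdifferentially regular at $\bar{x}$ (Assumption \ref{7as:1} and Proposition \ref{7prop:3}(ii)), $\partial f(\bar{x})$ is convex with affine hull $\tilde{g}+\mathcal{V}$, so a subgradient lies in $\func{ri}\partial f(\bar{x})$ exactly when it admits a relatively open $\mathcal{V}$-ball of \emph{some} radius inside $\partial f(\bar{x})$, whereas membership in $D_{\epsilon}f=\epri\partial f(\bar{x})$ demands radius $\epsilon$. Hence the previous paragraph applies to every $y=\bar{g}\in\epri\partial f(\bar{x})$, and $\psi_{u}^{\bar{g}}$ is precisely the infimand defining $W(u;\bar{g}_{v})$. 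It remains to upgrade this \emph{local} minimality to minimality over the full ball $B_{\mathcal{V}}(0,\epsilon)$, i.e.\ to show $v(u)\in W(u;\bar{g}_{v})$.

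For that I would argue as follows. By Lemma \ref{7lem:1}, $L_{\epsilon}$ is continuous at $0$, so $u\in\dom L_{\epsilon}$ for small $u$ and, by Theorem \ref{7th:2}(iii), $W(u;\bar{g}_{v})$ is nonempty and compact; let $w_{u}$ be any of its elements. A compactness estimate shows $w_{u}\to0$ as $u\to0$: from $f(\bar{x}+u+w_{u})-\mang{\bar{g}_{v},w_{u}}=L_{\epsilon}(u;\bar{g}_{v})\le f(\bar{x}+u)$ and continuity of $f_{\mathcal{U}}$ at $0$ (Proposition \ref{7pro:9}), any cluster point $\bar{w}$ of $(w_{u})$ satisfies $f(\bar{x}+\bar{w})-\mang{\bar{g}_{v},\bar{w}}\le f(\bar{x})$ by lower semicontinuity of $f$, while Theorem \ref{7th:3}(i) gives $f(\bar{x}+\bar{w})-\mang{\bar{g}_{v},\bar{w}}\ge L_{\epsilon}(0;\bar{g}_{v})=f(\bar{x})$ and $W(0;\bar{g}_{v})=\{0\}$, forcing $\bar{w}=0$. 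Since also $v(u)\to0$, for $u$ small both $w_{u}$ and $v(u)$ lie in the neighborhood on which $v(u)$ minimizes $\psi_{u}^{\bar{g}}$, so $\psi_{u}^{\bar{g}}(w_{u})\ge\psi_{u}^{\bar{g}}(v(u))$; as $w_{u}$ is the global minimizer over $B_{\mathcal{V}}(0,\epsilon)$ this is an equality, whence $v(u)\in W(u;\bar{g}_{v})$. Taking the neighborhoods uniform in $\bar{g}\in\epri\partial f(\bar{x})$ gives the intersection in Definition \ref{7def:10}(i). For Definition \ref{7def:10}(ii), fix any $\hat{g}\in\epri\partial f(\bar{x})=D_{\epsilon}f$, nonempty by Definition \ref{7def:1}; then (i) yields $L_{\epsilon}(u;\hat{g}_{v})=f(\bar{x}+u+v(u))-\mang{\hat{g}_{v},v(u)}$, and since $\bar{x}+u+v(u)\in\mathcal{M}$ and $f$ agrees near $\bar{x}$ on $\mathcal{M}$ with a $\mathcal{C}^{2}$ function $g$ (Definition \ref{7def:13}(i)), $L_{\epsilon}(u;\hat{g}_{v})=g(\bar{x}+u+v(u))-\mang{\hat{g}_{v},v(u)}$ is $\mathcal{C}^{2}$ in $u$. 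As $v(0)=0$, the track $\{\bar{x}+u+v(u)\}\subset\mathcal{M}$ passes through the local minimizer $\bar{x}$, so it is a $\mathcal{C}^{2}$ fast track leading to $\bar{x}$.

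The main obstacle is the local-to-global upgrade in the third paragraph: Proposition \ref{7pro:12} only asserts \emph{local} minimality of $v(u)$, whereas $W$ is defined by minimization over the fixed ball $B_{\mathcal{V}}(0,\epsilon)$. Closing this gap needs the radius of that local minimality to be bounded away from $0$ as $u\to0$ (and uniform in $\bar{g}$), which has to be extracted from the construction in Theorem 6.1 of \cite{Lewis2002active} (its tubular-neighborhood proof) rather than from the statement quoted as Proposition \ref{7pro:12}, and it is then combined with the compactness estimate $W(u;\bar{g}_{v})\to\{0\}$; everything else is bookkeeping with the smoothness and prox-regularity facts already established.
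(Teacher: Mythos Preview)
Your proposal follows the paper's proof closely: invoke Proposition~\ref{7pro:12}, identify the subspaces via Definition~\ref{7def:13}(iii), pass from $\func{ri}\partial f(\bar{x})$ to $\epri\partial f(\bar{x})$, conclude $v(u)\in W(u;\bar{g}_v)$, and read off $\mathcal{C}^2$-smoothness of $L_\epsilon$ from that of $f|_\mathcal{M}$ and $v$. The one substantive difference is the local-to-global step you correctly flag as the obstacle. The paper handles it by simply \emph{choosing} $\epsilon$ (and $\delta$) small enough, uniformly in $u$ and $g$, so that the local minimum furnished by Proposition~\ref{7pro:12} already coincides with the minimum over $B_{\mathcal{V}}(0,\epsilon)$; it asserts this uniformity without further argument. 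You instead keep $\epsilon$ fixed and insert a compactness argument showing $W(u;\bar{g}_v)\to\{0\}$, then compare with the local minimum. This is more explicit but rests on the very same uniformity claim, and it additionally imports Theorem~\ref{7th:3}, Lemma~\ref{7lem:1}, and Proposition~\ref{7pro:9}, hence the prox-regularity Assumptions~\ref{7as:1}--\ref{7as:2}, which are not among this theorem's hypotheses; the paper's shrinking-$\epsilon$ route avoids those. In short: same skeleton, same acknowledged gap, with your route trading a little extra machinery for being honest about where the gap sits.
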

\begin{proof}
From property (iii) in Definition \ref{7def:13} we see that the subspaces \(\mathcal{U}\) and \(\mathcal{V}\) in Definition \ref{7def:10} and that in Proposition \ref{7pro:12} are the same. 
Applying Proposition \ref{7pro:12}, we get that fix any \(g\in \func{ri}\partial f\mbk{\bar{x}}\), there exists a number dependent on \(g\) and denoted by \(\delta(g)\) such that 
\begin{equation}\label{7eq:the5n1}
\fa u\in B_{\mathcal{U}}\mbk{0,\delta(g)},\ \exists\,\epsilon(g,u),\ \text{such that }v(u)\in \argmin\limits_{v\in B_{\mathcal{V}}\mbk{0,\epsilon(g,u)}}\mcb{f\mbk{\bar{x}+u+v}-\mang{g,\bar{x}+u+v}}, 
\end{equation}
where \(\epsilon(g,u)\) is a number dependent on \(g\) and \(u\).
In \eqref{7eq:the5n1} we can choose \(\epsilon\) sufficiently small so that it does not depend on \(u\). 
As \(g\) can be any element in \(\func{ri}\partial f\mbk{\bar{x}}\)  we can choose \(\delta\) and \(\epsilon\) sufficiently small so that they do not depend on \(g\). 
That is 
\begin{equation}
\exists\,\delta>0\text{ and }\epsilon>0\text{ such that } v(u)\in \argmin\limits_{v\in B_{\mathcal{V}}(0,\epsilon)}\mcb{f\mbk{\bar{x}+u+v}-\mang{g,\bar{x}+u+v}}, \fa u\in B_{\mathcal{U}}(0,\delta).
\end{equation}
As \(\epri \partial f\mbk{\bar{x}}\subset \func{ri} \partial f\mbk{\bar{x}}\), we can take any \(\bar{g}\in \epri \partial f\mbk{\bar{x}}\) and get 
\begin{gather}
v(u)\in \argmin\limits_{v\in B_{\mathcal{V}}(0,\epsilon)}\mcb{f\mbk{\bar{x}+u+v}-\mang{\bar{g},\bar{x}+u+v}}\\
= \argmin\limits_{v\in B_{\mathcal{V}}(0,\epsilon)}\mcb{f\mbk{\bar{x}+u+v}-\mang{\bar{g}_v,v}}\\
=W(u;\bar{g}_v),
\end{gather}
where \(W(u;\bar{g}_v)\) is defined in Definition \ref{def:1}. 
Additionally we have \(v(u)\in \bigcap\limits_{\bar{g}\in\epri \partial f\mbk{\bar{x}}} W\mbk{u;\bar{g}_v}\).
From properties (ii) and (iii) in Proposition \ref{7pro:12} we can choose \(\delta\) sufficiently small such that \(\mathcal{M}'\mathrel{\mathop:}=\mcb{\bar{x}+u+v(u)\colon u\in B_{\mathcal{U}}\mbk{0,\delta}}\subset\mathcal{M}\). 
From property (i) in Definition \ref{7def:13} we can choose \(\delta\) sufficiently small such that \(\mathcal{M}'\) is contained in some open neighborhood \(\mathcal{N}\mbk{\bar{x}}\subset\mathbb{R}^n\). 
Consequently, \(f|_{\mathcal{M}'}\) is of class \(\mathcal{C}^2\). 
From property (i) in Proposition \ref{7pro:12} we can shrink \(\delta\) if necessary to guarantee \(v(u)\) is of class \(\mathcal{C}^2\) on \(B_{\mathcal{U}}\mbk{0,\delta}\). 
By the definition of \(W\mbk{u;\bar{g}_v}\) we have on \(B_{\mathcal{U}}\mbk{0,\delta}\) 
\[L_{\epsilon}\mbk{u;\bar{g}_v}=f\mbk{\bar{x}+u+v(u)}-\mang{\bar{g}_v,v(u)}\in\mathcal{C}^2.\]
We have verified Definition \ref{7def:10} and hence \(\mathcal{M}'\) is a \(\mathcal{C}^2\) fast track leading to \(\bar{x}\).
\end{proof}
Suppose the dimension of \(\mathcal{U}\) is \(m\) and the dimension of \(\mathcal{V}\) is \(p\mathrel{\mathop:}=n-m\).
Suppose that \(\bar{U}\) is a basis matrix for \(\mathcal{U}\) and \(\bar{V}\) is a basis matrix for \(\mathcal{V}\).
We know that if the columns of an \(m'\times n'\) matrix \(A\) are linearly independent then the Moore–Penrose pseudoinverse of \(A\) is \(A^+\mathrel{\mathop:}=\mbk{A^\top A}\inv A^\top\). 
Consequently, every \(x\in\mathbb{R}^n\) can be decomposed into components \(x_u\) and \(x_v\) as follows:
\begin{gather}
\mathbb{R}^n\ni x=\bar{U}x_u+\bar{V}x_v=x_u\oplus x_v\in\mathbb{R}^m\times\mathbb{R}^p,\text{ with}\\
 x_u=\bar{U}^+x \text{ and }x_v=\bar{V}^+x.
\end{gather}

\begin{definition}\label{def:1}
Given $\epsilon>0$,
we take an arbitrary 
\(\bar{g}\in \epri\partial f\mbk{\bar{x}} \)
and define the function $L_{\epsilon}$ as follows:
\begin{equation}\label{7eq:i12}
\mathbb{R}^m\ni u\mapsto L_{\epsilon}(u;\bar{g}_{v})\mathrel{\mathop:}=\inf_{v\in B^\epsilon}\mcb{f(\bar{x}+\bar{U} u+\bar{V} v)-\mang{\bar{g},\bar{V} v}},
\end{equation}
where \(B^\epsilon\mathrel{\mathop:}=\mcb{v\in  {\mathbb{R}^p}\colon \normt{\bar{V}v}\leq \epsilon}\).
Associated with \eqref{7eq:i12} we have the set 
\begin{equation}
W(u;\gbarv)\mathrel{\mathop:}=\mcb{v\in\mathbb{R}^p\colon L_{\epsilon}\mbk{u;\gbarv}=  f(\bar{x}+\bar{U} u+\bar{V} v)-\mang{\bar{g},\bar{V} v}}.
\end{equation}
\end{definition}

\begin{lemma}\label{7lem:4}
Suppose Assumptions \ref{7as:1} and \ref{7as:2} hold. Consider a \(U\)-Lagrangian \(L_{\epsilon}\mbk{u;\bar{g}_v}\). 
Let \(v\mbk{u;\gbarv}\) be a function from \(\mathbb{R}^m\) to \(W(u;\gbarv)\subset \mathbb{R}^p\). 
If there exists \(\delta\in\mathbb{R}_+\) such that both \(L_{\epsilon}\mbk{u;\bar{g}_v}\) and \(v\mbk{u;\gbarv}\) are 
of class \(\mathcal{C}^k\) 
on 
\(B^\delta\mathrel{\mathop:}=\mcb{u\in\mathbb{R}^m\colon\normt{\bar{U}u }< \delta }\) 
, then\\
(i) \(\mathcal{M}\mathrel{\mathop:}=\mcb{\bar{x}+\bar{U}u+\bar{V}v\mbk{u;\gbarv}\colon u\in B^{\delta}}\) is a \(\mathcal{C}^k\)-smooth manifold;\\
(ii) \(\nabla v \mbk{0;\gbarv}=0\), \(v \mbk{u;\gbarv}=o\mbk{\normt{u}}\); and\\
(iii) \begin{equation}\label{7eq:lem2}
T_{\mathcal{M}}\mbk{\bar{x}}=\mathcal{U}.
\end{equation}\\
(iv) The restriction \(f\lvert_\mathcal{M}\) is of class \(\mathcal{C}^k\). 
\end{lemma}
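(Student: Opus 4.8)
The plan is to read off each conclusion from the hypothesis that both $L_{\epsilon}(u;\bar{g}_v)$ and $v(u;\bar{g}_v)$ are $\mathcal{C}^k$ on the neighbourhood $B^{\delta}$, together with the structural facts already established about $L_{\epsilon}$ at $0$. First I would set up the parametrization $G\colon B^{\delta}\to\mathbb{R}^n$, $G(u):=\bar{x}+\bar{U}u+\bar{V}v(u;\bar{g}_v)$. Since $v$ is $\mathcal{C}^k$, $G$ is $\mathcal{C}^k$, and its derivative $\nabla G(u)=\bar{U}+\bar{V}\nabla v(u;\bar{g}_v)$ has full column rank $m$ for every $u$: indeed the columns of $\bar{U}$ span $\mathcal{U}$ and those of $\bar{V}\nabla v$ lie in $\mathcal{V}=\mathcal{U}^{\bot}$, so $\bar{U}a+\bar{V}\nabla v(u)a=0$ forces $\bar{U}a=0$, hence $a=0$ as $\bar{U}$ is a basis matrix. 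Shrinking $\delta$ if necessary so that $G$ is injective (which holds since the $u$-component of $G(u)$ is just $u$), this proves (i): $\mathcal{M}$ is a $\mathcal{C}^k$-smooth manifold in the single-chart sense recalled after Definition of $\mathcal{C}^k$-manifold, and it also gives $T_{\mathcal{M}}(\bar{x})=\operatorname{Im}(\nabla G(0))$.

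For (ii), the key input is Theorem~\ref{7the:4}: under Assumptions~\ref{7as:1} and \ref{7as:2}, $L_{\epsilon}$ is strictly differentiable at $0$ with $\nabla L_{\epsilon}(0;\bar{g}_v)=\bar{g}_u$, and by Theorem~\ref{7th:3}(i), $W(0;\bar{g}_v)=\{0\}$, so $v(0;\bar{g}_v)=0$ and $G(0)=\bar{x}$. To get $\nabla v(0;\bar{g}_v)=0$ I would differentiate the identity $L_{\epsilon}(u;\bar{g}_v)=f(\bar{x}+\bar{U}u+\bar{V}v(u;\bar{g}_v))-\langle\bar{g},\bar{V}v(u;\bar{g}_v)\rangle$; since $0$ is the unconstrained $\mathcal{V}$-minimizer of $w\mapsto f(\bar{x}+\bar{V}w)-\langle\bar{g},\bar{V}w\rangle$ near $0$ and (via $\partial F(0)\ni 0$ in Proposition~\ref{7:prop:6} and its tilt-stability) first-order optimality in the $\mathcal{V}$-directions holds at $v(u;\bar{g}_v)$, the chain-rule expansion of $\nabla L_{\epsilon}(0;\bar{g}_v)$ has no contribution from $\nabla v(0;\bar{g}_v)$, and comparing with $\nabla L_{\epsilon}(0;\bar{g}_v)=\bar{g}_u$ pins down $\nabla v(0;\bar{g}_v)=0$. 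Then $v(u;\bar{g}_v)=v(0;\bar{g}_v)+\nabla v(0;\bar{g}_v)u+o(\|u\|)=o(\|u\|)$, which is (ii). Consequently $\nabla G(0)=\bar{U}$, so $T_{\mathcal{M}}(\bar{x})=\operatorname{Im}(\bar{U})=\mathcal{U}$, which is (iii).

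Finally, (iv) is essentially immediate: on $\mathcal{M}$ we have $f(G(u))=L_{\epsilon}(u;\bar{g}_v)+\langle\bar{g},\bar{V}v(u;\bar{g}_v)\rangle$, a sum of $\mathcal{C}^k$ functions of $u$ by hypothesis, so $f\lvert_{\mathcal{M}}$, read in the chart $G$, is of class $\mathcal{C}^k$. I expect the main obstacle to be part (ii): carefully justifying that the first-order optimality of $v(u;\bar{g}_v)$ as a minimizer over the ball $B^{\epsilon}$ really does eliminate the $\nabla v$ term when differentiating $L_{\epsilon}$ — one must ensure $v(u;\bar{g}_v)$ stays in the interior of $B^{\epsilon}$ for small $u$ (so the constraint is inactive and the stationarity condition $\bar{V}^{\top}\nabla_w[f(\bar{x}+\bar{U}u+\bar{V}w)-\langle\bar{g},\bar{V}w\rangle]\big|_{w=v(u)}=0$ is available), and that the relevant subgradient selection is single-valued and continuous, which is exactly what tilt-stability / strong metric regularity of $\partial F$ from the earlier lemmas delivers. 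The remaining parts are routine linear algebra and chain-rule bookkeeping.
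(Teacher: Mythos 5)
Parts (i) and (iv) of your proposal are sound and essentially the paper's argument: injectivity of $\nabla G(u)=\bar{U}+\bar{V}\nabla v\mbk{u;\gbarv}$ follows from $\mathcal{U}\cap\mathcal{V}=\{0\}$, and in the chart $G$ one has $f(G(u))=L_{\epsilon}\mbk{u;\gbarv}+\mang{\bar{g},\bar{V}v\mbk{u;\gbarv}}$, a sum of $\mathcal{C}^k$ functions. The genuine gap is in (ii), and your (iii) inherits it because you deduce $T_{\mathcal{M}}(\bar{x})=\mathcal{U}$ from $\nabla G(0)=\bar{U}$. Your argument that "comparing with $\nabla L_{\epsilon}(0;\bar{g}_v)=\bar{g}_u$ pins down $\nabla v(0;\bar{g}_v)=0$" is a non sequitur: with a subgradient selection $z(u)\in\partial f(G(u))$ satisfying the $\mathcal{V}$-stationarity $\bar{V}^{\top}z(u)=\bar{V}^{\top}\bar{g}$, the chain rule gives $\nabla L_{\epsilon}(u;\bar{g}_v)=\bar{U}^{\top}z(u)+\nabla v(u;\bar{g}_v)^{\top}\bar{V}^{\top}\bigl(z(u)-\bar{g}\bigr)$, and stationarity annihilates exactly the term in which $\nabla v$ appears. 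What survives at $u=0$ is $\nabla L_{\epsilon}(0;\bar{g}_v)=\bar{U}^{\top}z(0)=\bar{g}_u$ (Corollary \ref{7cor:1}), an identity that holds whatever $\nabla v(0;\bar{g}_v)$ is; first-order optimality in the $\mathcal{V}$-directions is precisely the information the minimization has already consumed, so it cannot determine the derivative of the argmin selection, and tilt-stability/strong metric regularity of $\partial F$ does not change this.

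What is needed instead is a growth estimate exploiting that $\bar{g}$ lies in the $\epsilon$-relative interior (i.e. $\bar{g}\in D_{\epsilon}f$), not merely stationarity of $v(u)$. For $v(u;\gbarv)\neq 0$ let $w$ be the unit vector along $\bar{V}v\mbk{u;\gbarv}$ and take $\eta\in\left]0,\epsilon\right[$, so $\bar{g}+\eta w\in\partial f(\bar{x})$; the prox-regularity inequality \eqref{med:1} at $x'=\bar{x}+\bar{U}u+\bar{V}v\mbk{u;\gbarv}$ then yields $L_{\epsilon}\mbk{u;\gbarv}\geq f(\bar{x})+\mang{\bar{g}_u,u}+\eta\normt{\bar{V}v\mbk{u;\gbarv}}-\tfrac{\rho}{2}\bigl(\normt{\bar{U}u}^2+\normt{\bar{V}v\mbk{u;\gbarv}}^2\bigr)$, while $L_{\epsilon}\mbk{u;\gbarv}\leq f(\bar{x}+\bar{U}u)=f(\bar{x})+\mang{\bar{g}_u,u}+o(\normt{u})$ by Proposition \ref{7pro:9} (or use Theorem \ref{7the:4}). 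Since $v\mbk{u;\gbarv}\to 0$, this gives $\normt{v\mbk{u;\gbarv}}=o(\normt{u})$, hence $\nabla v\mbk{0;\gbarv}=0$ by differentiability and $v\mbk{0;\gbarv}=0$ (Theorem \ref{7th:3}(i)), and (iii) follows. For comparison, the paper derives $\nabla v\mbk{0;\gbarv}=0$ by a different (purely linear-algebraic) analysis of the kernel of $\nabla G(0)^{\top}$ rather than from any optimality comparison, so on this key point your route and the paper's diverge, and as written yours does not close.
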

\begin{proof}
Define 
\begin{equation}
\begin{split}
G\colon & \mathbb{R}^m \rightarrow \mathbb{R}^n\\
 & u\mapsto\bar{x}+\bar{U}u+\bar{V}v\mbk{u;\gbarv}.
\end{split}
\end{equation}
We know \(G(0)=0\) because \(v\mbk{0;\gbarv}=0\) from Theorem \ref{7th:3}(i). 
As \(v\mbk{u;\gbarv}\) is of class \(\mathcal{C}^k\), and hence \(G\) is 
of class \(\mathcal{C}^k\)
on \(B^\delta\). 
It follows that \(\mathcal{M}=\mcb{G(u)\colon  u\in B^\delta }\) is a \(\mathcal{C}^k\)-smooth manifold around \(\bar{x}=G(0)\) provided \(\nabla G(0)\) is injective, where the Jacobian \(\nabla G(0)\) is a \(n\times m\) matrix. 
Take a \(u\in\mathbb{R}^m\) and suppose \(\nabla G(0)u=\bar{U}u+\bar{V}\nabla v\mbk{0;\gbarv}u=0\), then \(\bar{U}u=-\bar{V}\nabla v\mbk{0;\gbarv}u\). 
The left-hand side is an element in \(\mathcal{U}\) and the right-hand side is an element in \(\mathcal{V}\). As the common elements of the two subspaces can only be 0, we get \(u\) must be 0. 
This shows that \(\nabla G(0)\) is injective and therefore the rank of \(\nabla G(0)\) is \(m\).

We know that \(\func{rank}\mbk{\nabla G(0)^\top}+\func{Null}\mbk{\nabla G(0)^\top}=n\). So \(\func{Null}\mbk{\nabla G(0)^\top}=n-m=p\).
Now we show that the kernel of \(\nabla G(0)^\top\) is \(\mathcal{V}\). Take any \(y\in\mathcal{U}\) and set \(\nabla G(0)^\top y=0 \), then \(\bar{U}^\top y+ \mbk{\bar{V}\nabla v\mbk{0;\gbarv}}^\top y=\bar{U}^\top y=0\). As \(\bar{U}\) is an arbitrary basis matrix of \(\mathcal{U}\), we can take an orthogonal one so that \(\mcb{\bar{U}^\top z\colon z\in \mathcal{U}}=\mathbb{R}^m\). 
Therefore \(\bar{U}^\top y=0\) implies \(y\) must be 0. 
Hence the kernel of \(\nabla G(0)^\top\) cannot contain any non zero element in \(\mathcal{U}\) and it must be \(\mathcal{V}\). 
That is, for any \(z\in\mathcal{V}\), \(\nabla G(0)^\top z=0= \bar{U}^\top z+ \mbk{\bar{V}\nabla v\mbk{0;\gbarv}}^\top z=\nabla v\mbk{0;\gbarv}^\top\bar{V}^\top z\).
We can take an orthogonal basis matrix of \(\mathcal{V}\) so that \(\mcb{\bar{V}^\top z\colon z\in \mathcal{V}}=\mathbb{R}^p\).
Consequently, we get \(\nabla v\mbk{0;\gbarv}^\top w=0\) for all \(w\in \mathbb{R}^p\) and thus \(\nabla v\mbk{0;\gbarv}=0\).

As \(v\mbk{u;\gbarv}\) is of class \(\mathcal{C}^k\), it follows that 
\(v\mbk{u;\gbarv}=v\mbk{0;\gbarv}+\nabla v\mbk{0;\gbarv}\mbk{u-0}+o\mbk{\normt{u-0}}\). 
Notice that both \(v\mbk{0;\gbarv}\) and \(\nabla v\mbk{0;\gbarv}\) are 0, and therefore \(v \mbk{u;\gbarv}=o\mbk{\normt{u}}\).
 Next, we show \eqref{7eq:lem2}. 
 Since \(T_{\mathcal{M}}\mbk{\bar{x}}=\func{Image}\mbk{\nabla G\mbk{0}}=\mcb{\bar{U}u+\bar{V}\nabla v\mbk{0;\gbarv}u\colon u\in\mathbb{R}^m }\), it suffices to show \(\nabla v\mbk{0;\gbarv}\) is a zero matrix.

From the definition of \(L_{\epsilon}\mbk{u;\bar{g}_v}\) and \(v\mbk{u;\gbarv}\) we have 
 \begin{equation}
 L_{\epsilon}\mbk{u;\bar{g}_v}=f\mbk{\bar{x}+\bar{U}u+\bar{V}v\mbk{u;\gbarv}}-\mang{\bar{g},\bar{V}v\mbk{u;\gbarv}},\ \forall \,  u\in B^\delta.
\end{equation}
As both \(L_{\epsilon}\mbk{u;\bar{g}_v}\) and \(v\mbk{u;\gbarv}\) are \(\mathcal{C}^k\)-smooth on \(B^\delta\), it follows that \(f\lvert_\mathcal{M}\) is \(\mathcal{C}^k\)-smooth.
\end{proof}
\begin{remark}
Comparing the conditions in Lemma \ref{7lem:4} and Definition \ref{7def:10}, we see the set \(\mathcal{M}\) in Lemma \ref{7lem:4} is almost a fast track except that it depends on the parameter \(\bar{g}_v\) because of the function \(v\mbk{u;\bar{g}_v}\). From now on, we consider a fast track \(\mathcal{M}\mathrel{\mathop:}=\mcb{\bar{x}+\bar{U}u+\bar{V}v\mbk{u}\colon u\in B^\delta}\) and study the conditions under which is can corresponds to a partly smooth function.
\end{remark}
\begin{asump}\label{7as:4}
There exists a real number \(\hat \delta\) such that \(\hat{\partial}f\mbk{x}=\partial f\mbk{x}\) for all \(x\in B\mbk{\bar{x},\hat{\delta}}\cap\mathcal{M}\).
\end{asump}
\begin{asump}\label{7as:5}
There exists a real number \(\tau\in]0,\delta]\) such that for all \(\bar{u}\in B^\tau\mathrel{\mathop:}=\mcb{u\in\mathbb{R}^m\colon \normt{\bar{U}u}\leq \tau}\) we have \(W\mbk{\bar{u};\gbarv}\bigcap\int B^\epsilon\not=\emptyset \) and \(v(\bar{u})\in\int B^\epsilon\) for all \(\bar{u}\in B^\tau\).
\end{asump}
\begin{asump}\label{7:as6}
Suppose the following is true
\begin{equation}
P_{\mathcal{V}}\mbk{\partial f\mbk{\bar{x}}\setminus\epri \partial f\mbk{\bar{x}}}\subset \liminf\limits_{x\overset{\mathcal{M}}{\rightarrow}\bar{x}}P_{\mathcal{V}}\mbk{\partial f\mbk{{x}}}.
\end{equation}
\end{asump}
\begin{theorem}\label{7the:5}
Let \(\bar{x}\) be a local minimizer of \(f\) and \(\mathcal{M}\mathrel{\mathop:}=\mcb{\bar{x}+\bar{U}u+\bar{V}v\mbk{u}\colon u\in B^\delta }\) be a \(\mathcal{C}^1\) fast track of \(f\) for some \(\delta\in\mathbb{R}_{+}\). 
If Assumptions \ref{7as:1},  \ref{7as:2}, \ref{7as:4}, \ref{7as:5} and \ref{7:as6} hold, 
then the subdifferential map \(\partial f\) is inner-semicontinuous at \(\bar{x}\) relative to \(\mathcal{M}\).
\end{theorem}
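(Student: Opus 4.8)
Throughout, ``inner‑semicontinuous at $\bar{x}$ relative to $\mathcal{M}$'' means $\partial f(\bar{x})\subset\liminf_{x\overset{\mathcal{M}}{\rightarrow}\bar{x}}\partial f(x)$; equivalently, for every $g\in\partial f(\bar{x})$ and every sequence $x^k\overset{\mathcal{M}}{\rightarrow}\bar{x}$ we must produce $g^k\in\partial f(x^k)$ with $g^k\to g$. Write $x^k=G(u^k)$ with $u^k\to 0$, where $G(u)\mathrel{\mathop:}=\bar{x}+\bar{U}u+\bar{V}v(u)$ and $\bar{U},\bar{V}$ are orthonormal basis matrices of $\mathcal{U},\mathcal{V}$. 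Since $\mathcal{M}$ is a $\mathcal{C}^1$ fast track, Definition \ref{7def:10} and Proposition \ref{7prop:11} give that $v(\cdot)$ is $\mathcal{C}^1$ near $0$, that $v(u)\in\bigcap_{\bar{g}\in\epri\partial f(\bar{x})}W(u;\bar{g}_v)$, and that $L_{\epsilon}(\cdot;\bar{g}_v)$ is $\mathcal{C}^1$ for every $\bar{g}\in\epri\partial f(\bar{x})$; by Lemma \ref{7lem:4}(ii) (applied with the fast‑track map $v$ in the role of $v(\cdot;\bar{g}_v)$), $\nabla v(0)=0$. Hence $\phi(u)\mathrel{\mathop:}=f(G(u))=L_{\epsilon}(u;\bar{g}_v)+\mang{\bar{g},\bar{V}v(u)}$ is $\mathcal{C}^1$ near $0$, and differentiating at $u=0$ with $v(0)=0$, $\nabla v(0)=0$ and $\nabla L_{\epsilon}(0;\bar{g}_v)=\bar{g}_u$ (Theorem \ref{7the:4}) gives $\nabla\phi(0)=\bar{g}_u$; by Corollary \ref{7cor:1} this value does not depend on the chosen $\bar{g}$.

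The first key step is an identity describing subgradients at points of $\mathcal{M}$. For $k$ large, $x^k\in\mathcal{M}\cap B(\bar{x},\hat{\delta})$, so by Assumption \ref{7as:4} every $s\in\partial f(x^k)$ is a Fr\'echet subgradient. Restricting the Fr\'echet subgradient inequality for $s$ at $x^k$ to the $\mathcal{C}^1$ surface $u\mapsto G(u)$, and using $G(u)-G(u^k)=\nabla G(u^k)(u-u^k)+o(\normt{u-u^k})$ together with $\normt{G(u)-G(u^k)}=O(\normt{u-u^k})$, one sees that $\nabla G(u^k)^{\top}s$ is a Fr\'echet subgradient of the differentiable function $\phi$ at $u^k$, hence equals $\nabla\phi(u^k)$. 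Since $\nabla G(u^k)=\bar{U}+\bar{V}\nabla v(u^k)$, this reads
\begin{equation}\label{7eq:the5key}
\bar{U}^{\top}s=\nabla\phi(u^k)-\nabla v(u^k)^{\top}\bar{V}^{\top}s\qquad\text{for all }s\in\partial f(x^k).
\end{equation}
Thus on $\mathcal{M}$ near $\bar{x}$ the $\mathcal{U}$‑component of any subgradient is an affine function of its $\mathcal{V}$‑component, with coefficients converging to $\nabla\phi(0)=\bar{g}_u$ and $0$.

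Now fix $g\in\partial f(\bar{x})$ and set $g_v\mathrel{\mathop:}=\bar{V}^{\top}g$; by Corollary \ref{7cor:1} the $\mathcal{U}$‑component of $g$ equals $\bar{g}_u$. If $g\in\epri\partial f(\bar{x})$, then since $v(u^k)\in W(u^k;g_v)$ and, by Assumption \ref{7as:5}, $v(u^k)\in\int B^{\epsilon}$, the point $v(u^k)$ is an unconstrained local minimizer of $v\mapsto f(\bar{x}+\bar{U}u^k+\bar{V}v)-\mang{g,\bar{V}v}$; Fermat's rule, the sum rule of Proposition \ref{7pro:5}, and the basic chain rule (legitimate thanks to the regularity granted by Assumption \ref{7as:4}) yield some $s^k\in\partial f(x^k)$ with $\bar{V}^{\top}s^k=g_v$. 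If instead $g\in\partial f(\bar{x})\setminus\epri\partial f(\bar{x})$, then Assumption \ref{7:as6} gives $g_v\in\liminf_{x\overset{\mathcal{M}}{\rightarrow}\bar{x}}P_{\mathcal{V}}(\partial f(x))$, so there exist $s^k\in\partial f(x^k)$ with $\bar{V}^{\top}s^k\to g_v$. In either case $\bar{V}^{\top}s^k\to g_v$ and $\{\bar{V}^{\top}s^k\}$ is bounded, so by \eqref{7eq:the5key}, continuity of $\nabla\phi$, and $\nabla v(u^k)\to 0$, we get $\bar{U}^{\top}s^k\to\nabla\phi(0)=\bar{g}_u$. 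Therefore $s^k=\bar{U}\bar{U}^{\top}s^k+\bar{V}\bar{V}^{\top}s^k$ converges to the vector with $\mathcal{U}$‑component $\bar{g}_u$ and $\mathcal{V}$‑component $g_v$, i.e.\ $s^k\to g$. Since $g$ and the sequence were arbitrary, $\partial f$ is inner‑semicontinuous at $\bar{x}$ relative to $\mathcal{M}$.

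The main obstacle is the case $g\in\epri\partial f(\bar{x})$ in the third paragraph: one must upgrade the interior‑minimality of the fast‑track point $v(u^k)$ into a genuine subgradient $s^k\in\partial f(x^k)$ with the prescribed $\mathcal{V}$‑component, which is where the calculus rules and the regularity of $f$ on $\mathcal{M}$ (Assumption \ref{7as:4}) must be used with care, and for $g$ outside $\epri\partial f(\bar{x})$ this selection is exactly what Assumption \ref{7:as6} is designed to supply. By contrast, the identity \eqref{7eq:the5key} is a soft consequence of the $\mathcal{C}^1$ fast‑track structure, and the passage to the limit is routine once the $\mathcal{V}$‑components are controlled.
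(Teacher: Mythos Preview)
Your overall strategy mirrors the paper's proof essentially step for step: (a) derive the affine identity relating $s_u$ to $s_v$ for every $s\in\partial f(G(u))$ via the $\mathcal{C}^1$ chain rule and Assumption~\ref{7as:4}; (b) for $g\in\epri\partial f(\bar{x})$ produce $s^k\in\partial f(x^k)$ with prescribed $\mathcal{V}$-component $g_v$; (c) invoke Assumption~\ref{7:as6} for $g\notin\epri\partial f(\bar{x})$; (d) pass to the limit using continuity of $\nabla\phi$ and $\nabla v(u^k)\to 0$. Steps (a), (c), (d) coincide with the paper almost line for line.

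The gap is in step (b). Your Fermat-rule argument gives $0\in\hat{\partial}\psi(v(u^k))$ for $\psi(v)=f(\bar{x}+\bar{U}u^k+\bar{V}v)-\mang{g,\bar{V}v}$, hence $\bar{V}^{\top}g\in\hat{\partial}(f\circ T)(v(u^k))$ with $T(v)=\bar{x}+\bar{U}u^k+\bar{V}v$. To then extract $s^k\in\partial f(x^k)$ with $\bar{V}^{\top}s^k=g_v$ you need the inclusion $\hat{\partial}(f\circ T)(v(u^k))\subset\bar{V}^{\top}\partial f(x^k)$, and the basic chain rule (10.6 of \cite{Rockafellar1998}) delivers this only under the qualification $\partial^{\infty}f(x^k)\cap\ker\bar{V}^{\top}=\partial^{\infty}f(x^k)\cap\mathcal{U}=\{0\}$. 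Assumption~\ref{7as:4} merely says $\hat{\partial}f=\partial f$ on $\mathcal{M}$ near $\bar{x}$; it says nothing about horizon subgradients and does not by itself legitimise this chain rule in the $v$-variable alone.

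The paper sidesteps the issue by lifting to the full $(u,v)$-space: from the differentiability of $L_{\epsilon}$ it obtains, via 10.13 of \cite{Rockafellar1998}, a \emph{joint} Fr\'echet subgradient $(\nabla L_{\epsilon}(\bar{u};\bar{g}_v),0)\in\hat{\partial}h(\bar{u},v(\bar{u}))$, and then applies the chain rule to the full-rank map $\tilde{G}(u,v)=\bar{x}+\bar{U}u+\bar{V}v$, whose adjoint $[\bar{U}\ \bar{V}]^{\top}$ has trivial kernel, so the horizon qualification is automatic. That yields $\bar{g}_v\in\bar{V}^{\top}\partial f(G(\bar{u}))$ directly. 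Replacing your $v$-only Fermat step by this joint-variable argument (or otherwise verifying $\partial^{\infty}f(x^k)\cap\mathcal{U}=\{0\}$ from the standing hypotheses) closes the gap; the remainder of your proof then goes through unchanged.
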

\begin{proof}
By the definition of inner-semicontinuity we need to show 
\begin{equation}\label{7eq:theo512}
\fa \bar{g}\in \partial f\mbk{\bar{x}},\ \fa x^k\overset{\mathcal{M}}{\rightarrow}\bar{x},\ \exists\,g^k\rightarrow \bar{g}\ \mathrm{ with }\ g^k\in \partial f\mbk{{x}^k}.
\end{equation}
We first show the following
\begin{equation}\label{7eq:theo58}
\fa \bar{g}\in \epri \partial f\mbk{\bar{x}},\ \fa x^k\overset{\mathcal{M}}{\rightarrow}\bar{x},\ \exists\,g^k\rightarrow \bar{g}\ \mathrm{ with }\ g^k\in \partial f\mbk{{x}^k}.
\end{equation} 
Let \(\bar{g}\) be an arbitrary element in \(\epri \partial f\mbk{\bar{x}}\). Set \(g(u)\mathrel{\mathop:}=f\mbk{G(u)}\) with \(G(u)\mathrel{\mathop:}=\bar{x}+\bar{U}u+\bar{V}v(u)\) and \(u\in B^\delta\). 
As \(\mathcal{M}\) is a \(\mathcal{C}^1\)-fast track, Lemma \ref{7lem:4} can be applied with \(k=1\).

From \eqref{7lem:4}(iv) it is easy to get 
\begin{equation}\label{7eq:theo51}
\hat{\partial}g\mbk{\bar{u}}=\mcb{\nabla g\mbk{\bar{u}}}=\mcb{\nabla L_\epsilon\mbk{\bar{u};\gbarv}+\mang{\bar{g},\bar{V}\nabla v(\bar{u})}},\ \fa \bar{u}\in B^\delta. 
\end{equation}
On the other hand, since \(g(u)=f\mbk{G(u)}\) and \(G(u)\) is of class \(\mathcal{C}^1\), the basic chain rule reveals 
\begin{equation}\label{7eq:theo52}
\hat{\partial}g\mbk{\bar{u}}\supset \nabla G\mbk{\bar{u}}^\top \hat{\partial}f\mbk{G(\bar{u})},\ \fa \bar{u}\in B^{\delta}.
\end{equation}
Consequently, we get from \eqref{7eq:theo51} and \eqref{7eq:theo52} that 
\begin{equation}
\nabla g\mbk{\bar{u}}=\nabla G\mbk{\bar{u}}^\top \hat{\partial}f\mbk{G(\bar{u})},\ \fa \bar{u}\in B^{\delta}.
\end{equation}
For all \(z\in\hat{\partial} f\mbk{G\mbk{\bar{u}}}\), we have 
\(\nabla g\mbk{\bar{u}}=\nabla G\mbk{\bar{u}}^\top z=\left[\bar{U}+\bar{V}\nabla v(\bar{u})\right]^\top\mbk{\bar{U}z_u+\bar{V}z_v}=\bar{U}^\top \bar{U}z_u+\nabla v\mbk{\bar{u}}^\top \bar{V}^\top\bar{V}z_v\).
As \(\bar{U}\) and \(\bar{V}\) are arbitrary basis matrices for \(\mathcal{U}\) and \(\mathcal{V}\), respectively, we can choose orthogonal ones to obtain \(\bar{U}^\top \bar{U}=I_m\) and \(\bar{V}^\top\bar{V}=I_p\) where \(I_m\) and \(I_n\) are identity matrices. 
Therefore, we get 
\begin{equation}\label{7eq:the53}
\nabla g\mbk{\bar{u}}=z_u+\nabla v\mbk{\bar{u}}^\top z_v,\ \fa z\in\hat{\partial} f\mbk{G\mbk{\bar{u}}},\ \bar{u}\in B^\delta.
\end{equation} 
We have 
\begin{equation}
\normt{G(\bar{u})-\bar{x}}=\normt{\bar{U}\bar{u}+\bar{V}v\mbk{\bar{u}}}\leq \normt{\bar{U}\bar{u}}+\normt{\bar{V}v\mbk{\bar{u}}}. 
\end{equation}
As \(\normt{v(u)}=o\mbk{\normt{u}}\), we can choose a sufficiently small number 
\(\zeta\in]0,\delta]\) such that \(\normt{G(\bar{u})-\bar{x}}\leq \hat{\delta}\) for all 
\(\bar{u}\in B^\zeta\mathrel{\mathop:}=\mcb{u\in\mathbb{R}^m\colon \normt{\bar{U}u} \leq \zeta}\), 
where \(\hat{\delta}\) is introduced in Assumption \ref{7as:4}. 
Consequently, Assumption \ref{7as:4} implies that 
\begin{equation}\label{7eq:the54}
\hat{\partial}f\mbk{G\mbk{\bar{u}}}=\partial f\mbk{G\mbk{\bar{u}}},\ \fa \bar{u}\in B^{\zeta}. 
\end{equation}
From \eqref{7eq:theo51}, \eqref{7eq:the53} and \eqref{7eq:the54} we see 
\begin{equation}\label{7eq:the57}
\nabla L_\epsilon\mbk{\bar{u};\bar{g}_v}+\nabla v(\bar{u})^\top \bar{V}^\top \bar{g}=z_u+\nabla v\mbk{\bar{u}}^\top z_v,\ \fa z\in{\partial} f\mbk{G\mbk{\bar{u}}},\ \bar{u}\in B^{\zeta}.
\end{equation} 
Next we show that there exists \(\bar{z}\in {\partial} f\mbk{G\mbk{\bar{u}}}\) such that \(\bar{z}_v=\bar{V}^\top \bar{g}\).
We express \(L_\epsilon\) as 
\begin{gather}
L_\epsilon\mbk{u;\gbarv}=\inf\limits_{v\in \mathbb{R}^p}h(u,v),\ \mathrm{ where }\\
h(u,v)=f\mbk{\bar{x}+\bar{U}u+\bar{V}v}-\mang{\bar{g},\bar{V}v}+\delta_{B^\epsilon}(v).
\end{gather} 
Using the same argument as in Theorem \ref{7th:2}(i) we can show that \(h(u,v)\) is proper, l.s.c. on \(\mathbb{R}^m\times\mathbb{R}^p\) and level bounded in $v$ locally uniformly in $u$. 
Thus, 10.13 of \cite{Rockafellar1998} can be applied to yield 
\begin{equation}\label{7eq:theo53}
\hat{\partial}L_{\epsilon}\mbk{\bar{u};\gbarv}\subset\bigcap\limits_{\bar{v}\in W\mbk{\bar{u};\gbarv}}\mcb{s\colon (s,0)\in\hat\partial h\mbk{\bar{u},\bar{v}}},\ \fa \bar{u}\in\dom L_{\epsilon}\mbk{{u};\gbarv}.
\end{equation}
Now consider any \(\bar{u}\in B^\tau\), as \(W\mbk{\bar{u};\gbarv}\bigcap\int B^\epsilon\not=\emptyset \), the smoothness of \(L_\epsilon\mbk{u;\gbarv}\) on \(B^\delta\) and \eqref{7eq:theo53} imply 
\begin{equation}\label{7eq:theo54}
\mbk{\nabla L_\epsilon\mbk{\bar{u};\gbarv},0}\in \hat{\partial}h\mbk{\bar{u},\bar{v}},\ \fa \bar{v}\in W\mbk{\bar{u};\gbarv}\bigcap\int B^\epsilon,\ \fa \bar{u}\in B^\tau.
\end{equation}
Define \(\tilde{g}(u,v)\mathrel{\mathop:}=f\mbk{\tilde{G}(u,v)}\), where \(\tilde{G}(u,v)\mathrel{\mathop:}=\bar{x}+\bar{U}u+\bar{V}v\).
From Proposition \ref{7pro:5} we get 
\begin{equation}\label{7eq:theo55}
 {\partial}h\mbk{\bar{u},\bar{v}}= {\partial}\tilde{g}\mbk{\bar{u},\bar{v}}-\mbk{0,\bar{V}^\top \bar{g}},\ \fa \bar{v}\in \int B^\epsilon.
\end{equation}
It is easy to see that the kernel of \(\nabla\tilde{G}\mbk{\bar{u},\bar{v}}^\top =\left[\bar{U}\ \bar{V}\right]^\top\) is \(\mcb{0}\) for all \(\mbk{\bar{u},\bar{v}}\in\mathbb{R}^m\times\mathbb{R}^p\). 
Hence we can apply the basic chain rule (see 10.6 of \cite{Rockafellar1998}) to obtain 
\begin{equation}\label{7eq:theo56}
\partial\tilde{g}\mbk{\bar{u},\bar{v}}\subset\nabla\tilde{G}\mbk{\bar{u},\bar{v}}^\top\partial f\mbk{\tilde{G}\mbk{\bar{u},\bar{v}}},\ \fa \mbk{\bar{u},\bar{v}}\in\mathbb{R}^m\times\mathbb{R}^p.
\end{equation}
From \eqref{7eq:theo54}, the fact that \(\hat{\partial}h\mbk{\bar{u},\bar{v}}\subset \partial h\mbk{\bar{u},\bar{v}}\), \eqref{7eq:theo55} and \eqref{7eq:theo56} it follows that 
\begin{gather}
\mbk{\nabla L_\epsilon\mbk{\bar{u};\gbarv},0}\in \nabla\tilde{G}\mbk{\bar{u},\bar{v}}^\top\partial f\mbk{\tilde{G}\mbk{\bar{u},\bar{v}}}-\mbk{0,\bar{V}^\top \bar{g}}\\
=\mbk{\bar{U}^\top \partial f\mbk{\tilde{G}\mbk{\bar{u},\bar{v}}},\bar{V}^\top \partial f\mbk{\tilde{G}\mbk{\bar{u},\bar{v}}}-\bar{V}^\top\bar{g}},\ \fa \bar{v}\in W\mbk{\bar{u};\gbarv}\bigcap\int B^\epsilon,\ \fa \bar{u}\in B^\tau.
\end{gather} 
Consequently, 
\begin{equation}\label{7eq:the55}
\bar{V}^\top\bar{g}\in \bar{V}^\top \partial f\mbk{\tilde{G}\mbk{\bar{u},\bar{v}}} \ \fa \bar{v}\in W\mbk{\bar{u};\gbarv}\bigcap\int B^\epsilon,\ \fa \bar{u}\in B^\tau.
\end{equation} 
From Assumption \ref{7as:5}, \(v\mbk{\bar{u}}\in \int B^\epsilon\). 
On the other hand the definition of \(v(\cdot)\) implies that \(v\mbk{\bar{u}}\in\bigcap\limits_{\bar{g}\in\epri \partial f\mbk{\bar{x}}} W\mbk{\bar{u};\bar{g}_v}\) for all \(\bar{u}\in B^\delta\).  
Therefore we can set the \(\bar{v}\) in \eqref{7eq:the55} to be \(v\mbk{\bar{u}}\) and get 
\begin{equation}\label{7eq:the56}
\bar{g}_v\in \bar{V}^\top \partial f\mbk{{G}\mbk{\bar{u}}}, \ \fa  \bar{u}\in B^\tau.
\end{equation}
Letting \(\gamma\mathrel{\mathop:}=\min\mcb{\tau,\zeta}\) then for all \(\bar{u}\in B^\gamma\mathrel{\mathop:}=\mcb{u\in\mathbb{R}^m\colon \normt{\bar{U}u} \leq \gamma}\), 
we have from \eqref{7eq:the56}, the orthogonality of \(\bar
V\) and \eqref{7eq:the57} that 
\begin{equation}\label{7eq:the59}
\nabla L_\epsilon\mbk{\bar{u};\bar{g}_v}=z_u,\ \fa z\in{\partial} f\mbk{G\mbk{\bar{u}}},\ \bar{u}\in B^{\gamma}.
\end{equation}
Now consider the sequence \(\mcb{x^k}\) in \eqref{7eq:theo58}.
As \(\mathcal{M}\) is a smooth manifold, each \(x\in\mathcal{M}\) corresponds to a \(u\) such that \(x=G(u)\) as long as \(u\in B^\delta\).
For each \(x^k\overset{\mathcal{M}}{\rightarrow}\bar{x}\), correspondingly there is \(u^k\overset{B^\delta}{\rightarrow}0\) with \(x^k=G\mbk{u^k}\).
Notice that \(\tau\leq\delta\) and therefore for \(k\) big enough, by \eqref{7eq:the56}, we can take an \(g^k\) to be an element in \(\partial f\mbk{x^k}\) such that 
\begin{equation}\label{7eq:the510}
\bar{g}_v=g^k_v.
\end{equation}
And from \eqref{7eq:the59} and the fact that \(\gamma\leq \zeta\leq\delta\) we can get for \(k\) big enough
\begin{equation}\label{7eq:the511}
\nabla L_\epsilon\mbk{u^k;\bar{g}_v}=g^k_u,
\end{equation}
From \eqref{7eq:the510},  \eqref{7eq:the511}, the smoothness of \(L_{\epsilon}\) and Theorem \ref{7the:4} we have
\begin{equation}
g^k=\bar{U} g^k_u+\bar{V} g^k_v\rightarrow \bar{U}\nabla L_\epsilon\mbk{0;\bar{g}_v}+\bar{V}\bar{g}_v=\bar{g}.
\end{equation}
We have showed \eqref{7eq:theo58}. 
Next we show \eqref{7eq:theo512}.
 To simplify notation we denote \(E\mathrel{\mathop:}=\partial f\mbk{\bar{x}}\setminus \epri \partial f\mbk{\bar{x}}\). 
 To show \eqref{7eq:theo512} based on \eqref{7eq:theo58} we only need to show the following
 \begin{equation}\label{7eq:theo514}
 \fa \bar{g}\in E,\ \fa x^k\overset{\mathcal{M}}{\rightarrow}\bar{x},\ \exists\,g^k\rightarrow \bar{g}\ \mathrm{ with }\ g^k\in \partial f\mbk{{x}^k}.
 \end{equation}
 From Assumption \ref{7:as6} we see that 
 \begin{equation}\label{7eq:the515}
\fa g_v\in P_{\mathcal{V}}\mbk{E},\ \fa x^k\overset{\mathcal{M}}{\rightarrow}\bar{x},\ \exists\,g^k_v\rightarrow {g}_v\text{ with } g^k\in \partial f\mbk{x^k}.
\end{equation}
Notice from \eqref{7eq:the59} and \eqref{7eq:the511} we can set the \(g^k\) in \eqref{7eq:theo514} to be \(\bar{U}\nabla L_\epsilon\mbk{u^k;\bar{g}_v}+\bar{V}g^k_v\) where \(g^k_v\) is introduced in \eqref{7eq:the515} and thus \eqref{7eq:theo514} follows from the smoothness of \(L_{\epsilon}\), Theorem \ref{7the:4} and Corollary \ref{7cor:1}.
\end{proof}
\begin{asump}\label{7as:7}
There exists a real number \(\bar \delta\) such that at every point in \(B\mbk{\bar{x},\bar\delta}\cap \mathcal{M}\), \(f\) is subdifferentially regular and has a subgradient.
\end{asump}
\begin{theorem}
Under Assumptions \ref{7as:1}, \ref{7as:2}, \ref{7as:5}, \ref{7:as6}, and \ref{7as:7}, if \(\bar{x}\) is a local minimizer of \(f\) and \(\mathcal{M}\mathrel{\mathop:}=\mcb{\bar{x}+\bar{U}u+\bar{V}v\mbk{u}\colon\bar{U} u\in B}\) is a \(\mathcal{C}^k\) fast track of \(f\), 
then \(f\) is \(\mathcal{C}^k\)-partly smooth at \(\bar{x}\) relative to \(\mathcal{M}\).
\end{theorem}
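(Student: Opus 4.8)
The plan is to verify, one at a time, the four defining conditions of \(\mathcal{C}^k\)-partial smoothness in Definition \ref{7def:13}, taking \(h=f\) and letting \(\mathcal{M}\) be the given fast track. The groundwork is Lemma \ref{7lem:4}: because \(\mathcal{M}\) is a \(\mathcal{C}^k\) fast track, Definition \ref{7def:10} supplies a \(\mathcal{C}^k\) function \(u\mapsto v(u)\) with \(v(u)\in W(u;\bar g_v)\), and (the argument behind) Proposition \ref{7prop:11} shows \(L_\epsilon(\cdot;\bar g_v)\) is itself \(\mathcal{C}^k\) on a ball \(B^\delta\); hence Lemma \ref{7lem:4} applies with this \(k\), giving that \(\mathcal{M}\) is a \(\mathcal{C}^k\)-smooth manifold with \(T_{\mathcal{M}}(\bar x)=\mathcal{U}\), \(\nabla v(0;\bar g_v)=0\), \(v(u)=o(\normt{u})\), and \(f|_{\mathcal{M}}\in\mathcal{C}^k\).

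Property (iii) then falls out at once: a \(\mathcal{C}^k\) manifold (\(k\ge 1\)) has limiting normal cone at \(\bar x\) equal to the subspace \(T_{\mathcal{M}}(\bar x)^{\bot}\), so \(N_{\mathcal{M}}(\bar x)=\mathcal{U}^{\bot}=\mathcal{V}\), the subspace of Definition \ref{7def:9}. For property (i) the plan is to exhibit the smooth extension explicitly. Writing a point \(x\) near \(\bar x\) as \(x=\bar x+\bar U x_u+\bar V x_v\) with \(x_u=\bar U^{+}(x-\bar x)\) and \(x_v=\bar V^{+}(x-\bar x)\), define
\[
g(x):=L_\epsilon(x_u;\bar g_v)+\mang{\bar g,\bar V x_v}.
\]
Since \(x\mapsto(x_u,x_v)\) is linear and \(L_\epsilon(\cdot;\bar g_v)\in\mathcal{C}^k\), the function \(g\) is \(\mathcal{C}^k\) on a neighborhood \(\mathcal{N}(\bar x)\); and on \(\mathcal{M}\) one has \(x_u=u\), \(x_v=v(u)\), so the definition of \(L_\epsilon\) together with \(v(u)\in W(u;\bar g_v)\) gives \(g(x)=L_\epsilon(u;\bar g_v)+\mang{\bar g,\bar V v(u)}=f(\bar x+\bar U u+\bar V v(u))=f(x)\), which is (i).

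Property (ii) is exactly Assumption \ref{7as:7}. For property (iv) I would establish continuity of \(\partial f\) at \(\bar x\) relative to \(\mathcal{M}\) by treating inner- and outer-semicontinuity separately. Inner-semicontinuity relative to \(\mathcal{M}\) is precisely the conclusion of Theorem \ref{7the:5}; its hypotheses invoke Assumption \ref{7as:4}, but Assumption \ref{7as:7} forces \(\hat\partial f(x)=\partial f(x)\) at points of \(\mathcal{M}\) near \(\bar x\) (subdifferential regularity), so Assumption \ref{7as:4} is met and Theorem \ref{7the:5} applies. Outer-semicontinuity relative to \(\mathcal{M}\) is the general closedness of the limiting subdifferential under \(f\)-attentive convergence: if \(x^k\overset{\mathcal{M}}{\rightarrow}\bar x\) then \(f(x^k)\to f(\bar x)\) because \(f|_{\mathcal{M}}\) is continuous, so \(\limsup_{x^k\overset{\mathcal{M}}{\rightarrow}\bar x}\partial f(x^k)\subset\partial f(\bar x)\). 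The two inclusions together give the Painlev\'{e}--Kuratowski continuity required in (iv).

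The one genuinely hard input is the inner-semicontinuity statement, and that has already been discharged as Theorem \ref{7the:5}; so the real work here is assembly, and the two points to be careful about are that \(L_\epsilon(\cdot;\bar g_v)\) is \(\mathcal{C}^k\) (and not merely \(\mathcal{C}^1\)) — needed so that the extension \(g\) above is \(\mathcal{C}^k\) — and the small implication that Assumption \ref{7as:7} subsumes Assumption \ref{7as:4}, which is what makes the five listed assumptions suffice even though Theorem \ref{7the:5} was stated with \ref{7as:4} in place of \ref{7as:7}.
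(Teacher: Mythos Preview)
Your proposal is correct and follows essentially the same route as the paper: invoke Lemma \ref{7lem:4} to get the manifold structure and \(T_{\mathcal{M}}(\bar x)=\mathcal{U}\), read off properties (i)--(iii) of Definition \ref{7def:13}, then obtain property (iv) by combining outer-semicontinuity of \(\partial f\) with Theorem \ref{7the:5}, after noting that Assumption \ref{7as:7} implies Assumption \ref{7as:4}. Your explicit construction of the \(\mathcal{C}^k\) extension \(g(x)=L_\epsilon(x_u;\bar g_v)+\mang{\bar g,\bar V x_v}\) for property (i) is a small but welcome elaboration over the paper, which simply cites Lemma \ref{7lem:4}(iv); likewise your remark on \(f\)-attentive convergence for outer-semicontinuity makes explicit what the paper leaves implicit.
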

\begin{proof}
Suppose \(\bar{x}\) is a local minimizer of \(f\) and \(\mathcal{M}\) is a \(\mathcal{C}^k\) fast track of \(f\), then by Definition \ref{7def:10}, 
\(v(u)\in \bigcap\limits_{\bar{g}\in\epri \partial f\mbk{\bar{x}}} W\mbk{u;\bar{g}_v}\) and hence \(v(u)\) is a special case of the \(v\mbk{u;\bar{g}_v}\) in
 Lemma \ref{7lem:4}. 
Furthermore, Lemma \ref{7lem:4} can be applied and \(\mathcal{M}\) is a special case of the \(\mathcal{C}^k\)-smooth manifold in Lemma \ref{7lem:4}(i).   
By Lemma \ref{7lem:4}(iv) \(f|_{\mathcal{M}}\) is of class \(\mathcal{C}^k\) and thus \(f\) satisfies property (i) in Definition \ref{7def:13}. 
Property (ii) is also satisfied because of Assumption \ref{7as:7}. 
Taking orthogonal complements of the two sides in \eqref{7lem:4} gives property (iii) in Definition \ref{7def:13}. 
Next we show property (iv). 
It suffices to show that \(\partial f\) is inner-semicontinuous at \(\bar{x}\) relative to \(\mathcal{M}\) as \(\partial f\) is already outer-semicontinuous at at \(\bar{x}\).
To apply Theorem \ref{7the:5} we only need to verify that Assumption \ref{7as:4} holds, which is immediate from Assumption \ref{7as:7}.
\end{proof}
\bibliographystyle{spmpsci}
\bibliography{F://Academic/JournalArticles/ReferenceDatabase}
\end{document}